\documentclass[a4paper,11pt]{article}
%
\usepackage{geometry}
\usepackage{cite}
\usepackage{graphicx}
\usepackage{amsmath,amssymb,amsthm}
\usepackage{enumerate}
\usepackage{hyperref}
\usepackage[percent]{overpic} 
\usepackage{enumitem}


\usepackage{color}
\geometry{hmargin=1in,vmargin=1in}
\newtheorem{theo}{Theorem}[section]

\newtheorem{cor}[theo]{Corollary}
\newtheorem{prop}[theo]{Proposition}
\newtheorem{rmk}[theo]{Remark}
\newcommand{\eps}{\varepsilon}

\newcommand{\R}{\mathbb{R}}

\renewcommand{\a}{\alpha}
\renewcommand{\L}{\Lambda}

\newcommand{\mc}{\mathcal}

\newcommand{\ka}{\kappa(y)}
\newcommand{\f}{\partial_v f(y,0)}
\newcommand{\g}{\partial_u g(y,0)}
\newcommand{\D}{\operatorname{{\bf D}}}
\newcommand{\Hess}{\operatorname{{\bf H}}}

\newcommand{\pd}[2]{\frac{\partial #1}{\partial #2}}

\allowdisplaybreaks

\newcommand{\bo}[1]{{\bf #1}}

\newcommand{\leps}{\lambda_\varepsilon}
\newcommand{\BRR}{\mathrm{BRR}}

\numberwithin{equation}{section}
\numberwithin{figure}{section}
\numberwithin{table}{section}
\begin{document}
\title{Propagation for KPP bulk-surface systems in a general cylindrical domain}
\author{Beniamin Bogosel\footnote{CMAP, Ecole Polytechnique, UMR CNRS 7641, 91128 Palaiseau, France}, Thomas Giletti\footnote{IECL, University of Lorraine, UMR 7502, B.P. 70239, 54506 Vandoeuvre-l\`{e}s-Nancy Cedex, France}, Andrea Tellini \footnote{Universidad Polit\'{e}cnica de Madrid, ETSIDI, Departamento de Matem\'{a}tica Aplicada a la Ingenier\'{i}a Industrial, Ronda de Valencia 3, 28012 Madrid, Spain}}
\maketitle

\abstract{{In this paper, we investigate propagation phenomena for KPP bulk-surface systems in a cylindrical domain with general section and heterogeneous coefficients. As for the scalar KPP equation, we show that the asymptotic spreading speed of solutions can be computed in terms of the principal eigenvalues of a family of self-adjoint elliptic operators.
		
		Using this characterization, we analyze the dependence of the spreading speed on various parameters, including diffusion rates and the size and shape of the section of the domain. In particular, we provide new theoretical results on several asymptotic regimes like small and high diffusion rates and sections with small and large sizes. These results generalize earlier ones which were available in the radial homogeneous case.
		
		Finally, we numerically investigate the issue of shape optimization of the spreading speed. By computing  its shape derivative, we observe, in the case of homogeneous coefficients, that a disk either maximizes or minimizes the speed, depending on the parameters of the problem, both with or without constraints. We also show the results of numerical shape optimization with non homogeneous coefficients, when the disk is no longer an optimizer.}

\section{Introduction}
\label{section1}

In this work we consider the following system of coupled reaction-diffusion equations
\begin{equation}\label{eq:evol}
\left\{
\begin{array}{ll}
\partial_t v = d \Delta v + f(y,v), & \quad \mbox{ for } t >0, \ (x,y) \in \R \times  \omega, \vspace{3pt}\\
d \, \partial_n v  =   \kappa (y) ( \mu u - \nu v ), & \quad \mbox{ for } t >0, \ (x,y) \in \R \times \partial \omega  ,\vspace{3pt}\\
\partial_t u = D \Delta u + g(y,u) + \kappa (y) (\nu v - \mu u), & \quad \mbox{ for } t >0, \ (x,y) \in \R \times \partial \omega.\vspace{3pt}\\
\end{array}
\right.
\end{equation}
where $\omega$ is an open, bounded, connected set of $\R^N$, $N\geq 1$, and its boundary $\partial \omega$ is assumed to be smooth, in the sense that each connected component of $\partial\omega$ is a smooth hypersurface in $\R^N$. The operator $\Delta$ denotes the Laplacian with respect to the $(x,y)$ variables, and, for simplicity, we still use the same symbol for the Laplace-Beltrami operator on $\R\times\partial\omega$, with the convention that it reduces to $\partial_{xx}$ when $N=1$. The diffusion coefficients $d$ and~$D$ are assumed to be positive constants, while the reaction terms $f$ and $g$ are, respectively, functions of class $C^{1,r}\left(\omega \times [0,+\infty)\right)$ and $C^{1,r}\left(\partial\omega \times [0,+\infty)\right)$, with $0<r<1$, that satisfy the following KPP-type properties:
\begin{equation}\label{ass:KPP}
\left. \begin{array}{c}
\displaystyle f(\cdot , 0 ) \equiv 0 \equiv g (\cdot,0), \vspace{4pt} \\
\displaystyle v \mapsto \frac{f(\cdot, v)}{v} \ \mbox{ and } \ u \mapsto \frac{g (\cdot, u )}{u} \ \ \mbox{ are, respectively, decreasing and nonincreasing,} \vspace{4pt}\\
\displaystyle \exists M>0, \ \forall u,v \geq M , \ \ f (y,v) \leq 0 \ \  \forall y \in \omega \quad \text{and} \quad  g(y,u) \leq 0  \ \ \forall y \in \partial \omega . 
\end{array}
\right.
\end{equation}
In the second equation of \eqref{eq:evol}, $n$ denotes the exterior normal direction to $\R\times\omega$ (observe that its first component is $0$), and, for the coupling term $\kappa(y)\left(\mu u-\nu v\right)$, we assume that $\mu$ and $\nu$ are positive constants, and $\kappa\in C^{1,r}\left(\partial\omega\right)$ is a nonnegative function, not identically equal to~$0$.

Finally, problem \eqref{eq:evol} is complemented with an initial datum $(u_0,v_0)$ which, unless differently specified, will be assumed to be continuous, non-negative, not identically equal to $(0,0)$ and with compact support.

System \eqref{eq:evol} has a biological interpretation which can be better understood in the two dimensional case, i.e., for $N=1$, when $\omega$ is simply a bounded, open interval. In such a case, $u$ and $v$ represent the densities of two parts of a single species which move randomly, the latter in the horizontal strip $\R\times\omega$ (which may be referred to as the \emph{field}) and the former, with a possibly different diffusion rate, on the two lines at the boundary that play the role of two \emph{roads}. Also the reproduction and mortality rates, as well as the interactions between the individuals, described by $f$ and $g$, can be different for the two densities. These aspects, together with the possible dependence of $f$ and $g$ on the transversal variable $y$, represent a first element of heterogeneity in the system. 

A second one is given by the coupling term $\kappa(y)\left(\mu u-\nu v\right)$, which is present both in the differential equation on the boundary and as an inhomogeneous Robin boundary condition for $v$. In biological terms, it represents an exchange between the two densities: a portion $\kappa\mu u$ passes from the boundary to the interior, while a portion $\kappa \nu v$ from the interior to the boundary. In this sense, the exchange is symmetric, but it can vary from spot to spot, as the function $\kappa$ depends on $y$. We point out that the total mass of the population $\int_{\mathbb{R} \times \partial \omega} u + \int_{\mathbb{R} \times \omega} v$ is constant in time when $f = g \equiv 0$, i.e., when no reproduction, mortality nor interactions are taken into account, which is consistent with this interpretation.

Finally, a third ingredient of heterogeneity is indirectly given by the geometry of the domain $\omega$, which will be one of the points that we will address in this work. Nonetheless, we point out that \eqref{eq:evol} is invariant by translations in the $x$-variable, which is the direction in which we will study the propagation of solutions, and this will play an important role in the techniques that we use for our analysis.

We will focus on the qualitative behaviour of the solution of \eqref{eq:evol}, which is initially concentrated in some region of the domain, for large times. Mainly, we will address the questions of whether an invasion of the domain occurs, i.e., the solution converges to a positive steady state, or, instead, the species is driven to extinction. In the former case, in addition, we will study how fast the invasion takes place, through the concept of \emph{asymptotic speed of propagation} in the direction of the axis of the cylinder, which will be made precise in Theorem \ref{th:spreading}.

The specificity of \eqref{eq:evol} is that the two equations are posed in domains with different spatial dimensions (precisely a $(N+1)$-dimensional and a $N$-dimensional manifold), and they are coupled through a flux condition of Robin type on the boundary. Systems of this kind have been introduced by H. Berestycki, J.-M. Roquejoffre and L. Rossi in \cite{BRR_influence_of_a_line, BRR_plus, BRR_shape_of_expansion} as a tool to analyze, from the mathematical point of view, the effect of a transport network on the propagation of biological species. Indeed, in many situations, transport networks accelerate the propagation of species or diseases. Some specific examples are the tiger mosquito (\emph{aedes albopictus}), which is transported by cars along roads, the Asian wasp (\emph{vespa velutina}), which propagates quicker along rivers, and the black plague in the Middle Ages, that spread faster through trade routes, like the silk road. 

With this biological background in mind, the choice of the names \emph{field} and \emph{road(s)}, that we have used above to refer, respectively, to the generalized cylinder $\R\times\omega$ and its boundary $\R\times\partial\omega$, becomes clear. We point that the higher dimensional situation, which may also be referred to as a \emph{volume-surface} or \emph{bulk-surface} reaction-diffusion system, appears in biological models for cell polarization or cell division models~\cite{FLT,MS2018,RR2014}. We will diffusely use this nomenclature in the rest of this work.\\

Let us point out other recent studies on the propagation phenomena in the framework of road-field systems. In the aforementioned papers~\cite{BRR_influence_of_a_line, BRR_plus, BRR_shape_of_expansion}, where a half-plane is considered for the field, and all parameters are homogeneous, H. Berestycki, J.-M. Roquejoffre and L. Rossi have proved the existence of a  threshold for the value of the diffusion coefficient $D$ below which the spreading speed is equal to $2 \sqrt{d f'(0)}$, the spreading speed in the Euclidean space with no road. Thus, for small diffusion rates on the road, this has no effect on the propagation. Instead, if $D$ is above this threshold, the spreading speed is strictly larger than in the case without the road and therefore the road makes the propagation faster. The same authors have also shown the existence of travelling wave solutions in~\cite{BRR_road_fields_travelling_waves}.

One of the present authors has shown that such a diffusion threshold still exist in the case of a standard cylinder, i.e., $\omega$ is a ball, again under the additional assumption that all the equation parameters are homogeneous, and also that there is no reaction on the surface~\cite{T16,RTV}. We send the interested reader to~\cite{T19} for a a review and comparison of those systems.

The arguments in all the aforementioned works relied on a semi-explicit construction of solutions of the linearized system around $(0,0)$, which allow one to construct sub- and supersolutions for the nonlinear problem that give, respectively, lower and upper bounds for the spreading speed. The main goal of this paper is to present a more robust approach, consisting in the characterization of the spreading speed through the principal eigenvalue of a family of elliptic operators. This allows us to recover some results of~\cite{T16,RTV} and to tackle the heterogeneous framework described above, including more general cylindrical domains. Such a principal eigenvalue approach is well-known in the case of a scalar KPP reaction-diffusion equation; let us mention the classical work~\cite{BerestyckiNirenberg-TWcylinders}. In the context of road-field equations, it has been used in~\cite{GMZ} for a planar and spatially periodic case. In parallel to the present work, a notion of a generalized principal eigenvalue has also been introduced in~\cite{BDR19, BDRpreprint}; yet, the goal of these latter works was to study not the spreading properties but the persistence or extinction of the solution in heterogeneous domains.

Finally, we briefly mention that a variety of other road-field systems have been considered in the literature: with fractional diffusion on the road~\cite{BCRR}, ignition type nonlinearity~\cite{Dietrich1,Dietrich2,DiRo}, nonlocal exchange terms~\cite{Pauthier}, and a SIR epidemiological model with an additional compartment of infected that travel on a road with fast diffusion~\cite{BRR_SIR_preprint}. We also mention the forthcoming work~\cite{BRTpreprint} where the large time behaviour of solutions of both road-field systems and systems of parabolic equations on adjacent domains is studied.

\begin{rmk}
We highlight that our study applies to some other situations without difficulty. For instance, one may consider a surface that consists of two or more connected components, or even a mixed situation where some connected components of $\R \times \partial \omega$ involve a surface diffusion equation, while a more classical boundary condition (e.g., Dirichlet, Neumann) is imposed on others. The key assumption that has to be maintained is the invariance by translation in the first variable, which is the direction of the propagation. Nonetheless, we only restrict ourselves to the above described framework, in order to simplify the presentation.\end{rmk}

\paragraph{Plan of the paper and summary of the main results.} Section~\ref{section2} is mainly concerned with the characterization of the spreading speed of the solutions using the principal eigenvalues of a family of self-adjoint elliptic operators. In particular, we present a variational formula for the spreading speed, which relies on a Rayleigh formula for these principal eigenvalues, and which is the basis of our analysis. We refer to Theorem~\ref{th:spreading} and Proposition~\ref{prop:main} for the main results of Section~\ref{section2}. An immediate consequence of this approach will be the monotonicity of the speed with respect to the exchange parameter~$\kappa$. Moreover, we establish some fundamental properties of the principal eigenvalue, that will be the key points for several arguments in this work, both in the theoretical and the numerical parts.

Sections~\ref{section:c^*_D} and~\ref{section:c^*_R} are respectively devoted to the theoretical study of how the spreading speed depends on diffusion rates and the size of the domain. More precisely, Theorem~\ref{th:D_monotonicity} deals with the asymptotic behaviour of the  speed in the slow and high diffusion regimes, and it shows that the surface may either slow down or accelerate the propagation. However, we also show in Section~\ref{sec:D_counter} that, in the non radial case, the spreading speed may not be monotonic with respect to diffusion rates and, hence, there may no longer be a diffusion threshold for acceleration by the surface; this is a new phenomenon which did not appear in earlier studies of road-field systems. Theorems~\ref{th:R0} and~\ref{th:Rinfty} are respectively concerned  with the limits for small and large domain sections. In the former, we prove that the speed converges to the one of an averaged version of the KPP equation on the surface; in the latter, to the one of the road-field problem considered in~\cite{BRR_influence_of_a_line, BRR_plus} with a half-plane or, in higher dimension, in the analogous situation of a half-space.

Finally, in Section~\ref{sec:numeric} we propose a numerical scheme to compute the spreading speed, which we then use in Section~\ref{sec:shape}, to address, in dimension~$N +1 =3$, the issue of finding the shape of the section that optimizes the speed. We consider both the minimization and maximization problem, in either unconstrained domains or under a perimeter or area constraint. Our numerical simulations suggest that in the homogeneous case where functions $f$, $g$ and $\kappa$ are spatially constant, then, depending on the parameters, each problem is either ill-posed, or it admits the disk as an optimal shape, at least among simply connected domains. However, we will also show that other shapes appear in the case of heterogeneous coefficients.

\section{Preliminaries: characterization of the spreading speed}
\label{section2}
Since the coupling in \eqref{eq:evol} is non-standard, we begin with a general result on the existence and uniqueness of solutions for the parabolic system, as well as a result on its stationary states.

\begin{theo}
	\label{th:2.1}
	For any initial datum $(u_0, v_0)$, system \eqref{eq:evol} admits a solution, which is globally defined in time. Moreover, the problem satisfies a comparison principle; thus, in particular, the solution is unique. 
	
	Finally, there exists at most one positive steady state, which we denote by $(U^* (y), V^* (y))$ when it exists.
\end{theo}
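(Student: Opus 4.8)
The plan is to establish the four assertions—existence, global-in-time definition, comparison principle, uniqueness of the solution, and uniqueness of the positive steady state—in the order in which each relies on the previous. The central difficulty, which I would address first, is that the coupling in \eqref{eq:evol} is non-standard: the road equation for $u$ lives on the $N$-dimensional manifold $\R\times\partial\omega$, while the field equation for $v$ lives on the $(N+1)$-dimensional set $\R\times\omega$, and they interact only through the Robin-type flux $d\,\partial_n v=\kappa(y)(\mu u-\nu v)$. Standard parabolic theory for single equations or for systems posed on a common domain does not directly apply, so the first step is to set up a functional-analytic framework in which $(u,v)$ is treated as a single unknown in a product space, and to recast \eqref{eq:evol} as an abstract semilinear parabolic problem. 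The natural choice is a space like $C(\R\times\partial\omega)\times C(\R\times\omega)$ (or weighted $L^\infty$ variants to accommodate the unbounded cylinder), with the linear part generating an analytic semigroup; local existence and uniqueness then follow from a fixed-point argument using the $C^{1,r}$ regularity of $f$, $g$ and $\kappa$.

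\medskip

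Once local well-posedness is in hand, the key tool for everything else is a comparison principle, so I would prove that next. The plan is to show that if $(\o u,\o v)$ and $(\underline u,\underline v)$ are respectively a super- and a subsolution with $\underline u\le\o u$, $\underline v\le\o v$ initially, then the ordering is preserved in time. The delicate point is handling the coupling at the boundary: writing the equations for the differences $w=\o u-\underline u$ and $z=\o v-\underline v$, one must track the signs carefully in the Robin condition $d\,\partial_n z=\kappa(\mu w-\nu z)$ together with the exchange term $\kappa(\nu z-\mu w)$ in the $w$-equation. A standard approach is to argue by contradiction at a first touching point, or equivalently to use a maximum-principle argument adapted to the coupled structure; the nonnegativity of $\kappa$ and the positivity of $\mu,\nu$ are precisely what make the boundary terms cooperate. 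The KPP-type monotonicity of $f(\cdot,v)/v$ and $g(\cdot,u)/u$, together with $f(\cdot,0)=g(\cdot,0)=0$, provides the Lipschitz control on the reaction terms needed to close the argument. Uniqueness of the solution is then an immediate corollary, by applying comparison in both directions.

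\medskip

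Global existence in time would follow from the comparison principle combined with an a priori bound: the last line of \eqref{ass:KPP} furnishes a constant supersolution. Indeed, for $M$ large enough, the pair $(\max\{M,\|u_0\|_\infty,\ldots\},\;\max\{M,\|v_0\|_\infty,\ldots\})$—or a suitable large constant dominating the initial data—is a supersolution, since $f(y,v)\le0$ and $g(y,u)\le0$ for $u,v\ge M$, and the exchange term vanishes on constants chosen proportionally to $\mu,\nu$ or is otherwise controlled. This yields a uniform $L^\infty$ bound that prevents finite-time blow-up, so the local solution extends to all $t>0$. Nonnegativity of the solution follows similarly by comparison with the subsolution $(0,0)$, using $f(\cdot,0)=g(\cdot,0)=0$.

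\medskip

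For the final assertion—uniqueness of the positive steady state—I would invoke a sliding or sweeping argument that exploits the concavity encoded in the KPP hypotheses. The standard strategy is to suppose two positive steady states $(U_1^*,V_1^*)$ and $(U_2^*,V_2^*)$ exist and consider the quantity $\tau^*=\inf\{\tau>0:\tau(U_1^*,V_1^*)\ge(U_2^*,V_2^*)\}$. Using the strict monotonicity of $v\mapsto f(\cdot,v)/v$ (and nonincreasing behaviour for $g$), one shows that $\tau^*\le1$, and symmetrically the reverse, forcing the two steady states to coincide. The main obstacle here is that the argument couples two equations of different dimensions through the boundary, so the comparison at the infimum $\tau^*$ must again treat the Robin flux and the exchange terms simultaneously; the strong maximum principle for the coupled system, applied at a contact point either in the interior of $\omega$, on $\partial\omega$, or in the field, is what rules out $\tau^*>1$ while preventing the two solutions from touching without being identical. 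The strictness in the KPP condition for $f$ is what provides the needed strict inequality to conclude, and is the reason the statement asserts \emph{at most one} positive steady state rather than relying on the weaker ``nonincreasing'' hypothesis for $g$ alone.
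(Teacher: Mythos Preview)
Your proposal is correct and matches the approach the paper has in mind: the paper itself does not give a proof but refers to \cite{BRR_influence_of_a_line,BRR_plus,BDRpreprint}, noting only that existence holds under general assumptions and that uniqueness of the positive steady state is a direct consequence of the KPP hypothesis~\eqref{ass:KPP}. Your outline---abstract semilinear framework in a product space, comparison via a first-touching/maximum-principle argument exploiting the cooperative coupling, global existence from a large constant supersolution furnished by the last line of~\eqref{ass:KPP}, and a sliding argument on $\tau^*$ for steady-state uniqueness---is precisely the scheme carried out in those references, so there is nothing to add.
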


 We do not provide the proof of this result, since it can be obtained by following the same arguments used in~\cite{BRR_influence_of_a_line,BRR_plus,BDRpreprint}. We just mention that the existence part holds true under very general assumptions, while the uniqueness of the positive steady state is a direct consequence of the KPP hypothesis~\eqref{ass:KPP}. Uniqueness actually also holds true if one assumes, in place of the second relation of~\eqref{ass:KPP} that $v \mapsto \frac{f(\cdot ,v)}{v}$ is nonincreasing and $u \mapsto \frac{g (\cdot, u) }{u}$ is decreasing.

Moreover, we point out that the steady state $(U^*,V^*)$ only depends on the cross variable~$y$, which follows from its uniqueness together with the invariance of~\eqref{eq:evol} by translations in the $x$-direction.
 
 We now introduce the following principal eigenvalue problem which will be the main tool of our subsequent analysis:
 \begin{equation}\label{eq:eigen}
 \left\{
 \begin{array}{rcll}
 -D \Delta U - \left(D\alpha^2 + \partial_u g(y,0) - \kappa (y) \mu\right) U - \kappa (y) \nu V & = & \Lambda (\alpha ) U, & \quad \mbox{ on } \partial \omega,\vspace{3pt}\\
 -d \Delta V - \left(d\alpha^2 + \partial_v f(y,0) \right) V  & = & \Lambda (\alpha) V, & \quad \mbox{ in }  \omega,\vspace{3pt}\\
 d \, \partial_n V -\kappa(y) \left( \mu U - \nu V \right) & =& 0 , & \quad \mbox{ on } \partial \omega, \vspace{3pt}\\
U >0 \mbox{ on } \partial \omega \mbox{ and } V >0 \mbox{ in } \omega .
 \end{array}
 \right.
 \end{equation}
 Observe now that all the functions and, thus, the Laplacian, only involve the $y$-variable. The following result addresses the question of existence of a solution to this eigenvalue problem.
  \begin{theo}\label{theo_eigen}
 	For every $\a\in\R$, the eigenvalue problem \eqref{eq:eigen} admits a unique eigenvalue $\Lambda (\alpha)$ associated with an eigenfunction pair $(U_\alpha,V_\alpha)$. Moreover, the eigenvalue $\L (\a)$ is simple, i.e., the eigenfunction is unique up to multiplication by a positive factor.
 \end{theo}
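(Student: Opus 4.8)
The plan is to recast \eqref{eq:eigen} as a variational problem for a self-adjoint operator, so that $\L(\a)$ arises as the minimum of a Rayleigh quotient, and then to extract positivity and simplicity from a maximum-principle argument. The starting observation is that, although \eqref{eq:eigen} is not symmetric for the plain $L^2$ pairing (the constants $\mu$ and $\nu$ enter asymmetrically), it becomes self-adjoint after a suitable weighting. Testing the surface equation against $\phi$ on the closed manifold $\partial\omega$, testing the bulk equation against $\psi$ and integrating by parts with the flux condition, and finally multiplying the two identities by $\mu$ and $\nu$ respectively, one is led to the symmetric bilinear form
\begin{equation*}
\begin{aligned}
\mc{B}\big((U,V),(\phi,\psi)\big)
&= \mu D \int_{\partial\omega}\nabla U\cdot\nabla\phi + \nu d\int_\omega \nabla V\cdot\nabla\psi \\
&\quad - \mu\int_{\partial\omega}(D\a^2+\g)U\phi - \nu\int_\omega(d\a^2+\f)V\psi + \int_{\partial\omega}\ka\,(\mu U-\nu V)(\mu\phi-\nu\psi),
\end{aligned}
\end{equation*}
paired with the weighted inner product $\langle(U,V),(\phi,\psi)\rangle_w = \mu\int_{\partial\omega}U\phi + \nu\int_\omega V\psi$. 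The decisive feature is that the coupling contributes to the quadratic form the manifestly nonnegative term $\int_{\partial\omega}\ka\,(\mu U-\nu V)^2$, which is precisely what a semi-bounded self-adjoint structure needs. I would work in $\mc{H}=H^1(\partial\omega)\times H^1(\omega)$, the trace theorem giving meaning to the boundary integrals involving $V$.

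Next I would check that $\mc{B}$ is bounded on $\mc{H}$ and, after adding a large multiple of $\langle\cdot,\cdot\rangle_w$, coercive: the gradient terms provide full $H^1$ control, the zeroth-order terms are dominated since $\f$, $\g$ and $\ka$ are bounded, and the trace term is controlled by a trace inequality (and being nonnegative, is harmless for coercivity). Lax--Milgram together with the compactness of the embeddings $H^1(\partial\omega)\hookrightarrow L^2(\partial\omega)$ and $H^1(\omega)\hookrightarrow L^2(\omega)$ then shows that the operator associated with $\mc{B}$ has compact resolvent. Its spectrum is therefore real, discrete and bounded below, and the least eigenvalue $\L(\a)$ equals the minimum of $\mc{B}((U,V),(U,V))/\langle(U,V),(U,V)\rangle_w$. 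Elliptic regularity, using the $C^{1,r}$ regularity of the coefficients, upgrades any minimizer into a classical solution of \eqref{eq:eigen}.

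For the sign and simplicity I would first run the absolute-value trick: replacing $(U,V)$ by $(|U|,|V|)$ leaves the denominator, the gradient terms and the zeroth-order terms unchanged, while it does not increase the coupling term, since $\mu\nu>0$ gives $(\mu|U|-\nu|V|)^2\le(\mu U-\nu V)^2$. Hence a nonnegative minimizer exists, and it is a nonnegative eigenfunction for $\L(\a)$. The key step is then to promote nonnegativity to strict positivity. Since $\ka\not\equiv 0$, the system is irreducible, and the strong maximum principle together with the Hopf lemma --- applied in turn to the bulk equation, to the flux condition, and to the surface equation on the closed manifold --- force $V>0$ in $\omega$ and $U>0$ on $\partial\omega$; in particular neither component may vanish identically without the other doing so. Simplicity follows by the classical argument: given two eigenfunctions for $\L(\a)$, both may be taken strictly positive, and a suitable linear combination that is nonnegative but touches zero would contradict the strong maximum principle, forcing proportionality. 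Uniqueness of the eigenvalue admitting a positive eigenfunction is finally read off from the self-adjoint structure, since a positive eigenfunction for any other eigenvalue would have to be $\langle\cdot,\cdot\rangle_w$-orthogonal to the positive principal one, which is impossible.

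I expect the main obstacle to be the positivity step for the coupled bulk--surface system rather than the variational setup. Unlike the scalar case, one cannot invoke a single strong maximum principle: positivity has to be transported from the interior $\omega$ to the closed manifold $\partial\omega$ through the Robin-type flux condition and the zeroth-order coupling $\ka\,(\mu U-\nu V)$, and back again. Verifying this propagation --- equivalently, the irreducibility of the operator --- and checking that the Hopf lemma applies at the coupling interface is the technical crux on which both the strict positivity and the simplicity arguments rest.
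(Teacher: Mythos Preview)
Your proposal is correct, but it takes a somewhat different route from the paper. The paper does not prove Theorem~\ref{theo_eigen} variationally: it invokes the Krein--Rutman theorem, applied to the solution operator of the underlying elliptic system, and refers to~\cite{GMZ} for the details. Only afterwards, in Proposition~\ref{prop:main}, does the paper symmetrize the problem --- via the change of variables $(U,V)\mapsto(\sqrt{\mu}\,U,\sqrt{\nu}\,V)$, which is exactly equivalent to your weighted inner product $\langle\cdot,\cdot\rangle_w$ --- and derive the Rayleigh formula as a consequence of the already-established existence and simplicity.

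Your approach reverses the order: you symmetrize first, build $\L(\a)$ directly as the minimum of the Rayleigh quotient, and then run the absolute-value trick plus a coupled strong maximum principle/Hopf argument to obtain positivity and simplicity. This is perfectly legitimate and has the advantage of delivering the variational characterization (the paper's Proposition~\ref{prop:main}) in the same stroke, without a separate argument. The Krein--Rutman route, on the other hand, does not require self-adjointness and would apply equally well to non-symmetrizable cooperative couplings; it also packages the positivity and simplicity into a single abstract statement once strong positivity of the resolvent is checked. In practice the technical core is the same in both approaches: one must verify that positivity propagates across the bulk--surface interface through the Robin flux condition and the zeroth-order coupling, i.e., the irreducibility you flag as the main obstacle. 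That verification is precisely what underlies strong positivity of the resolvent in the Krein--Rutman argument, so the two proofs meet at this step.
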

\noindent Indeed, since we require $U_\a$ and $V_\a$ to be positive, $\Lambda (\alpha)$ is the principal eigenvalue of the operator which to a couple $(U, V)$ associates the solution of the underlying elliptic system. Therefore, the previous result follows from a standard approach and Krein-Rutman theorem. We omit the proof and simply refer to T. Giletti, L. Monsaingeon and M. Zhou~\cite{GMZ} for the details (the problem treated there is slightly different, but the proof of Theorem~\ref{theo_eigen} can be obtained through straightforward changes).\\

Let us now briefly sketch the relation between positive solutions of~\eqref{eq:eigen} and problem~\eqref{eq:evol}. As for the classical KPP equation (see, e.g., \cite{AW78}), assumption \eqref{ass:KPP} implies that the dynamics of \eqref{eq:evol} can be characterized through its linearization at $(0,0)$. It is easy to see that positive solutions of such a linearization of the form
 \begin{equation}
 \label{eq:supersol}
 (u(t,x,y),v(t,x,y)) =  e^{-\alpha (x-ct)}\left(U (y), V (y)\right),
 \end{equation}
  with $\alpha, c \in \R$ having the same sign, and $U:\partial \omega \mapsto \R$ and $V:\omega \mapsto \R$ some positive functions, exist if and only if $(U,V)$ is a principal eigenfunction of \eqref{eq:eigen}, and $c \a=-\L (\a)$, i.e., $c=\frac{-\L (\a)}{\a}$.
  
  The comparison principle guarantees that positive solutions of the linearized problem of the form \eqref{eq:supersol} can be used to bound from above the asymptotic speed of propagation of~\eqref{eq:evol} in the $x$-direction by the quantity $c$. Thus, one want to look for the minimum value of $c$ for which such solutions exist, in order to get the best possible upper bound for the speed of propagation.
  
  Actually, the following result, which is the fundamental result for the spreading of solutions of \eqref{eq:evol}, guarantees that such a minimum provides us exactly with the asymptotic speed of propagation. 
 \begin{theo}\label{th:spreading}
 	Assume that $\Lambda (0) <0$, and define 
 	\begin{equation}
 	c^* := \min_{\alpha >0} \frac{-\Lambda (\alpha)}{\alpha} >0 .
 	\label{spreading-speed}
 	\end{equation}
 	Then the positive steady state $(U^* (y), V^* (y))$ exists, and solutions of \eqref{eq:evol} with compactly supported initial data spread in the $x$-direction with speed $c^*>0$, in the sense that:
 	$$\forall\, 0 < c < c^*, \quad \lim_{t \to +\infty} \sup_{|x| \leq c t}  \left[ \sup_{y \in \partial \omega} \left|u(t,x,y) - U^* (y)\right| + \sup_{y \in \omega} \left|v (t,x,y) - V^* (y)\right| \right] =0,$$
 	$$\forall\, c>  c^*, \quad \lim_{t \to +\infty} \sup_{|x| \geq c t}  \left[ \sup_{y \in \partial \omega} |u(t,x,y) | + \sup_{y \in \omega} |v (t,x,y) | \right] =0.$$
 	Conversely, if $\Lambda (0) \geq 0$ then the solution $(u(t,x,y),v(t,x,y))$ converges, uniformly in space, to $(0,0)$ as $t\to +\infty$. In such a case, we define, by convention, $c^*=0$.
 \end{theo}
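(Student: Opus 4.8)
The plan is to exploit the comparison principle from Theorem~\ref{th:2.1} together with the principal eigenpairs of Theorem~\ref{theo_eigen} to trap the solution between sub- and supersolutions built from \eqref{eq:supersol}, and to reduce the behaviour behind the front to the dynamics of the $x$-independent (``cross-sectional'') system. I would organize the argument into four blocks: the upper bound on the speed, the existence and global attractivity of the steady state in the cross-section, the lower bound, and finally the extinction regime.

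First, the upper bound. For $c>c^*$ I would pick $\alpha>0$ with $-\Lambda(\alpha)/\alpha<c$; by the KPP concavity in the second line of \eqref{ass:KPP} one has $f(y,v)\le \partial_v f(y,0)\,v$ and $g(y,u)\le \partial_u g(y,0)\,u$, so the linear profile \eqref{eq:supersol} with $c_\alpha=-\Lambda(\alpha)/\alpha$ is a supersolution of the full nonlinear system. Scaling it by a large constant so that it dominates the compactly supported datum at $t=0$, and superposing the left- and right-moving copies $A\,e^{-\alpha(\pm x-c_\alpha t)}(U_\alpha,V_\alpha)$, yields a supersolution that tends to $0$ on $\{|x|\ge ct\}$ because on that set the exponent of the dominating copy is at most $-\alpha(c-c_\alpha)t\to-\infty$; comparison then gives the second limit of the theorem.

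Next I would treat the cross-sectional system $\partial_tV=d\Delta_yV+f(y,V)$ in $\omega$, $\partial_tU=D\Delta_yU+g(y,U)+\kappa(\nu V-\mu U)$ on $\partial\omega$, with flux $d\,\partial_nV=\kappa(\mu U-\nu V)$. This system is cooperative, hence enjoys its own comparison principle, and $\Lambda(0)$ is precisely the principal eigenvalue governing the stability of $(0,0)$. When $\Lambda(0)<0$, a small multiple $\eps(U_0,V_0)$ of the $\alpha=0$ eigenfunction is a stationary subsolution and a large constant is a supersolution (third line of \eqref{ass:KPP}), so the monotone iteration produces the positive steady state, which is unique by Theorem~\ref{th:2.1} and globally attractive for nontrivial nonnegative data; when $\Lambda(0)\ge0$ the only nonnegative equilibrium is $(0,0)$ and every solution of the cross-sectional system converges to it, the delicate critical case $\Lambda(0)=0$ being closed using the strict monotonicity of $v\mapsto f(\cdot,v)/v$. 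This block settles the existence of $(U^*,V^*)$ and, since any $x$-independent solution of the cross-sectional problem is also a solution of \eqref{eq:evol}, settles the extinction statement: bounding $(u_0,v_0)$ from above by an $x$-independent pair dominating it and applying comparison forces $u,v\to0$ uniformly when $\Lambda(0)\ge0$.

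The lower bound is the crux. For $0<c<c^*$ one has $\Lambda(\alpha)+c\alpha<0$ for every $\alpha>0$, so the generalized principal eigenvalue of the linearized operator written in the moving frame $z=x-ct$ is negative on the whole cylinder. The plan is to transfer this instability to a compactly supported object: I would consider the Dirichlet principal eigenvalue $\lambda_{c,R}$ of the moving-frame operator on the truncated cylinder $(-R,R)\times\omega$ (keeping the bulk-surface coupling on the lateral boundary) and show that $\lambda_{c,R}\to\inf_{\alpha>0}(\Lambda(\alpha)+c\alpha)<0$ as $R\to\infty$, so that $\lambda_{c,R}<0$ for $R$ large. The corresponding positive eigenfunction, truncated and taken with small amplitude so that the KPP bound makes the nonlinear terms favourable, provides a compactly supported subsolution that is stationary in the moving frame; sliding it at speed $c$ and using that the coupling makes the solution instantaneously positive throughout the cylinder (strong maximum principle) forces $\liminf$ of $(u,v)$ to stay above a positive constant on $\{|x|\le ct\}$. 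Letting $c\uparrow c^*$ gives spreading at speed at least $c^*$. To upgrade this from ``bounded below'' to the exact limit $(U^*,V^*)$, I would combine interior parabolic estimates with the global attractivity established above, via a squeezing argument between the subsolution and the constant supersolution, both of whose cross-sectional flows converge to $(U^*,V^*)$. The step I expect to be the main obstacle is exactly this moving-frame construction: proving $\lambda_{c,R}\to\inf_\alpha(\Lambda(\alpha)+c\alpha)$ and building a genuine subsolution require handling the nonstandard Robin/surface coupling carefully, since the eigenfunctions $(U_\alpha,V_\alpha)$ vary with $\alpha$ and the boundary condition mixes bulk and surface unknowns; the closing of the critical case $\Lambda(0)=0$ and the passage from invasion to the precise profile are the other points demanding genuine care.
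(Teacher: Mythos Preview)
Your outline is sound and the upper bound, the cross-sectional Liouville analysis, and the extinction case are exactly what one would do (and coincide with what the paper sketches). The paper itself does not write out a proof; it refers to~\cite{GMZ} and only records the key idea for the lower bound, which is where your plan diverges.

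For $c<c^*$ the paper's route (following~\cite{GMZ}) is not a Dirichlet truncation argument but an analytic continuation: since the operator in~\eqref{eq:eigen} depends analytically on~$\alpha$ and the principal eigenvalue is simple (this is the same Kato argument already invoked in Proposition~\ref{prop:concavity}), $\alpha\mapsto\Lambda(\alpha)$ and the eigenpair extend holomorphically to a complex neighbourhood of the real axis. One then finds, for each $c<c^*$, a complex root $\alpha=a+ib$ with $b\neq0$ of $\Lambda(\alpha)+c\alpha=0$, and the real part of $e^{-\alpha(x-ct)}(U_\alpha,V_\alpha)$ is an oscillating solution of the linearized system; truncating it on a half-period where it is positive and taking a small multiple yields a compactly supported subsolution of the nonlinear problem in the moving frame. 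This is more explicit than your proposal and sidesteps precisely the point you flag as the main obstacle, namely proving the convergence $\lambda_{c,R}\to\inf_{\alpha>0}\bigl(\Lambda(\alpha)+c\alpha\bigr)$ with the mixed bulk--surface coupling on the lateral boundary. Your truncation approach can certainly be made to work (it is the natural substitute when analyticity is unavailable, e.g.\ for merely measurable coefficients), but here the analytic dependence on~$\alpha$ is already established, so the complex-$\alpha$ construction is both shorter and avoids the delicate limit of Dirichlet problems on truncated cylinders with nonstandard boundary conditions.
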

 We point out that the existence and positivity of the quantity $c^*$ defined in \eqref{spreading-speed} will follow from some concavity properties related to $\L (\a)$ that we prove in Proposition \ref{prop:concavity}.
 
 Moreover, we emphasize that the sign of $\L (0)$ allows us to establish whether invasion or extinction occurs for \eqref{eq:evol}. This is also in line with classical results on propagation for the KPP equation and, in the context of road-field systems, it had already been obtained in~\cite{GMZ, BDR19}.
 
Although the proof of Theorem \ref{th:spreading} is not trivial, it largely follows the lines of~\cite{GMZ}. The main idea is that the linearized system still admits solutions of the type~\eqref{eq:supersol} when $c< c^*$, but with imaginary $\alpha$'s. These can be used to construct compactly supported subsolutions of~\eqref{eq:evol}. We also point out that the spreading speed of exponentially decaying initial data may be computed, using the same family of exponential ans\"{a}tze and a similar sub- and supersolution approach. We refer to~\cite[Proposition~1.4]{GMZ} for a related result in spatially periodic case.

 Theorem \ref{th:spreading} leads us to further investigate the principal eigenvalue $\Lambda (\alpha)$. First of all, we observe that, if $\left(U_1,V_1\right)$ is a positive eigenfunction of \eqref{eq:eigen}, then $\left(U_2,V_2\right)=\left(\sqrt{\mu}U_1,\sqrt{\nu}V_1\right)$ is a solution of the following problem:
  \begin{equation}
 \left\{
 \begin{array}{rcll}
 -D \Delta U - \left(D\alpha^2 +  \partial_u g (y,0) - \kappa (y) \mu\right) U - \kappa (y) \sqrt{\mu \nu} V & = & \Lambda (\alpha) U, & \quad \mbox{ on } \partial \omega,\vspace{3pt}\\
 -d \Delta V - \left(d\alpha^2 + \partial_v f (y,0) \right) V  & = & \Lambda (\alpha) V, & \quad \mbox{ in }  \omega,\vspace{3pt}\\
 d \, \partial_n V-\kappa(y) \left( \sqrt{\mu \nu } U - \nu V \right) & =& 0  , & \quad \mbox{ on } \partial \omega, \vspace{3pt}\\
U >0 \mbox{ on } \partial \omega \mbox{ and } V >0 \mbox{ in } \omega .
 \end{array}
 \right.
 \label{coupled-Eig-Pb}
 \end{equation}
 Since \eqref{coupled-Eig-Pb} has a similar structure to \eqref{eq:eigen}, an analogue of Theorem \ref{theo_eigen} holds also for this eigenvalue problem. Hence, the principal eigenvalue for both problems is the same.
 
 The advantage of considering \eqref{coupled-Eig-Pb} becomes clear if we write the equivalent weak formulations associated with both the surface and the bulk equations and add them, obtaining
 \begin{equation}
 	\label{eq:weak_eig}
 	A(\a)\left(W,\Psi\right)=\L (\a)B\left(W,\Psi\right) \qquad \text{for all } \Psi\in \mathcal{T},
 	\end{equation}
 where the test space is $\mathcal{T}=H^1(\partial\omega)\times H^1(\omega)$, and we have set $W=(U,V)$, $\Psi=(\varphi,\psi)$,
 \begin{equation}
 	\label{eq:bilA}
 	\begin{aligned}
 	A(\a)\left(W,\Psi\right)&:=D\int_{\partial \omega} \nabla U \cdot \nabla \varphi - \int_{\partial \omega} \left(D\alpha^2 +  \partial_u g (y,0) - \kappa (y) \mu\right)  U \varphi - \int_{\partial \omega} \kappa (y) \sqrt{\mu \nu} V \varphi \\
 	& \quad + d\int_\omega \nabla V \cdot  \nabla \psi -  \int_{\omega}(d\alpha^2+\f)  V \psi - \int_{\partial \omega} \ka \left( \sqrt{\mu \nu } U - \nu V \right)\psi,  
 	\end{aligned}
 	\end{equation}
 and
 \begin{equation}
 	\label{eq:bilB}
 	B\left(W,\Psi\right):=\int_{\partial \omega} U \varphi +\int_\omega V \psi   
\end{equation}
(observe that, in the integrals on $\partial\omega$ appearing in \eqref{eq:bilA}, $V$ and $\psi$ are meant in the sense of traces). Indeed, the bilinear form $A(\a)$ is self-adjoint, which is not the case for the weak formulation obtained from problem \eqref{eq:eigen}. Thus, as in the classical theory, a Rayleigh variational formula for the principal eigenvalue $\L (\a)$ can be obtained.  Such a formula is presented in the next proposition. We mention that a similar formula has been obtained in \cite{BDR19} in a different context. 
 \begin{prop}\label{prop:main}
 	The principal eigenvalue $\Lambda (\alpha)$ of problem \eqref{coupled-Eig-Pb} satisfies
 	\begin{equation}
 	\label{eq:Rayleigh} \Lambda (\alpha)=\min_{\substack{(U,V)\in \mathcal{T} \\(U,V)\neq(0,0)}}  \, \frac{\mathcal{F} (\alpha) [ U,V]}{\int_{\partial \omega} U^2 + \int_{\omega} V^2}=\min_{(U,V)\in\mc{A}}  \, \mathcal{F} (\alpha) [U,V], 
 	\end{equation}
 	where $\mathcal{F} (\alpha) [U,V]:=A(\a)((U,V),(U,V))$, i.e., from \eqref{eq:bilA},
 	\begin{equation}
 		\label{eq:F}
 		\begin{aligned}
 			\mathcal{F} (\alpha) [U,V] & = D \int_{\partial \omega} |\nabla U|^2 -  \int_{\partial \omega} \left(D\alpha^2+\partial_u g (y,0)\right)U^2  \\ 	 
 			& \quad +  d \int_{\omega} |\nabla V|^2 -  \int_{ \omega} \left(d\alpha^2 +  \partial_v f (y,0)\right)V^2 +  \int_{\partial \omega} \kappa (y) \left(\sqrt{\mu }U-\sqrt{\nu}V\right)^2  , 
 		\end{aligned}
 	\end{equation}
 	and 
 	$$\mathcal{A}:=\left\{ (U,V) \in H^1 (\partial \omega) \times H^1 (\omega) :  \int_{\partial \omega} U^2 + \int_{\omega} V^2 = 1  \right\}.$$
 	The minimum in \eqref{eq:Rayleigh} is uniquely achieved when $(U,V)$ is a principal eigenfunction of \eqref{coupled-Eig-Pb}.
 \end{prop}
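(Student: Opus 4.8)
\section*{Proof proposal}

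The plan is to establish the Rayleigh formula \eqref{eq:Rayleigh} by the direct method in the calculus of variations, and then to identify the minimizer with the principal eigenfunction of \eqref{coupled-Eig-Pb}. First I would observe that the two minima in \eqref{eq:Rayleigh} coincide: the Rayleigh quotient $\mathcal{F}(\alpha)[U,V]/(\int_{\partial\omega}U^2+\int_\omega V^2)$ is invariant under the scaling $(U,V)\mapsto t(U,V)$, so minimizing it over $\mathcal{T}\setminus\{(0,0)\}$ is the same as minimizing $\mathcal{F}(\alpha)$ over the $L^2$-sphere $\mathcal{A}$. A direct computation, namely expanding $A(\alpha)((U,V),(U,V))$ from \eqref{eq:bilA} and completing the square in the three $\kappa$-terms, confirms that $\mathcal{F}(\alpha)$ has precisely the form \eqref{eq:F}; in particular the coupling contributes the \emph{nonnegative} term $\int_{\partial\omega}\kappa(y)(\sqrt\mu\,U-\sqrt\nu\,V)^2$.

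Second, I would show that $m:=\inf_{\mathcal{A}}\mathcal{F}(\alpha)$ is finite and attained. Since $\partial_u g(\cdot,0)$ and $\partial_v f(\cdot,0)$ are continuous on the compact sets $\partial\omega$ and $\overline{\omega}$, hence bounded, and since the $\kappa$-term is nonnegative, one gets on $\mathcal{A}$ the lower bound $\mathcal{F}(\alpha)[U,V]\ge D\|\nabla U\|_{L^2(\partial\omega)}^2+d\|\nabla V\|_{L^2(\omega)}^2-C$, with $C$ depending on $\alpha$, $\sup|\partial_u g(\cdot,0)|$ and $\sup|\partial_v f(\cdot,0)|$. Thus $m>-\infty$, and any minimizing sequence $(U_n,V_n)\in\mathcal{A}$ is bounded in $H^1(\partial\omega)\times H^1(\omega)$. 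Extracting a weakly convergent subsequence $(U_n,V_n)\rightharpoonup(U_*,V_*)$, I would invoke the compact embeddings $H^1(\partial\omega)\hookrightarrow L^2(\partial\omega)$ and $H^1(\omega)\hookrightarrow L^2(\omega)$, together with the compactness of the trace operator $H^1(\omega)\to L^2(\partial\omega)$, so that strong $L^2$-convergence makes the zeroth-order and coupling terms pass to the limit, while the gradient terms are weakly lower semicontinuous. This gives $(U_*,V_*)\in\mathcal{A}$ and $\mathcal{F}(\alpha)[U_*,V_*]\le\liminf\mathcal{F}(\alpha)[U_n,V_n]=m$, so $(U_*,V_*)$ is a minimizer.

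Third, since $A(\alpha)$ is symmetric, the first variation of the Rayleigh quotient at the constrained minimizer yields exactly the weak formulation \eqref{eq:weak_eig} with eigenvalue equal to the Lagrange multiplier, which by testing against $(U_*,V_*)$ equals $m$; elliptic regularity then promotes $(U_*,V_*)$ to a classical solution of \eqref{coupled-Eig-Pb} with $\Lambda=m$. To identify $m$ with the principal eigenvalue, the key observation is that replacing $(U_*,V_*)$ by $(|U_*|,|V_*|)$ cannot increase $\mathcal{F}(\alpha)$: indeed $|\nabla|U_*||=|\nabla U_*|$ a.e., the quadratic terms are unchanged, and $(\sqrt\mu\,|U_*|-\sqrt\nu\,|V_*|)^2\le(\sqrt\mu\,U_*-\sqrt\nu\,V_*)^2$ pointwise because $|U_*||V_*|\ge U_*V_*$. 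Hence there is a nonnegative minimizer, and by the strong maximum principle and the Hopf lemma applied to the cooperative bulk-surface system \eqref{coupled-Eig-Pb}---crucially using $\kappa\not\equiv 0$ to rule out the degenerate case $V_*\equiv 0$---this minimizer is strictly positive. It is therefore a principal eigenfunction, so the uniqueness part of Theorem~\ref{theo_eigen} gives $m=\Lambda(\alpha)$. Finally, uniqueness of the minimizer follows because equality in the reflection inequality forces no sign change, so any minimizer produces a positive eigenfunction; simplicity of $\Lambda(\alpha)$ from Theorem~\ref{theo_eigen} makes it a positive multiple of the principal eigenfunction, pinned down by the normalization in $\mathcal{A}$. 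I expect the genuinely delicate step to be this last identification---proving strict positivity and the absence of sign changes across the Robin-type coupling, where the maximum principle must be used for the coupled system rather than for a single scalar equation; the variational existence and the lower semicontinuity are, by contrast, routine.
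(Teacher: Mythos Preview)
Your proof is correct, and in fact the paper does not give a detailed argument for this proposition at all: it simply observes that the bilinear form $A(\alpha)$ is self-adjoint and states that ``as in the classical theory, a Rayleigh variational formula for the principal eigenvalue $\Lambda(\alpha)$ can be obtained,'' referring to~\cite{BDR19} for a similar formula in a different context. Your direct-method argument---coercivity on $\mathcal{A}$, weak lower semicontinuity, the Euler--Lagrange identity, the $(|U|,|V|)$ trick exploiting $|U||V|\ge UV$ in the coupling term, and strict positivity via the strong maximum principle for the cooperative system---is exactly how one fleshes out that classical theory, so your approach is essentially what the paper implicitly has in mind.
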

 This formula will be the main tool for our analysis on the spreading speed~$c^*$ of solutions of \eqref{eq:evol}. More precisely, we will use it to study the influence of diffusion and of the size and shape of the domain on the propagation. However, in order to already highlight its usefulness, we start with the following immediate corollary:
\begin{cor}  \label{cor:c^*_kappa}
Let $\kappa_1 \leq  \kappa_2$ be two nonnegative and nontrivial functions in $C^{1,r} (\partial \omega)$. 

Define $c^*_1$ and $c^*_2$ the spreading speeds of solutions of \eqref{eq:evol}, where $\kappa$ is replaced respectively by $\kappa_1$ and $\kappa_2$. Then $c^*_1  \geq c^*_2.$
\end{cor}
Indeed, one may easily see from \eqref{eq:Rayleigh} and \eqref{eq:F} that $\Lambda(\a)$ depends monotonically on the function $\kappa$, and, together with \eqref{spreading-speed}, one reaches the wanted conclusion. In other words Corollary~\ref{cor:c^*_kappa} insures that, if the porosity between the surface and the bulk is increased, then the invasion is slowed down, or possibly even blocked. Notice that this was not obvious a priori, because one cannot apply a comparison principle to compare solutions of \eqref{eq:evol}, respectively with $\kappa = \kappa_1$ and $\kappa = \kappa_2$.\\

Before we further study the influence of various parameters on the invasion speed, we begin by establishing some properties of the principal eigenvalue $\L (\a)$ in the next subsection. In particular, these properties will guarantee that the spreading speed $c^*$  given by~\eqref{spreading-speed} is well-defined.

 \subsection{Some properties of $\L(\a)$}
In this section we use the Rayleigh formula given in Proposition \ref{prop:main} to obtain several properties on the principal eigenvalue $\L(\a)$. We begin with some sufficient conditions that, in the light of Theorem \ref{th:spreading}, guarantee that the solution of \eqref{eq:evol} invades the domain.

\begin{prop}
	\label{prop:sufficient}
	Assume that
	\begin{equation}
	\label{eq:suff1}
	\nu\int_{\partial\omega}\g+\mu\int_{\omega}\f>0.
	\end{equation}
	Then $\L(0)<0$. In particular, \eqref{eq:suff1} holds true if
	\begin{equation*}
	\label{eq:suff2}
	\g \geq 0, \ \forall y\in\partial\omega, \quad \text{and} \quad  \f>0, \ \forall y\in\omega.
	\end{equation*}
	\end{prop}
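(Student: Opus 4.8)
The plan is to use the Rayleigh variational formula from Proposition~\ref{prop:main} directly. Since $\L(0)$ is the minimum of the functional $\mathcal{F}(0)[U,V]$ over the constraint set $\mc{A}$, it suffices to exhibit a single admissible pair $(U,V)\in\mc{A}$ for which $\mathcal{F}(0)[U,V]<0$; the minimum will then automatically be negative. The natural candidate is the constant pair, since at $\alpha=0$ the gradient terms in~\eqref{eq:F} vanish on constants and the coupling term $\int_{\partial\omega}\kappa(\sqrt{\mu}U-\sqrt{\nu}V)^2$ can be killed by choosing $U,V$ so that $\sqrt{\mu}U=\sqrt{\nu}V$. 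Concretely, I would take $V\equiv s$ and $U\equiv s\sqrt{\nu/\mu}$ for a suitable normalizing constant $s>0$, so that all three ``bad'' terms (the two Dirichlet energies and the exchange term) drop out of $\mathcal{F}(0)$.

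With this choice, evaluating~\eqref{eq:F} at $\alpha=0$ leaves only
\begin{equation*}
\mathcal{F}(0)[U,V] = -\int_{\partial\omega}\g\,U^2 - \int_{\omega}\f\,V^2 = -s^2\left(\frac{\nu}{\mu}\int_{\partial\omega}\g + \int_{\omega}\f\right),
\end{equation*}
so that $\mathcal{F}(0)[U,V]$ has the opposite sign of $\frac{\nu}{\mu}\int_{\partial\omega}\g+\int_{\omega}\f$, which up to the positive factor $1/\mu$ is exactly the quantity in~\eqref{eq:suff1}. Thus assumption~\eqref{eq:suff1} makes this particular value of $\mathcal{F}(0)$ strictly negative, and by~\eqref{eq:Rayleigh} we conclude $\L(0)=\min_{\mc{A}}\mathcal{F}(0)\leq \mathcal{F}(0)[U,V]<0$, after rescaling $(U,V)$ to lie on $\mc{A}$ (rescaling does not change the sign since $\mathcal{F}(0)$ is quadratic and the constraint denominator is positive).

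The second assertion is then immediate: if $\g\geq 0$ pointwise on $\partial\omega$ and $\f>0$ pointwise on $\omega$, then both integrals $\int_{\partial\omega}\g$ and $\int_{\omega}\f$ are nonnegative, with the second strictly positive (the integrand is positive on a set of full measure and $\omega$ has positive measure); since $\mu,\nu>0$, the combination $\nu\int_{\partial\omega}\g+\mu\int_{\omega}\f$ is strictly positive, which is~\eqref{eq:suff1}.

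There is essentially no serious obstacle here, as the argument is a one-line test-function estimate; the only points requiring a little care are to verify that the constant pair indeed lies in $\mathcal{T}=H^1(\partial\omega)\times H^1(\omega)$ (it does, since $\partial\omega$ and $\omega$ are bounded) and that the trace of the constant $V$ on $\partial\omega$ equals the same constant, so that the exchange term really cancels. I would therefore present the constant-function computation as the whole of the proof, and note in passing that the strictness in the second statement comes from $\f>0$ on the full-dimensional set $\omega$ rather than from the boundary term.
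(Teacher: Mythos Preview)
Your proof is correct and essentially identical to the paper's: the paper simply plugs the constant pair $(U,V)=(\sqrt{\nu},\sqrt{\mu})$ into~\eqref{eq:F} with $\alpha=0$, which is exactly your choice $U=s\sqrt{\nu/\mu}$, $V=s$ with $s=\sqrt{\mu}$, and observes that $\mathcal{F}(0)[\sqrt{\nu},\sqrt{\mu}]=-\nu\int_{\partial\omega}\g-\mu\int_{\omega}\f<0$. Your additional remarks about normalization and the strictness coming from the bulk integral are fine but the paper omits them as obvious.
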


\begin{proof}
	From \eqref{eq:F} and \eqref{eq:suff1}, we have
	\begin{equation*}
	\mathcal{F} (0) [\sqrt{\nu},\sqrt{\mu}]=-\nu\int_{\partial\omega}\g-\mu\int_{\omega}\f<0.
	\end{equation*}
	Thus, Proposition \ref{prop:main} gives the desired result.
\end{proof}

Our second result compares $\L(\a)$ with two other eigenvalues which are related to the problems on the surface and in the bulk, considered separately. As a first step, we introduce such problems. The problem in the bulk, with no diffusion equation on the boundary surface, can be modeled with Neumann boundary conditions, where the boundary $\R\times\partial\omega$ acts as a barrier. It reads:
\begin{equation}\label{eq:evol_field}
\left\{
\begin{array}{ll}
\partial_t v = d \Delta v + f(y,v), & \quad \mbox{ for } t >0 , \ (x,y)\in \R \times  \omega, \vspace{3pt}\\
d \, \partial_n v  =   0, & \quad \mbox{ for } t >0, \ (x,y) \in  \R \times \partial \omega.\vspace{3pt}\\
\end{array}
\right.
\end{equation}
Similar to what we have done for the bulk-surface system, we can introduce the quantity
\begin{equation}
\label{eq:Ff}
\mathcal{F}_{f} (\alpha) [V] =  d \int_{\omega} |\nabla V|^2 -  \int_{ \omega} \left(d\alpha^2 +  \partial_v f (y,0)\right)V^2  ,
\end{equation}
and the spreading speed in the $x$-direction for problem \eqref{eq:evol_field} is given by the formula
$$c^*_f := \min_{\alpha >0} \frac{-\Lambda_f (\alpha)}{\alpha},$$
where 
\begin{equation}
	\label{eq:eig_f}
\Lambda_f (\alpha) := \min_{\substack{V \in H^1 (\omega) \\V\not\equiv 0}} \frac{\mathcal{F}_f (\alpha) [V] }{ \int_{\omega} V^2} = \Lambda_f (0) - d \alpha^2.
\end{equation}
More precisely, $c^*_f$ is well-defined provided that 
\begin{equation}\label{instability_f}
\Lambda_f (0) < 0.
\end{equation}
These results are well-known and we refer to~\cite{BerestyckiNirenberg-TWcylinders} for a proof of them. In the particular homogeneous case where $f (y,v) \equiv f(v) $ does not depend on $y$, then $\Lambda_f (\alpha) = - f '(0) - d\alpha^2$ and one recovers the usual KPP speed $c^*_f = 2 \sqrt{d f'(0)}$.

For later purposes, we also consider the more general problem with Robin boundary conditions:
\begin{equation*}\label{eq:evol_field_Robin}
\left\{
\begin{array}{ll}
\partial_t v = d \Delta v + f(y,v), & \quad \mbox{ for } t >0 , \ (x,y)\in \R \times  \omega, \vspace{3pt}\\
d \, \partial_n v  =  - \kappa(y)\nu v, & \quad \mbox{ for } t >0, \ (x,y) \in  \R \times \partial \omega.\vspace{3pt}\\
\end{array}
\right.
\end{equation*}
whose associated functional and eigenvalue are given, respectively, by
\begin{equation}
\label{eq:Ffk}
\mathcal{F}_{f,\kappa} (\alpha) [V] =  d \int_{\omega} |\nabla V|^2 -  \int_{ \omega} \left(d\alpha^2 +  \partial_v f (y,0)\right)V^2 + \int_{\partial \omega} \kappa(y)\nu V^2  ,
\end{equation}
and
\begin{equation}
\label{eq:eig_fk}
\Lambda_{f,\kappa} (\alpha) := \min_{\substack{V \in H^1 (\omega) \\V\not\equiv 0}} \frac{\mathcal{F}_{f,\kappa} (\alpha) [V] }{ \int_{\omega} V^2} .
\end{equation}
Observe that $\L_{f,0}(\a)$ equals the eigenvalue $\L_{f}(\a)$ defined in \eqref{eq:eig_f}.

Analogously, if we consider propagation on the surface only, we obtain
\begin{equation*}\label{eq:evol_road}
\partial_t u = D \Delta u + g(y,u),  \qquad \mbox{ for } t >0, \ (x,y)\in \R \times \partial \omega, \end{equation*}
and this problem has an asymptotic speed of propagation which can be characterized by means of the principal eigenvalue
\begin{equation}
	\label{eq:eig_g}
	\Lambda_g (\alpha) := \min_{\substack{U \in H^1 (\partial\omega) \\U\not\equiv 0}} \frac{\mathcal{F}_g(\a)[U]}{\int_{\partial\omega} U^2}=\Lambda_g (0) - D \alpha^2,
	\end{equation}
where
$$
\mathcal{F}_g(\a)[U]:= D \int_{\partial\omega} | \nabla U |^2 - \int_{\partial\omega} \left(D \alpha^2 + \partial_u g (y,0) \right) U^2.
$$
Let us insist on the fact that, although the forms of $\mathcal{F}_f$ and $\mathcal{F}_g$ are similar, these two functionals are completely distinct because integrals are taken on different domains.

We are now in position to give a bound for the principal eigenvalue $\L(\a)$.

\begin{prop}\label{prop:ineq1}
	For any $\alpha \geq 0$, the following inequality holds:
	$$\Lambda (\alpha) \geq \min \{ \Lambda_{g} (\alpha), \Lambda_f (\alpha) \}.$$
\end{prop}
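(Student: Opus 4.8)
The plan is to exploit the Rayleigh characterization of Proposition~\ref{prop:main} together with the nonnegativity of the coupling term. First I would observe that, since $\kappa$ is nonnegative, the last integral in the functional $\mc{F}(\a)$ defined in~\eqref{eq:F}, namely $\int_{\partial\omega}\kappa(y)\left(\sqrt{\mu}U-\sqrt{\nu}V\right)^2$, is nonnegative. Dropping it therefore yields, for every $(U,V)\in\mathcal{T}$,
$$
\mc{F}(\a)[U,V]\ \geq\ \mc{F}_g(\a)[U]+\mc{F}_f(\a)[V],
$$
with $\mc{F}_g$ and $\mc{F}_f$ the decoupled functionals appearing in~\eqref{eq:Ff} and just before~\eqref{eq:eig_g}. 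This is the crucial step: it removes the interaction between the surface and bulk components and splits the problem into the two scalar ones.

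Next I would bound each decoupled functional from below using the variational definitions~\eqref{eq:eig_f} and~\eqref{eq:eig_g}. By those characterizations, $\mc{F}_g(\a)[U]\geq\L_g(\a)\int_{\partial\omega}U^2$ and $\mc{F}_f(\a)[V]\geq\L_f(\a)\int_{\omega}V^2$, valid for all $U$ and $V$ (the degenerate cases $U\equiv 0$ or $V\equiv 0$ hold trivially, both sides being $0$). Combining with the previous display gives
$$
\mc{F}(\a)[U,V]\ \geq\ \L_g(\a)\int_{\partial\omega}U^2+\L_f(\a)\int_{\omega}V^2 .
$$

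To conclude I would restrict to the normalized admissible set $\mc{A}$, so that $\int_{\partial\omega}U^2+\int_{\omega}V^2=1$. Writing $a=\int_{\partial\omega}U^2$ and $b=\int_{\omega}V^2$, one has $a,b\geq 0$ and $a+b=1$, hence the right-hand side above is a convex combination $\L_g(\a)\,a+\L_f(\a)\,b$, which is at least $\min\{\L_g(\a),\L_f(\a)\}\,(a+b)=\min\{\L_g(\a),\L_f(\a)\}$. Thus $\mc{F}(\a)[U,V]\geq\min\{\L_g(\a),\L_f(\a)\}$ for every $(U,V)\in\mc{A}$, and taking the minimum over $\mc{A}$ in the Rayleigh formula~\eqref{eq:Rayleigh} yields the claimed inequality $\L(\a)\geq\min\{\L_g(\a),\L_f(\a)\}$.

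This argument is essentially routine and I do not anticipate a genuine obstacle; the only point requiring a little care is the handling of the normalization via the convex-combination estimate, together with the observation that the two variational lower bounds remain valid when one of the components vanishes, so that no positivity of $U$ or $V$ need be assumed in the admissible competitors.
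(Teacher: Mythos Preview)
Your proposal is correct and follows essentially the same approach as the paper: drop the nonnegative coupling term to decouple $\mc{F}(\a)$ into $\mc{F}_g(\a)[U]+\mc{F}_f(\a)[V]$, bound each piece below via the Rayleigh characterizations~\eqref{eq:eig_f} and~\eqref{eq:eig_g}, and then use the normalization in~$\mc{A}$ together with the convex-combination estimate to conclude. Your write-up is slightly more detailed (explicitly handling the cases $U\equiv 0$ or $V\equiv 0$ and spelling out the convex combination), but the argument is the same.
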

\begin{proof}
	Take any $\alpha \geq 0$ and any $(U,V)\in\mc{A}$. Then, from \eqref{eq:F}, \eqref{eq:eig_f} and \eqref{eq:eig_g}, we have
	$$\mathcal{F}(\a)[U,V]\geq\mathcal{F}_g(\a)[U]+\mathcal{F}_f(\a)[V]\geq\L_g(\a)\int_{\partial\omega} U^2+\L_f(\a)\int_{\omega} V^2\geq \min \{ \Lambda_{g} (\alpha), \Lambda_f (\alpha) \}. $$
	Then, from Proposition \ref{prop:main},
	$$\L (\a)=\min_{(U,V)\in\mc{A}}  \, \mathcal{F} (\alpha) [U,V]\geq \min \{ \Lambda_{g} (\alpha), \Lambda_f (\alpha) \}. $$
\end{proof}

The next result is related to some properties of the function $\L (\a)$, its derivatives, and the function $\frac{-\L (\a)}{\a}$. It is the key to prove the existence and uniqueness of the minimum point defining $c^*$ in Theorem \ref{th:spreading}. Moreover, it will be useful for our numerical computations presented in later sections.

\begin{prop}
	\label{prop:concavity}
	\begin{enumerate}[label=(\roman*)]
		\item \label{prop:concavity.i} The function $\a\mapsto\L (\a)$ is continuous, even and concave on $\R$.
		\item \label{prop:concavity.ii} The function $\a\mapsto\L (\a)$ is differentiable and satisfies
		\begin{equation}
		\label{eq:derivL}
		\L '(\a)=-2\a\left(D\int_{\partial \omega} U_\a^2 +d \int_{\omega} V_\a^2\right),
		\end{equation}
		where $\left(U_\a,V_\a\right)$ is the principal eigenfunction of \eqref{coupled-Eig-Pb} normalized so that $\left(U_\a,V_\a\right)\in\mathcal{A}$.
		\item \label{prop:concavity.iii} If $\L (0)<0$, the function $\a\mapsto\frac{-\L (\a)}{\a}$, $\a>0$, is positive and has a unique minimum point.
		\end{enumerate}
	\end{prop}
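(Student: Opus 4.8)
The plan is to exploit the Rayleigh formula~\eqref{eq:Rayleigh} throughout, treating $\L(\a)$ as a pointwise minimum over $\mc{A}$ of the family of functionals $\mathcal{F}(\a)[U,V]$. The crucial observation is that the only $\a$-dependence in~\eqref{eq:F} enters through the single term $-\a^2\bigl(D\int_{\partial\omega}U^2+d\int_\omega V^2\bigr)$; for fixed $(U,V)\in\mc{A}$ the map $\a\mapsto\mathcal{F}(\a)[U,V]$ is therefore an affine function of the variable $\a^2$, with slope $-\bigl(D\int_{\partial\omega}U^2+d\int_\omega V^2\bigr)\le 0$. This single structural fact drives all three parts.

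\medskip
\noindent\textbf{Part \ref{prop:concavity.i}.} Evenness is immediate since $\mathcal{F}(\a)[U,V]=\mathcal{F}(-\a)[U,V]$ (only $\a^2$ appears), so $\L(\a)=\L(-\a)$. For concavity I would argue that each $\a\mapsto\mathcal{F}(\a)[U,V]$ is affine, hence concave, in $\a$ on any interval avoiding $0$... more robustly, $\a\mapsto\mathcal{F}(\a)[U,V]=-D\a^2\int_{\partial\omega}U^2-d\a^2\int_\omega V^2+(\text{const in }\a)$ is a concave function of $\a\in\R$ (a downward parabola), and the pointwise infimum of a family of concave functions is concave. Thus $\L$ is concave on $\R$. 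Continuity then follows from concavity on the open line (finite concave functions are continuous), or directly since $\L$ is a locally uniform infimum of continuous functions and one checks local boundedness; I would note $\L$ is finite because $\mathcal{F}(\a)$ is bounded below on $\mc{A}$ (the gradient and coupling terms are nonnegative and the zeroth-order coefficients are bounded on the compact manifolds $\partial\omega,\omega$).

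\medskip
\noindent\textbf{Part \ref{prop:concavity.ii}.} This is the envelope-theorem step. Since the minimum in~\eqref{eq:Rayleigh} is \emph{uniquely} achieved at the normalized eigenfunction $(U_\a,V_\a)$ (Proposition~\ref{prop:main}), the standard differentiability result for value functions of a minimization with unique minimizer applies: $\L$ is differentiable and $\L'(\a)=\partial_\a\mathcal{F}(\a)[U,V]\big|_{(U_\a,V_\a)}$, the partial derivative in $\a$ evaluated at the optimizer. Differentiating~\eqref{eq:F} in $\a$ gives exactly $-2\a\bigl(D\int_{\partial\omega}U^2+d\int_\omega V^2\bigr)$, and substituting $(U_\a,V_\a)$ yields~\eqref{eq:derivL}. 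Alternatively, and perhaps cleaner to write rigorously, I would sandwich: for $h\neq 0$, testing $\L(\a+h)$ with $(U_\a,V_\a)$ and $\L(\a)$ with $(U_{\a+h},V_{\a+h})$ produces two inequalities whose difference quotients bracket $\L'(\a)$; letting $h\to0$ and using continuity of $\a\mapsto(U_\a,V_\a)$ in $L^2$ (which follows from simplicity of the eigenvalue) squeezes the two sides together to the claimed value. This sandwich also re-proves concavity, since the difference-quotient inequalities show $\L'$ is nonincreasing.

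\medskip
\noindent\textbf{Part \ref{prop:concavity.iii}.} Assume $\L(0)<0$. From~\eqref{eq:derivL} and the normalization $(U_\a,V_\a)\in\mc{A}$, the bracketed factor is bounded below by $\min\{d,D\}>0$ and above by $\max\{d,D\}$, so $\L'(\a)\le -2\a\min\{d,D\}<0$ for $\a>0$; in particular $\L$ is strictly decreasing on $(0,\infty)$ and $\L(\a)\to-\infty$, indeed $\L(\a)\le\L(0)-\min\{d,D\}\,\a^2$ by integrating the slope bound. Now set $\phi(\a):=-\L(\a)/\a$ for $\a>0$. Positivity is clear since $-\L(\a)\ge -\L(0)>0$. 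For uniqueness of the minimizer I would compute $\phi'(\a)=\bigl(-\L'(\a)\a+\L(\a)\bigr)/\a^2$, so critical points solve $g(\a):=-\L'(\a)\a+\L(\a)=0$; differentiating, $g'(\a)=-\L''(\a)\a$, which is $\ge 0$ by concavity (so $g$ is nondecreasing). Since $g(0^+)=\L(0)<0$ and $g(\a)\to+\infty$ (the term $-\L'(\a)\a\sim 2\min\{d,D\}\a^2$ dominates), $g$ has a unique zero, giving a unique critical point of $\phi$ which, by the sign change of $\phi'$, is the global minimum. The one point requiring care is that $\L$ need not be twice differentiable a priori; I expect this to be the main obstacle. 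To handle it I would replace the $\L''$ argument by a direct monotonicity argument: from~\eqref{eq:derivL}, $g(\a)=\L(\a)-\L'(\a)\a$ and, using that $\L'$ is nonincreasing (established in Part~\ref{prop:concavity.ii} via the sandwich), a short convexity-type estimate shows $g$ is strictly increasing on $(0,\infty)$, which suffices for the unique zero. This avoids any appeal to $\L''$ and keeps the argument within the regularity actually proved.
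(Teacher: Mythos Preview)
Your argument for part~\ref{prop:concavity.i} matches the paper's. For part~\ref{prop:concavity.ii} you take a different but valid route: the paper invokes Kato's perturbation theory to obtain analyticity of $\L$ and of $(U_\a,V_\a)$ in~$\a$, then differentiates a Lagrangian $\mathcal{L}(\lambda,\a,W,\Psi)=\lambda+A(\a)(W,\Psi)-\lambda B(W,\Psi)$ at the eigenpair; your envelope/sandwich argument is more elementary and works once $L^2$-continuity of $\a\mapsto(U_\a,V_\a)$ is established, though that step deserves more than a parenthetical remark.

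There is a genuine gap in part~\ref{prop:concavity.iii}. From concavity alone, $\L'$ is only nonincreasing, so $g(\a)=\L(\a)-\a\L'(\a)$ is only \emph{nondecreasing}: for $0<\a_1<\a_2$ one gets $g(\a_2)-g(\a_1)\ge \a_1\bigl(\L'(\a_1)-\L'(\a_2)\bigr)\ge 0$, with equality whenever $\L'$ is constant on $[\a_1,\a_2]$. In that scenario the zero set of $g$ is an interval on which $\phi=-\L/\a$ is constant, and uniqueness of the minimizer fails. Your ``short convexity-type estimate'' does not rule this out; some form of \emph{strict} concavity of $\L$ is genuinely required and does not follow from the infimum-of-parabolas picture. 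The paper secures it by differentiating~\eqref{eq:derivL} (using the Kato-provided differentiability of $(U_\a,V_\a)$) and bounding the resulting cross term via the Rayleigh inequality applied to $\frac{\mathrm{d}W_\a}{\mathrm{d}\a}$, which yields the quantitative bound $\L''(\a)\le -2\min\{d,D\}<0$. Then at any critical point of $\phi$ one has $\phi''=-\L''/\a>0$, so every critical point is a strict local minimum and hence unique. If you wish to avoid second derivatives entirely, you would need an independent argument that $\L'$ is \emph{strictly} decreasing on $(0,\infty)$; this can in fact be done (a swap inequality shows $\a\mapsto D\int_{\partial\omega}U_\a^2+d\int_\omega V_\a^2$ is nondecreasing, and combined with the explicit factor $2\a$ in~\eqref{eq:derivL} this forces strict monotonicity of $\L'$), but it is not in your plan and is more than a one-line estimate.
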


\begin{proof}
	\begin{enumerate}[label=(\roman*)]
		\item It is clear from \eqref{eq:F} that, for all $(U,V)\in\mathcal{T}$, the function $\a\mapsto\mathcal{F}(\a)[U,V]$ is even and concave. Then, $\L(\a)$, as a minimum of even and concave functions, is also even and concave. Finally, the continuity follows from the concavity.
		\item The differentiability of $\L(\a)$, as well as the one of $(U_\a,V_\a)$ with respect to $\a$, follows from the theory of T. Kato \cite[Chapter VII]{kato-perturbation} since the operator defining this eigenvalue depends analytically on $\alpha$, and since its principal eigenvalue is isolated (as a consequence of the fact that it is the only eigenvalue associated with a positive eigenfunction). Then, if we define $\mathcal L:\R\times\R\times\mathcal{T}\times\mathcal{T}\to\R$ as follows,
		\begin{equation*}
			 \mathcal L (\lambda,\alpha,W,\Psi) := \lambda + A(\alpha)(W,\Psi)-\lambda B(W,\Psi),
			\end{equation*}
		where $A(\a)$ and $B$ are the bilinear forms introduced in \eqref{eq:bilA} and \eqref{eq:bilB}, Proposition \ref{prop:main} gives that $\L(\a)=\mathcal L (\L(\a),\alpha,W_\a,W_\a)$, where we have set $W_\a=(U_\a,V_\a)$. Hence,
		\[\L'(\a)=\frac{\partial\mathcal{L}}{\partial \lambda}\Lambda'(\a)+\frac{\partial\mathcal{L}}{\partial \alpha}+\frac{\partial\mathcal{L}}{\partial W}\frac{\mathrm{d} W_\a}{\mathrm{d}\a}+\frac{\partial\mathcal{L}}{\partial \Psi}\frac{\mathrm{d} W_\a}{\mathrm{d}\a},\]
		with all the partial derivatives evaluated at $(\L(\a),\alpha,W_\a,W_\a)$. Since $W_\a\in\mathcal{A}$, we have
		\begin{equation}
			\label{eq:2.18bis}
		\frac{\partial\mathcal{L}}{\partial \lambda}(\L(\a),\alpha,W_\a,W_\a)=1-B(W_\a,W_\a)=0,
		\end{equation}
		and
		\[
		\frac{\partial\mathcal{L}}{\partial W}(\L(\a),\alpha,W_\a,W_\a)=\frac{\partial\mathcal{L}}{\partial \Psi}(\L(\a),\alpha,W_\a,W_\a)=A(\a)(W_\a,W_\a)-\L(\a)B(W_\a,W_\a)=0,
		\]
		thanks again to Proposition \ref{prop:main}. Thus, we conclude that
		\[\L'(\a)=\frac{\partial\mathcal{L}}{\partial \alpha}(\L(\a),\alpha,W_\a,W_\a)=-2\a\left(D\int_{\partial \omega} U_\a^2 +d \int_{\omega} V_\a^2\right).
		\]
		\item First of all observe that the function $\frac{-\L(\a)}{\a}$ is continuous for $\a>0$ thanks to part \ref{prop:concavity.i}, and also positive if $-\L (0) >0$. Moreover, we have that
		\[
		\lim_{\a\searrow0}\frac{-\L(\a)}{\a}=+\infty=\lim_{\a\to+\infty}\frac{-\L(\a)}{\a}
		\]
		where the first equality comes from the assumption $\L(0)<0$ and the second one can be obtained by taking $(U,V)=(1,0)$ as a test function in~\eqref{eq:Rayleigh}. Then, $\frac{-\L(\a)}{\a}$ has at least a minimum point in $\{\a>0\}$. To check that it is unique, we show that, at any critical point, the second derivative of $\frac{-\L(\a)}{\a}$ is positive.
		
		On the one hand, at any critical point, a direct differentiation gives
		\begin{equation*}
			\left(-\frac{\L(\alpha)}{\alpha} \right)'' =  - \frac{\L '' (\alpha)}{\alpha} + 2 \frac{\L ' (\alpha)\a-\L(\a)}{\alpha^3}=- \frac{\L '' (\alpha)}{\alpha},
		\end{equation*}
	since $\left(-\frac{\L(\alpha)}{\alpha} \right)'=\frac{-\L ' (\alpha)\a+\L(\a)}{\a^2}$ vanishes.
	
	On the other hand, by differentiating \eqref{eq:derivL}, we have
	\begin{equation}
		\label{eq:secderiv}
	\L''(\a)=-2\left(D\int_{\partial \omega} U_\a^2 +d \int_{\omega} V_\a^2\right)-4\a\left(D\int_{\partial \omega} U_\a\frac{\mathrm{d}U_\a}{\mathrm{d}\a} +d \int_{\omega} V_\a\frac{\mathrm{d}V_\a}{\mathrm{d}\a}\right).
	\end{equation}
	The first summand in the previous expression, is negative. To study the sign of the second one, taking $W=W_\a$ and any $\Psi\in\mathcal{T}$, we differentiate \eqref{eq:weak_eig} with respect to $\a$:
	\[
	\frac{\partial A(\a)}{\partial \a}\left(W_\a,\Psi\right)+A(\a)\left(\frac{\mathrm{d}W_\a}{\mathrm{d}\a},\Psi\right)=\L'(\a)B\left(W_\a,\Psi\right)+\L(\a)B\left(\frac{\mathrm{d}W_\a}{\mathrm{d}\a},\Psi\right).
	\]
	Now, we take $\Psi=\frac{\mathrm{d}W_\a}{\mathrm{d}\a}$ in the above expression and, observing that $B\left(W_\a,\frac{\mathrm{d}W_\a}{\mathrm{d}\a}\right)=0$, since $B\left(W_\a,W_\a\right)=1$, we get
	\begin{align*}
		0&\leq A(\a)\left(\frac{\mathrm{d}W_\a}{\mathrm{d}\a},\frac{\mathrm{d}W_\a}{\mathrm{d}\a}\right)-\L(\a)B\left(\frac{\mathrm{d}W_\a}{\mathrm{d}\a},\frac{\mathrm{d}W_\a}{\mathrm{d}\a}\right) \\
		&=-\frac{\partial A(\a)}{\partial \a}\left(W_\a,\frac{\mathrm{d}W_\a}{\mathrm{d}\a}\right) \\
		&=2\a\left(D\int_{\partial \omega} U_\a\frac{\mathrm{d}U_\a}{\mathrm{d}\a} +d \int_{\omega} V_\a\frac{\mathrm{d}V_\a}{\mathrm{d}\a}\right),
		\end{align*}
		where the first inequality comes from Proposition \ref{prop:main} and the last identity from \eqref{eq:bilA}. As a consequence, \eqref{eq:secderiv} gives, at any critical point,
		\[
		\L''(\a)\leq-2\min\{D,d\}<0.
		\]
It follows that any critical point of $\frac{-\Lambda (\alpha)}{\alpha}$ must be a strict local minimum; hence the minimum point is unique.
		\end{enumerate} \end{proof}

\section{Dependence of the speed of propagation on $D$}
\label{section:c^*_D}
We are now in a position to analyze the influence of various parameters on the spreading speed~$c^*$. In later sections we will turn to its dependence on the size and shape of the domain, while here we start  by focusing on the dependence on the diffusion coefficient $D$ on the surface. In particular, we will see that some previous results, which had been obtained in~\cite{RTV} in the homogeneous case and with $\omega$ being a ball in $\R^N$, can be extended to our heterogeneous framework. Nonetheless, we will also point out that some results remain true only in the radially symmetric case.

In this subsection, to highlight the fact that the parameter $D$ varies, we will write $c^* (D)$, $\L_D (\a)$ and $\mathcal{F}_D (\alpha)$, instead of, respectively, $c^*$,  $\L (\alpha)$ and $\mathcal{F} (\alpha)$. 
Moreover, for the spreading speed to always be well-defined, we assume that
\begin{equation}\label{instability}
\sup_{D >0} \, \Lambda_D (0) <0 .
\end{equation}

Our first result is concerned with the limits of $c^*(D)$ as~$D$ is either small or large. Moreover, for some values of the parameters of our problem, we obtain a comparison of~$c^*(D)$ with $c^*_f$, the spreading speed of problem~\eqref{eq:evol_field} without the surface.

\begin{theo}\label{th:D_asymp}
Assume that \eqref{instability} and \eqref{instability_f} hold.
\begin{enumerate}[label=(\roman*)]
\item The spreading speed $c^* (D)$ converges to some $c_0 $ as $D~\to~0$. If, moreover, 
\begin{equation}\label{hyp:D_asymp}\max_{y \in \partial \omega} \partial_u g(y,0) <- 2 \Lambda_f (0),
\end{equation}
then 
$$c_0 \in (0, c^*_f) .$$
\item We have that $c^* (D) \to +\infty$ as $D \to +\infty$, and more precisely 
$$\exists A, B> 0, \quad A \sqrt{D} \leq c^* (D) \leq  B \sqrt{D}, \quad \text{for all $D$ large enough.}$$
\end{enumerate}
\end{theo}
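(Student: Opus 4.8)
The two parts both rest on the variational formula of Proposition~\ref{prop:main}, rewritten to isolate $D$: for $(U,V)\in\mc{A}$,
\[
\mathcal F_D(\a)[U,V]=D\Big(\int_{\partial\omega}|\nabla U|^2-\a^2\int_{\partial\omega}U^2\Big)+\mathcal F_0(\a)[U,V],
\]
where $\mathcal F_0(\a)$ is the functional obtained by setting $D=0$ in \eqref{eq:F}, and $\L_0(\a):=\min_{\mc{A}}\mathcal F_0(\a)$. I shall repeatedly use three facts valid for every $\L_D$: it is even and concave, so $\L_D(\a)\le\L_D(0)$ (Proposition~\ref{prop:concavity}); by \eqref{instability}, $-\L_D(\a)\ge-\L_D(0)\ge\delta_0:=-\sup_{D>0}\L_D(0)>0$; and the two $D$- and $\a$-uniform test-function bounds $\L_D(\a)\le-D\a^2+C_0$ (taking $(U,V)=(1,0)$) and $\L_D(\a)\le-d\a^2+C_1$ (taking $(U,V)=(0,V_0)$ with $V_0$ a normalised constant).

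For part (ii) I sandwich $c^*(D)$ at the scale $\a\sim D^{-1/2}$. For the \emph{upper} bound I evaluate at $\a=D^{-1/2}$: Proposition~\ref{prop:ineq1} gives $\L_D(D^{-1/2})\ge\min\{\L_g(D^{-1/2}),\L_f(D^{-1/2})\}$, and since $\L_g(D^{-1/2})=\L_g(0)-1\ge-\max_{\partial\omega}\g-1$ and $\L_f(D^{-1/2})=\L_f(0)-d/D$ are bounded below uniformly for $D\ge1$, one gets $-\L_D(D^{-1/2})\le B$ for a constant $B>0$, whence $c^*(D)\le\sqrt D\,(-\L_D(D^{-1/2}))\le B\sqrt D$. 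For the \emph{lower} bound I combine $\L_D(\a)\le-D\a^2+C_0$ with $-\L_D(\a)\ge\delta_0$; writing $\a=t/\sqrt D$, both give $\frac{-\L_D(\a)}{\a}\ge\sqrt D\,\max\{\delta_0/t,\ t-C_0/t\}=:\sqrt D\,h(t)$, and as $h$ is continuous, strictly positive and tends to $+\infty$ at $0$ and $+\infty$, we have $A:=\inf_{t>0}h(t)>0$ and thus $c^*(D)=\min_{t>0}\frac{-\L_D(t/\sqrt D)}{t/\sqrt D}\ge A\sqrt D$.

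For the convergence in part (i), I first prove $\L_D(\a)\to\L_0(\a)$ pointwise: $\limsup_{D\to0}\L_D(\a)\le\L_0(\a)$ follows by freezing any admissible test pair, while $\liminf_{D\to0}\L_D(\a)\ge\L_0(\a)$ follows by inserting the minimiser $(U_D,V_D)$ of $\L_D(\a)$ into the decomposition above and discarding $D\int_{\partial\omega}|\nabla U_D|^2\ge0$, which yields $\L_D(\a)\ge\L_0(\a)-D\a^2$. Being pointwise limits of even concave functions, the $\L_D$ converge to $\L_0$ locally uniformly on $\R$. To pass to the limit in $c^*(D)=\min_{\a>0}\frac{-\L_D(\a)}{\a}$, I confine the unique minimiser $\a_D$ (Proposition~\ref{prop:concavity}) to a fixed compact subinterval of $(0,\infty)$: the bound $\L_D(\a)\le-d\a^2+C_1$ gives $c^*(D)\ge d\a_D-C_1/\a_D$, while $c^*(D)\le\frac{-\L_D(\tilde\a)}{\tilde\a}$ for a fixed $\tilde\a$ is bounded above, forcing $\a_D\le\a_{\max}$; and $-\L_D(\a_D)\ge\delta_0$ gives $c^*(D)\ge\delta_0/\a_D$, forcing $\a_D\ge\a_{\min}>0$. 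Uniform convergence on $[\a_{\min},\a_{\max}]$ then yields $c^*(D)\to c_0:=\min_{\a>0}\frac{-\L_0(\a)}{\a}$, which is positive because $\L_0(0)\le-\delta_0<0$.

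It remains to prove $c_0<c^*_f$ under \eqref{hyp:D_asymp}, which I regard as the main obstacle. With $\a_f:=\sqrt{-\L_f(0)/d}$ the minimiser of $\frac{-\L_f(\a)}{\a}$, one has $\l_f:=\L_f(\a_f)=2\L_f(0)$ and $c^*_f=\frac{-\l_f}{\a_f}$, so it suffices to show $\L_0(\a_f)>\l_f$, since then $c_0\le\frac{-\L_0(\a_f)}{\a_f}<\frac{-\l_f}{\a_f}=c^*_f$. I estimate $\mathcal F_0(\a_f)[U,V]-\l_f\big(\int_{\partial\omega}U^2+\int_\omega V^2\big)$ by splitting off the bulk part, which equals $\mathcal F_f(\a_f)[V]-\l_f\int_\omega V^2\ge0$ by definition of $\L_f(\a_f)=\l_f$. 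The surface part is a quadratic form in $U$ with leading coefficient $a(y):=-\g-\l_f+\ka\mu$; hypothesis \eqref{hyp:D_asymp}, i.e.\ $\max_{\partial\omega}\g<-2\L_f(0)=-\l_f$, makes $-\g-\l_f>0$, hence $a>0$ on $\partial\omega$, and completing the square rewrites it as $\int_{\partial\omega}a\big(U-\tfrac{\ka\sqrt{\mu\nu}}{a}V\big)^2+\int_{\partial\omega}\ka\nu\,\tfrac{-\g-\l_f}{a}V^2\ge0$. Thus $\L_0(\a_f)\ge\l_f$; moreover $a\ge\min_{\partial\omega}a>0$ makes the shifted form coercive, so $\L_0(\a_f)$ is attained, and equality would force $V\equiv cV_f$ with $V_f>0$ the principal Neumann bulk eigenfunction together with $\ka V_f^2\equiv0$ on $\partial\omega$, which is impossible since $\ka\not\equiv0$ and $V_f>0$ up to the boundary. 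Hence $\L_0(\a_f)>\l_f$ and $c_0\in(0,c^*_f)$. The delicate points are precisely this strict comparison: the square-completion needs \eqref{hyp:D_asymp} to guarantee $a>0$, and the strictness needs the nontriviality of $\ka$ and the boundary positivity of the bulk eigenfunction.
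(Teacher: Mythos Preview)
Your argument is correct and, for the convergence in~(i) and the bounds in~(ii), essentially matches the paper's proof (same test pairs, same use of Proposition~\ref{prop:ineq1}, same Dini-type passage to local uniformity, same compactness argument for~$\alpha_D$).

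The genuine difference lies in the proof of the strict inequality $c_0<c^*_f$. The paper takes a minimizing sequence $(U_n,V_n)$ for $\Lambda_0(\alpha^*)$ and splits into two cases according to whether $\liminf\int_{\partial\omega}U_n^2$ is positive or zero; in the first case it uses~\eqref{hyp:D_asymp} directly, in the second it compares with the Robin eigenvalue $\Lambda_{f,\kappa}(\alpha^*)>\Lambda_f(\alpha^*)$. Your square-completion rewriting of the surface quadratic as $\int_{\partial\omega}a\bigl(U-\tfrac{\kappa\sqrt{\mu\nu}}{a}V\bigr)^2+\int_{\partial\omega}\tfrac{\kappa\nu(-\partial_u g-\lambda_f)}{a}V^2$ with $a>0$ is more unified and avoids the case split; it makes transparent why~\eqref{hyp:D_asymp} is exactly the condition needed. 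One word of caution: the claim that ``the shifted form is coercive, so $\Lambda_0(\alpha_f)$ is attained'' is too quick, since with $D=0$ there is no gradient control on~$U$ and the infimum need not a priori be a minimum in $H^1(\partial\omega)\times H^1(\omega)$. The fix is straightforward and in the spirit of your own decomposition: assume $\Lambda_0(\alpha_f)=\lambda_f$, take a minimizing sequence, and note that each of your three nonnegative pieces tends to~$0$; the third forces $\int_{\partial\omega}\kappa V_n^2\to0$, the first (plus compactness of $V_n$ in $H^1(\omega)$ and of the trace) forces the limit $V$ to be a multiple of~$V_f$, and the second forces $U_n\to \tfrac{\kappa\sqrt{\mu\nu}}{a}V$ strongly in $L^2(\partial\omega)$; combining, either $V=cV_f$ with $c\neq0$ and $\int_{\partial\omega}\kappa V_f^2=0$ (impossible since $V_f>0$ on $\overline\omega$ and $\kappa\not\equiv0$), or $V\equiv0$ and then $U_n\to0$, contradicting the normalization. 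This is precisely the contradiction you state, just run along the sequence rather than at a putative minimizer.
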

\begin{rmk}
As far as statement $(i)$ is concerned, it is enough to assume that $\Lambda_D (0) <0$ for some (possibly small) $D>0$. Indeed, it easily follows from Proposition~\ref{prop:main} that $\Lambda_D (0)$ is nonincreasing with respect to $D$, so that this would be enough to guarantee the existence of a spreading speed as $D \to 0$.

Notice also that, although inequality~\eqref{hyp:D_asymp} may seem a bit abstract, in the case of an homogeneous reaction term $f(y,v) \equiv f(v)$ it simply rewrites as
$$\max_{y \in \partial \omega} \partial_u g (y,0) < 2 f' (0).$$
This is consistent with the fact that, in the homogeneous case, the diffusion threshold for acceleration by the surface becomes $D = 0$ when $g '(0) = 2 f' (0)$; see~\cite{BRR_plus}.

In the more general heterogeneous case, an explicit sufficient condition for~\eqref{hyp:D_asymp} to hold, that can be obtained by taking $V=1$ as a test function in~\eqref{eq:eig_f}, is
$$\max_{y \in \partial \omega} \partial_u g (y,0) < \frac{2}{|\omega|}  \int_{\omega} \partial_v f (\cdot,0).$$
\end{rmk}
The proof of the above theorem is postponed to the next subsection. This result states that, when the reaction on the surface is weaker than the reaction in the bulk in the sense specified by~\eqref{hyp:D_asymp}, then the surface slows down the propagation if the diffusion parameter~$D$ is too small. On the other hand, regardless of the reaction terms (provided that there is a positive spreading speed, which is guaranteed by~\eqref{instability}), the surface accelerates the propagation when $D$ is large.

To illustrate the previous result, we point out that both statements $(i)$ and~$(ii)$ hold true in the case when the domain is radially symmetric, the reaction term is logistic in the bulk, i.e., $f(y,v) = r v (1-v)$ for some $r >0$ (this also guarantees that $\Lambda_f (0) <0$), and there is no reaction on the surface, i.e., $\partial_u g (\cdot ,0) \equiv 0$. They also hold true if, instead of assuming that the domain is radially symmetric, one assumes that it is large enough. Indeed in both cases one can check all the necessary assumptions thanks to the variational characterization in Proposition~\ref{prop:main}. In particular, we partly recover the main results of~\cite{T16,RTV} but in a more general setting where the reaction and exchange terms may not be homogeneous and the domain may not be spherical.

However, in~\cite{T16,RTV}, one of the present authors and his collaborators went further and showed that there exists a critical diffusion threshold $D_0$ below which spreading is slowed down by the surface and above which it is accelerated. The key point was the strict monotonicity of $c^* (D)$ with respect to $D$, which was shown thanks to a semi-explicit construction of sub and supersolutions which allowed to characterize the spreading speed as a solution of a finite dimensional system. As we will see below in Proposition~\ref{prop:nonmonotoneD}, such a monotonicity may not be true in general. Still, we manage to recover it in a more general radially symmetric framework, as a consequence of the variational formula for $\Lambda (\alpha)$ given in Proposition~\ref{prop:main}. This is the content of the next result.
\begin{theo}\label{th:D_monotonicity}
Assume that \eqref{instability} and~\eqref{instability_f} hold, that the domain is either annular or a ball, i.e.,
\begin{equation*}
\omega = B_R \setminus B_r, \qquad R > r \geq 0,
\end{equation*}
where $B_r\subset\R^N$ denotes the open ball centered at the origin with radius $r$, and that all heterogeneities are radial functions, in the sense that
\begin{equation}\label{ass:radial}
 \kappa (y)= \kappa (|y|), \quad \partial_u f (y,0) = \tilde{f} (|y|), \quad  \partial_u g (y,0) = \tilde{g} (|y|),
\end{equation}
where $|y|$ denotes the Euclidean norm of $y$. 

Then $D \mapsto c^* (D)$ is increasing, and, in particular, there exists at most one value $D_0$ such that $c^* (D_0)= c^*_f$.
\end{theo}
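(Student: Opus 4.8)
The plan is to reduce the global monotonicity of $c^*(D) = \min_{\alpha>0}\frac{-\Lambda_D(\alpha)}{\alpha}$ to a pointwise-in-$\alpha$ statement, and to extract the latter from the Rayleigh characterization of Proposition~\ref{prop:main} once one knows that, in the radial setting, the principal eigenfunction carries no surface diffusion energy. Concretely, I will show that for each fixed $\alpha>0$ the map $D\mapsto\Lambda_D(\alpha)$ is strictly decreasing, equivalently that $D\mapsto\frac{-\Lambda_D(\alpha)}{\alpha}$ is strictly increasing. Granting this, the monotonicity of $c^*(D)$ is a routine ``pointwise-increasing implies minimum-increasing'' argument, and the statement on $D_0$ is immediate from injectivity.

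First I would establish radiality of the eigenfunctions. Under \eqref{ass:radial}, both $\omega=B_R\setminus B_r$ and all coefficients of \eqref{coupled-Eig-Pb} are invariant under every rotation $\mathcal{R}\in O(N)$; hence if $(U_\alpha,V_\alpha)$ is a principal eigenfunction, so is $(U_\alpha\circ\mathcal{R},V_\alpha\circ\mathcal{R})$. By the simplicity asserted in Theorem~\ref{theo_eigen}, together with the normalization $(U_\alpha,V_\alpha)\in\mathcal{A}$, the two must coincide, so $U_\alpha$ and $V_\alpha$ are radial. The decisive consequence is that $U_\alpha$ is constant on each connected component of $\partial\omega$ (each being a sphere $\{|y|=r\}$ or $\{|y|=R\}$), so that its Laplace--Beltrami gradient vanishes identically, $\nabla U_\alpha\equiv 0$ on $\partial\omega$.

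I then turn to the monotonicity in $D$. From \eqref{eq:F} the only $D$-dependent terms of $\mathcal{F}_D(\alpha)[U,V]$ are $D\int_{\partial\omega}|\nabla U|^2$ and $-D\alpha^2\int_{\partial\omega}U^2$, so that for any fixed pair $(U,V)$,
$$\mathcal{F}_{D_2}(\alpha)[U,V]-\mathcal{F}_{D_1}(\alpha)[U,V]=(D_2-D_1)\left(\int_{\partial\omega}|\nabla U|^2-\alpha^2\int_{\partial\omega}U^2\right).$$
Fix $0<D_1<D_2$ and let $(U_1,V_1)\in\mathcal{A}$ be the radial minimizer realizing $\Lambda_{D_1}(\alpha)$ in \eqref{eq:Rayleigh}. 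Using it as a competitor for $\Lambda_{D_2}(\alpha)$ and recalling $\nabla U_1\equiv0$, I obtain
$$\Lambda_{D_2}(\alpha)\le\mathcal{F}_{D_2}(\alpha)[U_1,V_1]=\Lambda_{D_1}(\alpha)-(D_2-D_1)\,\alpha^2\int_{\partial\omega}U_1^2<\Lambda_{D_1}(\alpha),$$
the strictness coming from $\alpha>0$ and $U_1>0$. This is exactly the desired pointwise monotonicity.

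To conclude, fix $0<D_1<D_2$ and let $\alpha_2>0$ attain the minimum defining $c^*(D_2)$, which exists by Proposition~\ref{prop:concavity}\ref{prop:concavity.iii}, since \eqref{instability} gives $\Lambda_D(0)<0$. Then
$$c^*(D_2)=\frac{-\Lambda_{D_2}(\alpha_2)}{\alpha_2}>\frac{-\Lambda_{D_1}(\alpha_2)}{\alpha_2}\ge\min_{\alpha>0}\frac{-\Lambda_{D_1}(\alpha)}{\alpha}=c^*(D_1),$$
so $D\mapsto c^*(D)$ is strictly increasing, hence injective, and can equal $c^*_f$ for at most one value $D_0$. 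The main obstacle --- and the only place where radiality is genuinely used --- is the vanishing of $\nabla U_\alpha$: without it the sign of the coefficient $\int_{\partial\omega}|\nabla U_\alpha|^2-\alpha^2\int_{\partial\omega}U_\alpha^2$ is indeterminate, which is precisely the mechanism allowing the non-monotone behaviour recorded in Proposition~\ref{prop:nonmonotoneD}. The one technical point I would check carefully is that the comparison above yields a \emph{strict} inequality; this is guaranteed by the strict positivity of the principal eigenfunction.
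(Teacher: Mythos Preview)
Your proof is correct and follows essentially the same approach as the paper: use rotational invariance together with the simplicity of the principal eigenvalue to deduce that $U_\alpha$ is radial, hence constant on each sphere of $\partial\omega$, so that the surface Dirichlet energy drops out of~$\mathcal{F}_D(\alpha)[U_\alpha,V_\alpha]$; then plug the $D_1$-eigenfunction into the Rayleigh quotient for $\Lambda_{D_2}(\alpha_2)$ (with $\alpha_2$ the minimizer for $c^*(D_2)$) and exploit $U_1>0$ for strictness. The only cosmetic difference is that you first record the pointwise monotonicity $\Lambda_{D_2}(\alpha)<\Lambda_{D_1}(\alpha)$ for all $\alpha>0$ before specializing to $\alpha_2$, whereas the paper works directly at $\alpha_2$.
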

Together with Theorem~\ref{th:D_asymp}, this result provides sufficient conditions for the existence of a diffusion threshold on the surface below which the surface slows down the propagation and above which it accelerates the propagation. These sufficient conditions include the assumptions of~\cite{T16,RTV}.
\begin{proof}[Proof of Theorem~\ref{th:D_monotonicity}] The key point is that, since it is unique (up to multiplication by a positive factor), under the assumptions of Theorem~\ref{th:D_monotonicity} the eigenfunction pair $(U_{\alpha,D}, V_{\alpha,D})$ associated with $\Lambda_D (\alpha)$ has to be radially symmetric. As a consequence, the minimum in~\eqref{eq:Rayleigh} can be taken over radially symmetric functions, and therefore in the definition of $\mathcal{F}_D (\alpha) [U, V]$ given in \eqref{eq:F}, we may remove the term $D \int_{\partial \omega} | \nabla U|^2$ without loss of generality. Since the remaining term that depends on $D$, that is $-D \alpha^2 \int_{\partial \omega} U^2$, is decreasing with respect to~$D$, we have that $c^*(D)$ is nondecreasing in $D$.

To show the strict monotonicity, we fix $0 < D_1 < D_2$, and $\alpha_2 >0 $ such that
$$\frac{-\Lambda_{D_2} (\alpha_2)}{\alpha_2} = c^* (D_2).$$
Now consider $(U_1,V_1)\in\mc{A}$ the eigenfunction pair associated with $\Lambda_{D_1} (\alpha_2)$. Using it as a test function in the variational formula for $\Lambda_{D_2} (\alpha_2)$, we deduce that
$$\Lambda_{D_2} (\alpha_2) \leq \mc{F}_{D_2}(\a_2)[U_1,V_1]= \Lambda_{D_1} (\alpha_2) + \left(D_1 - D_2\right) \alpha_2^2 \int_{\partial \omega} U_1^2.$$ Since $U_1 >0$, we get that 
$$c^*(D_2)=-\frac{\Lambda_{D_2}(\a_2)}{\a_2}>-\frac{\Lambda_{D_1}(\a_2)}{\a_2}\geq c^*(D_1).$$
\end{proof}
\begin{rmk}\label{rmk:several}
Let us highlight several points.
\begin{itemize}
\item The difficulty in checking the monotonicity with respect to the  parameter $D$ in the non radial case is that diffusion along the $x$-variable and diffusion across the $y$-variable might have opposite effects on the speed. Indeed, if one replaces the operator $D \Delta u$ with $D_x \partial^2_{xx} u + D_y \Delta_y u$ in the surface equation in~\eqref{eq:evol}, one could immediately conclude, after obtaining the analogous characterization of the principal eigenvalue for such an operator, as in Proposition~\ref{prop:main} and reasoning as in the proof of Theorem~\ref{th:D_monotonicity}, that the speed $c^* (D_x,D_y)$ is nondecreasing with respect to $D_x$ and nonincreasing with respect to~$D_y$.
\item In a similar fashion, it is easy to check that Theorem~\ref{th:D_monotonicity} remains true in dimension~$N=1$ (which is consistent with the original ecological motivation of road-field systems) without assuming~\eqref{ass:radial} on the heterogeneities. Indeed, in this case, the surface reduces to a union of parallel straight lines, thus the diffusion in the surface cross section component vanishes.

\item Our argument also shows that, under the assumptions of Theorem~\ref{th:D_monotonicity}, the eigenvalue $\Lambda_D (0)$ actually does not depend on $D$. In particular, in this theorem, assumption~\eqref{instability} can be simply replaced by $\Lambda_0 (0) < 0$. 
\item In the non-radial case, we have that $\Lambda_D (0)$ increases with respect to~$D$. Therefore, one may find a situation where the speed is positive for small~$D$ but extinction occurs for large~$D$ (see Subsection~\ref{sec:D_counter}). However, we do not theoretically know whether the monotonicity with respect to $D$ can fail under assumption~\eqref{instability}. Numerical simulations suggest that it does fail in some situations (see Section~\ref{sec:initial-experiments}).
\end{itemize}
\end{rmk}

\subsection{Proof of Theorem~\ref{th:D_asymp}} 

\emph{Proof of (i).} We start with the limit as $D \to 0$. First fix $\alpha \geq 0$ and introduce the formal limit of $\L_D (\alpha)$, 
$$\Lambda_0 (\alpha) :=  \inf_{(U,V)\in\mc A} \mc F_0(\a)[U,V], $$
where $\mc F_0(\a)[U,V]$ is the quantity defined in \eqref{eq:F} with $D=0$.
Note that this might no longer be a minimum because the regularizing term with $\nabla U$ has vanished. However, $\mc F_0(\a)[U,V]$ with~$(U,V)\in\mc A$ can still be bounded from below, so that the infimum is well-posed.

Now we take a minimizing sequence $(U_n, V_n)\in\mc A$. Then,
$$\Lambda_D (\alpha) \leq \mc F_D(\a)\left[U_n,V_n\right]\leq \Lambda_0 (\alpha) + \varepsilon + D \int_{\partial \omega} |\nabla U_n|^2 - D \alpha^2 \int_{\partial \omega} U_n^2 ,$$
where $\varepsilon >0$ is arbitrarily small and $n$ is large (depending on $\varepsilon$). Thus
$$\limsup_{D \to 0} \Lambda_D (\alpha) \leq \Lambda_0 (\alpha).$$
Conversely, let $(U_D,V_D)\in\mc A$ be the minimizer  associated with $\Lambda_D (\alpha)$; thus
$$\Lambda_0 (\alpha) \leq \Lambda_D (\alpha) - D \int_{\partial \omega} |\nabla U_D|^2 + D\alpha^2 \int_{\partial \omega} U_D^2\leq \Lambda_D (\alpha)+  D \alpha^2,$$
and we can pass to the liminf and find that
$$\Lambda_0 (\alpha) \leq \liminf_{D \to 0} \Lambda_D (\alpha).$$
We now have constructed the pointwise limit of the eigenvalues $\L_D(\a)$ as $D \to 0$. Recall that, by Proposition~\ref{prop:concavity}, the function $\alpha \mapsto \Lambda_D (\alpha)$ is even and concave, hence it is nonincreasing on $\R_+$. We infer that the pointwise limit $\alpha \mapsto \L_0 (\alpha)$ is also even and concave; in particular, it is continuous. By applying Dini's second theorem we find that the convergence of $\Lambda_D$ to~$\Lambda_0$ is locally uniform in $\{\a\geq 0\}$.

Now, thanks to~\eqref{instability} which guarantees that $\Lambda_0 (0) < 0$, we can define
$$c_0 := \min_{\alpha >0} \frac{-\Lambda_0 (\alpha)}{\alpha} >0.$$
Here we also used the fact that $\lim_{\alpha \to +\infty} \frac{-\L_0 (\alpha)}{\alpha} \to +\infty$, which easily follows from the definition of~$\L_0(\a)$, for example by testing $\mc F_0(\a)$ with $(U,V)=\left(0,\left|\omega\right|^{-1/2}\right)$. Furthermore, the minimum in the definition of $c_0$ is reached for some $\alpha_0 >0$ (nonetheless, unlike $\Lambda_D$, we cannot apply Kato's theory and differentiate $\Lambda_0$ with respect to $\alpha$; in particular, we do not prove that the minimum point is unique).

We now prove that
\begin{equation}
\label{eq:c^*_D_bounded}
c^*(D) \quad \text{ remains bounded as $D \to 0$}.
\end{equation}
Indeed, for all $\a\geq 0$, $(U,V)\in\mc A$ and $D<d$ it holds
$$\mc F_D(\a)[U,V]=\mc F_D(0)[U,V]-\a^2\left(D\int_{\partial\omega}U^2+d\int_{\omega}V^2\right). $$
Thus,
\begin{equation}
\label{eq:bound}
\Lambda_D(0)-d\a^2= \Lambda_D(0)-\a^2\max\{D,d\}\leq \Lambda_D(\a)\leq\Lambda_D(0)-\a^2\min\{D,d\}\leq\Lambda_D(0)-D\a^2,
\end{equation}
and
$$c^*(D)\leq\min_{\a>0}\frac{d\a^2-\Lambda_D(0)}{\a}=2\sqrt{-d\Lambda_D(0)}. $$
Since $\Lambda_D(0)\to\Lambda_0(0)<0$ as $D\to 0$, this concludes the proof of \eqref{eq:c^*_D_bounded}.

Consider now the points $\a_D$, for small $D > 0$, where the minimum defining $c^*(D)$ is attained. We will now prove that
\begin{equation}
\label{eq:compactinterval}
0<\liminf_{D\to 0}\a_D\leq\limsup_{D\to 0}\a_D<+\infty.
\end{equation}

For the first inequality, assume that there exists a sequence of values of $D$ (not relabeled), for which $\a_D\to 0$ as $D\to 0$. Then, for such a sequence, we obtain from \eqref{eq:bound}
$$\lim_{D\to 0} c^*(D)=\lim_{D\to 0} -\frac{\Lambda_D(\a_D)}{\a_D}\geq\lim_{D\to 0}\frac{D\a_D^2-\L_D(0)}{\a_D}\to+\infty,$$
against \eqref{eq:c^*_D_bounded}. To prove the second inequality in \eqref{eq:compactinterval}, assume that there exists a sequence of values of $D$ (again not relabeled), for which $\a_D\to +\infty$ as $D\to 0$. Consider $(U,V)=(0,K)$, where $K$ is an appropriate constant so that $(0,K)\in\mc A$. Then,
$$c^*(D)= -\frac{\Lambda_D(\a_D)}{\a_D}\geq -\frac{\mc F_D(\a_D)[U,V]}{\a_D}=\frac{d\a_D^2+K^2\int_{\omega} \f - K^2 \int_{\partial \omega} \kappa(y)\nu }{\a_D}\to+\infty $$
as $D\to 0$, again against \eqref{eq:c^*_D_bounded}.

After these preliminaries, by the locally uniform convergence of $\Lambda_D$ to $\Lambda_0$, it is then straightforward that
$$\lim_{D \to 0} c^* (D) = c_0.$$

It remains to show that $c_0 < c^*_f$. Recall from \eqref{eq:eig_f} that $\Lambda_f (\alpha) = \Lambda_f (0) - d \alpha^2$
does not depend on $D$, and observe that $-\frac{\Lambda_f(\a)}{\a}$, $\a>0$, achieves its minimum at
$$\a^*:=\sqrt{\frac{-\L_f(0)}{d}}, $$
and
\begin{equation}
\label{eq:Lfa0}
\L_f\left(\a^*\right)=\Lambda_f (0) - d \alpha^{*2}=2\Lambda_f(0).
\end{equation}
Since
$$c^*_f = -\frac{\Lambda_f(\a^*)}{\a^*} = 2 \sqrt{-d \Lambda_f (0) },$$
for our purpose it is enough to show that
\begin{equation}\label{eq:wanted1}
\Lambda_0 \left( \a^* \right) >  \Lambda_f \left( \a^* \right).
\end{equation}
To this end, consider a minimizing sequence $(U_n,V_n)\in\mc A$ of the left-hand term. Then, 
\begin{equation*}
\begin{split}
 \mc F_0(\a^*)\left[U_n,V_n\right] & \geq   -\max_{y \in \partial \omega} \partial_u g (y,0)  \int_{\partial \omega}  U_n^2  + d \int_{\omega} |\nabla V_n|^2   -  \int_{ \omega} \left(d\a^{*2} +  \partial_v f (y,0)\right)V_n^2  \\
& \geq   -\max_{y \in \partial \omega} \partial_u g (y,0)  \int_{\partial \omega}  U_n^2  + \Lambda_f \left(\a^* \right) \int_\omega V_n^2.
\end{split}
\end{equation*}
We now distinguish two cases:

(a) if $\liminf_{n\to+\infty}\int_{\partial \omega} U_n^2 > 0$, then, by taking the $\liminf$ in the previous relation, we obtain
$$ \L_0(\a^*)\geq-\max_{y \in \partial \omega} \partial_u g (y,0)  \liminf_{n\to+\infty}\int_{\partial \omega}  U_n^2  + \Lambda_f \left(\a^* \right) \liminf_{n\to+\infty}\int_\omega V_n^2>\L_f(\a^*),$$
where the last inequality comes from \eqref{hyp:D_asymp} and \eqref{eq:Lfa0}, together with the normalization condition;

(b) if $\liminf_{n\to+\infty}\int_{\partial \omega} U_n^2 = 0$ and, thus, $\limsup_{n\to+\infty}\int_{ \omega} V_n^2 = 1$, we have in this case and up to extraction of a subsequence that
$$\L_0(\a^*)=\lim_{n\to+\infty}\mc F_0(\a^*)\left[U_n,V_n\right]=\lim_{n\to+\infty}\mc F_{f,\kappa}(\a^*)\left[V_n\right]\geq\L_{f,\kappa}(\a^*), $$
where $\mc F_{f,\kappa}(\a)$ and $\L_{f,\kappa}(\a)$ are defined in \eqref{eq:Ffk} and \eqref{eq:eig_fk}. Observe that, for the second equality, we have used that the sequence $V_n$ is bounded in $H^1(\omega)$, thus, the sequence of the traces is bounded in $L^2(\partial\omega)$ (see, e.g., \cite[Theorem 5.5.1]{Evans-pde}), while the last inequality follows from \eqref{eq:eig_fk}. We now conclude by comparing \eqref{eq:Ffk} with \eqref{eq:Ff} and recalling that the minimizer $\overline V$ in \eqref{eq:eig_fk} is positive in $\overline\omega$, thanks to the Hopf lemma, thus
$$\L_{f,\kappa}(\a^*)=\mc F_{f,\kappa}(\a^*)\left[\overline V\right]>\mc F_{f}(\a^*)\left[\overline V\right]\geq \L_{f}(\a^*). $$
We again obtain that \eqref{eq:wanted1} holds, and statement~$(i)$ of Theorem~\ref{th:D_asymp} is proved.

\vspace{0.3cm}

\noindent\emph{Proof of (ii).} Let us now investigate the limit as $D \to +\infty$. Recall again that
$$\Lambda_f (\alpha) = \Lambda_f (0) - d \alpha^2$$
does not depend on $D$, and from \eqref{eq:eig_g} that
$$\Lambda_{g,D} (\alpha) = \Lambda_{g,D} (0) - D \alpha^2.$$
From \eqref{eq:eig_g}, by taking constant $U$'s as test functions, one easily infers that $\Lambda_{g,D} (0)$ is bounded with respect to $D$. Using Proposition~\ref{prop:ineq1}, we infer that
$$c^* (D) \leq \min_{\alpha >0} \frac{ K +  D \alpha^2 }{\alpha},$$
for some $K >0$ (independent of $D$) and any $D >d$. It immediately follows that there exists a positive constant~$B$ such that $c^* (D) \leq B \sqrt{D}$ for any large $D$.

It remains to prove a lower estimate on $c^* (D)$, which requires an upper bound on $\Lambda_D (\alpha)$. First denote
$$\eta = \sup_{D >0} \Lambda_D (0) < 0,$$
where the negativity comes from hypothesis~\eqref{instability}. Since $\alpha \mapsto \Lambda_D (\alpha)$ is an even and concave function, we have on one hand that 
$$\Lambda_D (\alpha) \leq \eta <0,$$
for any $D >0$ and $\alpha \geq 0$.

On the other hand, using Proposition~\ref{prop:main} and $(U,V)=(1,0)$ as a test function, we also find that
$$\Lambda_D (\alpha) \leq \frac{1}{|\partial \omega|} \int_{\partial \omega}  (\kappa (y) \mu - \partial_u g (y,0) - D \alpha^2) \leq K - D \alpha^2,$$
for some constant $K \in \mathbb{R}$. Finally
$$\Lambda_D (\alpha ) \leq  \min \{ \eta, K - D \alpha^2 \},$$
and direct computations show that there exists $A >0$ such that $c^* (D) \geq A \sqrt{D}$ for any large~$D$. This concludes the proof of Theorem~\ref{th:D_asymp}.

\subsection{About counterexamples to Theorem~\ref{th:D_monotonicity}}\label{sec:D_counter}

In this section, we discuss the fact that the spreading speed may not be increasing with respect to $D$ in general. We first give a theoretical analysis of a situation where assumption~\eqref{instability} is not satisfied. 

\begin{prop}\label{prop:nonmonotoneD}
	Assume that $\omega$ is a ball, that
	$$\partial_v f(y,0) \equiv -1, \quad \kappa (y) \equiv 1.$$
	and that $g$ satisfies
	$$ \langle \partial_u g(0) \rangle := \frac{1}{| \partial \omega |} \int_{\partial \omega} \partial_u g (y,0) dy <0<\mu < \max_{y \in \partial \omega} \partial_u g (y,0).$$
	
Then, $\Lambda_D (0) < 0$, hence $c^* (D) >0$, for $D$ small enough, while $\Lambda_D (0) >0$ for $D$ large enough.
\end{prop}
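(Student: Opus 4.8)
The plan is to work directly with the Rayleigh characterization of $\Lambda_D(0)$ provided by Proposition~\ref{prop:main}. Specializing the functional \eqref{eq:F} to $\a=0$, $\partial_v f(y,0)\equiv-1$ and $\kappa(y)\equiv1$, we have
$$\Lambda_D(0)=\min_{(U,V)\in\mc{A}}\left[\, D\int_{\partial\omega}|\nabla U|^2-\int_{\partial\omega}\partial_u g(y,0)\,U^2+d\int_{\omega}|\nabla V|^2+\int_{\omega}V^2+\int_{\partial\omega}\left(\sqrt{\mu}\,U-\sqrt{\nu}\,V\right)^2\right],$$
where $\mc{A}=\{(U,V):\int_{\partial\omega}U^2+\int_\omega V^2=1\}$. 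Every term but the second is nonnegative, so the sign of $\Lambda_D(0)$ is dictated by the competition between the penalization $D\int_{\partial\omega}|\nabla U|^2$ and the reaction contribution $\int_{\partial\omega}\partial_u g(y,0)\,U^2$.

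For the small-$D$ regime, I would exploit the assumption $\max_{\partial\omega}\partial_u g(\cdot,0)>\mu$. Picking $y_0$ with $\partial_u g(y_0,0)>\mu$, continuity yields a fixed nonzero $U$ supported near $y_0$ with $\int_{\partial\omega}(\partial_u g(y,0)-\mu)\,U^2>0$. Testing the Rayleigh quotient with the pair $(U,0)$, for which the coupling term reduces to $\int_{\partial\omega}\mu U^2$, gives
$$\Lambda_D(0)\leq\frac{D\int_{\partial\omega}|\nabla U|^2-\int_{\partial\omega}\left(\partial_u g(y,0)-\mu\right)U^2}{\int_{\partial\omega}U^2}.$$
Since $U$ is independent of $D$, the numerator is negative for $D$ small, whence $\Lambda_D(0)<0$, and Theorem~\ref{th:spreading} gives $c^*(D)>0$.

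For the large-$D$ regime, I would argue by contradiction, assuming $\Lambda_{D_n}(0)\leq0$ along some $D_n\to+\infty$ with minimizers $(U_n,V_n)\in\mc{A}$. Dropping the nonnegative terms in the displayed formula yields $D_n\int_{\partial\omega}|\nabla U_n|^2\leq\int_{\partial\omega}\partial_u g\,U_n^2\leq\max_{\partial\omega}\partial_u g(\cdot,0)$, so $\int_{\partial\omega}|\nabla U_n|^2\to0$. As $\partial\omega$ is a connected sphere, the Poincaré--Wirtinger inequality forces $U_n$ to converge in $L^2(\partial\omega)$ to its mean $\bar U_n$, while $\{V_n\}$ stays bounded in $H^1(\omega)$. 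Along a subsequence $\bar U_n\to\bar U_\infty$ and $V_n\rightharpoonup V_\infty$ in $H^1(\omega)$, with strong $L^2(\omega)$ convergence and strong $L^2(\partial\omega)$ trace convergence; the constraint passes to $|\partial\omega|\bar U_\infty^2+\int_\omega V_\infty^2=1$, so $(\bar U_\infty,V_\infty)\neq(0,0)$. Taking the liminf, using weak lower semicontinuity of $\int_\omega|\nabla V|^2$, strong convergence of the other terms, and $\bar U_\infty^2\int_{\partial\omega}\partial_u g(y,0)=|\partial\omega|\langle\partial_u g(0)\rangle\,\bar U_\infty^2$, gives
$$0\geq-|\partial\omega|\langle\partial_u g(0)\rangle\,\bar U_\infty^2+d\int_{\omega}|\nabla V_\infty|^2+\int_{\omega}V_\infty^2+\int_{\partial\omega}\left(\sqrt{\mu}\,\bar U_\infty-\sqrt{\nu}\,V_\infty\right)^2.$$
Since $\langle\partial_u g(0)\rangle<0$, all terms on the right are nonnegative, forcing $\bar U_\infty=0$ and $V_\infty=0$, which contradicts the normalization; hence $\Lambda_D(0)>0$ for $D$ large. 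The main obstacle is making this large-$D$ limit rigorous: guaranteeing that the minimizing pairs do not lose mass and that the limit is nontrivial, which relies precisely on the Poincaré--Wirtinger inequality on $\partial\omega$ and on the compactness of the trace operator $H^1(\omega)\to L^2(\partial\omega)$ to pass to the limit in the coupling term.
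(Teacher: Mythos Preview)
Your proof is correct. Both parts work, and the large-$D$ argument is genuinely different from the paper's.

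For small $D$, you and the paper do essentially the same thing: test the Rayleigh quotient with $(U,0)$ where $U$ is concentrated near a point where $\partial_u g(\cdot,0)>\mu$. The paper routes this through the limit $\lim_{D\to0}\Lambda_D(0)=\inf_{(U,V)\in\mc A}\mc F_0(0)[U,V]$ (already established in the proof of Theorem~\ref{th:D_asymp}) and then approximates characteristic functions by smooth ones; your version is more direct, fixing a single smooth bump $U\in H^1(\partial\omega)$ once and for all and letting $D\to0$ in the resulting upper bound. Both are fine.

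For large $D$, the paper takes a modular route: it invokes the decoupling inequality $\Lambda_D(0)\geq\min\{\Lambda_{g,D}(0),\Lambda_f(0)\}$ of Proposition~\ref{prop:ineq1}, computes $\Lambda_f(0)=1$ directly from $\partial_v f\equiv-1$, and then shows separately that the purely-surface eigenvalue $\Lambda_{g,D}(0)\to-\langle\partial_u g(0)\rangle>0$ as $D\to+\infty$ (a homogenization statement on $\partial\omega$). Your argument is instead a direct compactness argument on the coupled minimizers: from $\Lambda_{D_n}(0)\leq0$ you extract $H^1$ bounds, use Poincar\'e--Wirtinger on the connected sphere $\partial\omega$ to force $U_n$ toward a constant, and reach a contradiction at the limit because every surviving term is nonnegative while $-\langle\partial_u g(0)\rangle>0$. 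The paper's approach is shorter once Proposition~\ref{prop:ineq1} is in hand and isolates the homogenization cleanly; yours is self-contained and makes no appeal to the decoupled eigenvalues.
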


First of all, notice that assumption~\eqref{instability} is not satisfied, since $g$ is not constant on $\partial\omega$, the boundary of the ball. Then, we recall that the fact that $\Lambda_D (0) < 0$ implies $c^* (D) >0$ comes from Proposition~\ref{prop:concavity}, while, when $\Lambda_D (0)>0$, extinction occurs and there is no spreading. Moreover, one may check that $\Lambda_D (\alpha)$ depends continuously on $D$, and then infer that, in the set of parameters of Proposition~\ref{prop:nonmonotoneD}, the speed $c^* (D)$ has to be decreasing with respect to~$D$ at least on some interval. In other words, this proposition states that there is a situation where increasing the parameter $D$ may have a negative impact on the propagation and even stop it.

In Section~\ref{sec:initial-experiments}, we will also provide a numerical example where $D \mapsto c^* (D)$ is not increasing, even under assumption~\eqref{instability} and with $\partial_v f$ everywhere positive.

\begin{proof}
Proceeding as in the proof of Theorem~\ref{th:D_asymp}, we find that
$$\lim_{D \to 0} \Lambda_D (0) =   \inf_{(U,V) \in \mc A} \mc F_0(0)\left[U,V\right].$$
Fix $y_0$ where $\partial_u g (y_0,0) = \max \partial_u g (\cdot, 0)>0$. Then, we take as an ansatz 
$$U_n = a_n \, \chi_{B_{\frac{1}{n}} \left(y_0\right)}, \quad V_n = 0,$$
where $B_{\frac{1}{n}} \left(y_0\right)$ denotes the ball of radius $\frac{1}{n}$ (with respect to the geodesic distance on the hypersurface $\partial \omega$) centered at $y_0$, and $a_n:=\left|B_{\frac{1}{n}} \left(y_0\right)\right|^{-1/2}$, so that the normalization $\int_{\partial \omega} U^2_n  = 1$ is respected. Notice that $U_n$ belongs to $L^2 (\partial \omega)$ but not to $H^1 (\partial \omega)$, thus we also introduce a smooth function $U_n^k$ such that
$$U_n^k \to U_n \mbox{ in } L^2 (\partial \omega),$$
and $\|U_n^k \|_{L^2 (\partial \omega)} = 1$. Plugging $\left(U_n^k , 0\right)$ in the above variational formula, passing to the limit as $k \to +\infty$ and then as $n \to +\infty$, we find thanks to the spatial regularity of~$g$ that
$$\lim_{D \to 0} \Lambda_D (0)  \leq\mu -\partial_u g (y_0,0) < 0.$$

On the other hand, it is clear that $D \mapsto \Lambda_D (0)$ is nondecreasing, and as a matter of fact it is even increasing because of the lack of radial symmetry of the eigenfunction. Regardless, let us check that $\Lambda_D (0)$ becomes positive for large $D$. From Proposition~\ref{prop:ineq1} we have
$$\Lambda_D (0) \geq \min \{ \Lambda_{g,D} (0), \Lambda_f(0) \} = \min \{ \Lambda_{g,D} (0), 1 \},$$
where $\Lambda_{g,D} (0)$ is as in \eqref{eq:eig_g} and, with the assumptions of this proposition, we have
$$\Lambda_f(0)=\min_{\substack{V \in H^1 (\omega) \\V\not\equiv 0}} \frac{d \int_{\omega} |\nabla V|^2 +  \int_{ \omega} V^2}{\int_{ \omega} V^2}\geq 1  ,$$
and the equality is reached for constant functions, thus $\Lambda_f(0)=1$.

 Recalling the definition of $\mc F_{g,D}$, one can show (we omit the details because the proof is similar to that of Theorem~\ref{th:R0} in the next section) that
$\Lambda_{g,D} (0)$ converges to $- \langle \partial_u g (0) \rangle$ as $D \to +\infty$. In particular, it is positive for any $D$ large enough, and, as announced, so is~$\Lambda_D (0)$. \end{proof}

\section{Dependence of the speed on the size of the domain}
\label{section:c^*_R}

In this section, we investigate how the speed $c^*$ depends on the domain $\omega$. More precisely, we focus on the influence of the size of the domain when its shape is fixed, and provide some theoretical results. The issue of shape optimization will be studied numerically in Section~\ref{sec:shape}.

Here, we will consider~\eqref{eq:evol} in a spatial domain~$\omega_R$ which is a rescaling of a fixed domain~$\omega$:
$$\omega_R = \{ Ry \ | \ y \in \omega \}.$$ 
As before, $\omega$ is a bounded, open and connected set with a smooth boundary. Since the system \eqref{eq:evol} involves spatially dependent functions, it is necessary to specify how these depend on the parameter~$R$, which will be done below. In any case, the resulting modified system will satisfy the same assumptions as before, for any $R >0$.

To reflect our choice of the varying parameter, we will temporarily denote by $c^* (R)$ the spreading speed (when positive) and by $\Lambda_R (\cdot)$ the corresponding eigenvalues. 

A complete understanding of how the speed depends on the parameter $R$ is a rather complicated issue. Thus, from the theoretical point of view, we will mostly focus on the asymptotic behaviour of the speed $c^* (R)$ as $R$ goes to either 0 or~$+\infty$.

\subsection{Behaviour of $c^*$ as the size of the domain shrinks}

In accordance with our choice of a rescaled domain, in \eqref{eq:evol} we replace the spatially dependent functions $f$, $\kappa$ and $g$ respectively by
\begin{equation}\label{eq:rescaled_f}
f_R (y,v) = f \left(\frac{y}{R},v\right) , \quad \kappa_R (y) = \kappa \left(\frac{y}{R}\right), \quad \mbox{ and } \ g_R (y,u) =  g\left(\frac{y}{R},u\right), \qquad y\in\omega_R,
\end{equation}
where $f$, $\kappa$ and $g$ satisfy the assumptions given in Section \ref{section1}.

Before stating the main result of this subsection, let us introduce some new notions related to an averaged version of the surface equation on the original domain $\partial\omega$. First, when it is possible we define
$$c^*_{\langle g \rangle} := 2 \sqrt{D \langle \partial_u g (0) \rangle},$$
where
$$\langle \partial_u g(0) \rangle := \frac{1}{| \partial\omega |} \int_{\partial \omega} \partial_u g (y,0) dy .$$
As far as we know, there are very few results on spreading speeds for reaction-diffusion equations on manifolds. Still, by a straightforward analogy one may expect $c^*_{\langle g \rangle}$ to be the spreading speed of the equation
$$\partial_t u = D \Delta u + \langle g (u) \rangle \quad \mbox{ for }  (x,y) \in \R \times \partial \omega , \ t> 0,$$
where similarly as above $ \langle g (u) \rangle$ denotes, for each value of $u$, the average of $g(\cdot,u)$ over~$\partial \omega$.

We are now in a position to describe the situation when $R \to 0$:
\begin{theo}\label{th:R0}
We have that
$$-\Lambda_R (\alpha) \to D \alpha^2 + \langle \partial_u  g (0) \rangle \ \mbox{ as } R \to 0,$$
where the convergence is locally uniform in~$\{\alpha\geq 0\}$.

In particular, if $\Lambda_R (0) <0$ for small $R>0$, then $\langle \partial_u g (0) \rangle \geq 0$ and 
$$c^* (R) \to c^*_{\langle g \rangle} \ \mbox{ as } R \to 0.$$
On the other hand, if $\langle \partial_u g (0) \rangle $ is negative, then for  small enough $R$ we have that $\Lambda_R (0) >0$, and extinction takes place.
\end{theo}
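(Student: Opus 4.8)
The plan is to read off $\Lambda_R(\a)$ from the Rayleigh characterisation of Proposition~\ref{prop:main} after rescaling the shrinking domain back to the fixed reference set $\omega$. Writing $y=Rz$ with $z\in\omega$ (resp. $z\in\partial\omega$) and setting $\hat U(z)=U(Rz)$, $\hat V(z)=V(Rz)$, the Jacobian factors $R^{N-1}$ (surface) and $R^{N}$ (bulk) can be collected; dividing through by the common $R^{N-1}$ one obtains the scale‑invariant quotient
\begin{equation*}
\Lambda_R(\a)=\min_{(\hat U,\hat V)\neq(0,0)}\frac{\mc{G}_R[\hat U,\hat V]}{\int_{\partial\omega}\hat U^2+R\int_{\omega}\hat V^2},
\end{equation*}
where
\begin{equation*}
\begin{aligned}
\mc{G}_R[\hat U,\hat V]&:=\frac{D}{R^2}\int_{\partial\omega}|\nabla\hat U|^2+\frac{d}{R}\int_{\omega}|\nabla\hat V|^2+\int_{\partial\omega}\kappa\left(\sqrt{\mu}\,\hat U-\sqrt{\nu}\,\hat V\right)^2\\
&\quad-\int_{\partial\omega}\left(D\a^2+\partial_u g(z,0)\right)\hat U^2-R\int_{\omega}\left(d\a^2+\partial_v f(z,0)\right)\hat V^2.
\end{aligned}
\end{equation*}
The decisive structural features are that the bulk contributions carry an extra power of $R$, while the surface Dirichlet energy is amplified by $R^{-2}$. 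Heuristically this forces the optimal surface profile to become constant, and the bulk mass to become negligible, as $R\to0$, leaving only the averaged surface reaction.

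For the upper bound on $\Lambda_R(\a)$ (the easy direction) I would plug in the explicit constant pair $\hat U\equiv c_1$, $\hat V\equiv c_2$ with $c_2=\sqrt{\mu/\nu}\,c_1$, chosen so that the coupling term vanishes identically, and $c_1$ fixed by the normalisation $c_1^2\left(|\partial\omega|+R\tfrac{\mu}{\nu}|\omega|\right)=1$. All gradient and coupling terms drop, $c_1^2|\partial\omega|\to1$, and the surviving bulk reaction term is $O(R)$, giving $\limsup_{R\to0}\Lambda_R(\a)\le-(D\a^2+\langle\partial_u g(0)\rangle)$; since this bound is explicit it is locally uniform in $\a$.

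The core of the argument is the matching lower bound. Let $(\hat U_R,\hat V_R)$ be the rescaled minimiser; the upper bound keeps $\mc{G}_R[\hat U_R,\hat V_R]=\Lambda_R(\a)$ bounded, and rearranging the identity for $\mc{G}_R$ (the bounded reaction terms being controlled by the normalisation) yields $\int_{\partial\omega}|\nabla\hat U_R|^2=O(R^2)$, $\int_{\omega}|\nabla\hat V_R|^2=O(R)$ and $\int_{\partial\omega}\kappa(\sqrt{\mu}\hat U_R-\sqrt{\nu}\hat V_R)^2=O(1)$. By the Poincar\'e inequality on $\partial\omega$ and on $\omega$, $\hat U_R$ and $\hat V_R$ are $L^2$‑close to their means $\overline U_R,\overline V_R$, with $\|\hat U_R-\overline U_R\|_{L^2(\partial\omega)}=O(R)$ and, through the trace theorem, $\|\hat V_R-\overline V_R\|_{L^2(\partial\omega)}=O(\sqrt R)$. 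Expanding $\hat U_R=\overline U_R+(\hat U_R-\overline U_R)$ in the surface reaction term then homogenises it, $\int_{\partial\omega}\partial_u g\,\hat U_R^2=\langle\partial_u g(0)\rangle\int_{\partial\omega}\hat U_R^2+O(R)$. The remaining and most delicate point is to show that the bulk carries a vanishing fraction of the mass, i.e. $R\int_{\omega}\hat V_R^2\to0$, equivalently $\int_{\partial\omega}\hat U_R^2\to1$; without it the lower bound would only produce $-(D\a^2+\langle\partial_u g(0)\rangle)\int_{\partial\omega}\hat U_R^2$, which is insufficient precisely when $D\a^2+\langle\partial_u g(0)\rangle<0$. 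Here the coupling term is used decisively: the normalisation forces $|\partial\omega|\,\overline U_R^2\le\int_{\partial\omega}\hat U_R^2\le1$, so $\overline U_R$ stays bounded; replacing traces by their means up to the $O(\sqrt R)$ error gives $\int_{\partial\omega}\kappa(\sqrt{\mu}\hat U_R-\sqrt{\nu}\hat V_R)^2\ge\tfrac12(\sqrt{\mu}\,\overline U_R-\sqrt{\nu}\,\overline V_R)^2\int_{\partial\omega}\kappa-O(R)$, and since $\mc{G}_R$ is bounded and $\int_{\partial\omega}\kappa>0$ (recall $\kappa\ge0$ is nontrivial), $\overline V_R$ must remain bounded, whence $R\int_{\omega}\hat V_R^2=O(R)\to0$. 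Dropping the three nonnegative terms of $\mc{G}_R$ and combining with the homogenisation yields $\liminf_{R\to0}\Lambda_R(\a)\ge-(D\a^2+\langle\partial_u g(0)\rangle)$. I expect this mass‑localisation step, together with the trace estimate underpinning it, to be the main obstacle; a secondary care point is that the averaging to the \emph{global} mean uses near‑constancy on each connected piece of $\partial\omega$, the several components being tied to a common value through the connected bulk via $\kappa$. Local uniformity of the convergence then follows, as in the proof of Theorem~\ref{th:D_asymp}, from concavity of $\a\mapsto\Lambda_R(\a)$ and continuity of the limit.

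Finally, the three conclusions follow from the locally uniform convergence $-\Lambda_R(\a)\to D\a^2+\langle\partial_u g(0)\rangle$. If $\Lambda_R(0)<0$ for small $R$, passing to the limit forces $-\langle\partial_u g(0)\rangle\le0$; minimising $\a\mapsto(D\a^2+\langle\partial_u g(0)\rangle)/\a$ over $\a>0$ produces the value $2\sqrt{D\langle\partial_u g(0)\rangle}=c^*_{\langle g\rangle}$, and when $\langle\partial_u g(0)\rangle>0$ the convergence $c^*(R)\to c^*_{\langle g\rangle}$ is obtained by confining the minimising $\a_R$ to a compact subinterval of $(0,+\infty)$ exactly as in the proof of Theorem~\ref{th:D_asymp} (the degenerate case $\langle\partial_u g(0)\rangle=0$, where $c^*_{\langle g\rangle}=0$, needing a short separate argument). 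Conversely, if $\langle\partial_u g(0)\rangle<0$ then $\Lambda_R(0)\to-\langle\partial_u g(0)\rangle>0$, so $\Lambda_R(0)>0$ for $R$ small and Theorem~\ref{th:spreading} gives extinction.
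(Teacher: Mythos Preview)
Your approach is essentially the same as the paper's: after rescaling to the fixed domain (the paper normalises so that $\int_{\partial\omega}\tilde U^2+\int_\omega\tilde V^2=1$, which amounts to your substitution $\tilde V=\sqrt{R}\,\hat V$), the upper bound uses the same constant test pair, and the lower bound proceeds via the same a~priori gradient estimates and Poincar\'e--Wirtinger. The one small difference is that you obtain the vanishing of the bulk mass by a direct estimate (bounded coupling $\Rightarrow$ $\overline V_R$ bounded $\Rightarrow$ $R\int_\omega\hat V_R^2=O(R)$), whereas the paper argues by contradiction that otherwise the $R^{-1}$ term in the coupling would make $\widetilde{\mathcal F}_R\to+\infty$; and you invoke concavity for local uniformity where the paper uses Dini's second theorem via monotonicity in~$\alpha$. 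Both variants are valid and the overall structure coincides.
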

The above theorem describes some sort of homogenization which occurs as the size of the domain goes to 0. It is interesting to see that, since the volume of $\omega_R$ decays to 0 faster than its boundary surface as $R \to 0$, the limit does not involve the bulk. In the situation when $g \equiv 0$, we recover that the speed goes to 0 as the domain shrinks (for fixed diffusions). This again represents a generalization of earlier results from \cite{RTV} to our more general setting.

\begin{proof}[Proof of Theorem~\ref{th:R0}]
Recall that
$$\Lambda_R (\alpha) = \min_{\mathcal{A}_R} \mathcal{F}_R (\alpha) [U,V],$$
where $\mathcal{F}_R$ and $\mathcal{A}_R$ are defined as in Proposition~\ref{prop:main} with $\omega_R$ and $\partial\omega_R$ instead of $\omega$ and $\partial\omega$.

By letting 
$$(U,V)= R^{-\frac{N-1}{2}} \left(\tilde{U} \left(\frac{y}{R}\right),  R^{-\frac{1}{2} } \tilde{V} \left(\frac{y}{R}\right) \right),$$
and immediately dropping the tildes on the functions  for convenience, we find that
\begin{equation}\label{eq:R_eigen1}
\Lambda_R (\alpha) = \min_{\mathcal{A}} \widetilde{\mathcal{F}}_R (\alpha) [U,V],
\end{equation}
where
\begin{align*}
\widetilde{\mathcal{F}}_R (\alpha) [U,V]&  := DR^{-2} \int_{\partial \omega} |\nabla U|^2  - \int_{\partial \omega} \left(D\alpha^2+\partial_u g \left(y,0\right) \right)U^2  \\
&\quad +  d R^{-2} \int_{\omega} |\nabla V|^2   -   \int_{ \omega} \left(d\alpha^2 +  \partial_v f (y,0)\right)V^2  + \int_{\partial \omega} \kappa (y) \left( \sqrt{\mu} U- \sqrt{ \frac{\nu}{R}} V \right)^2.
\end{align*}
Now choose $(U,V)= \left(|\partial \omega| + R \frac{\mu}{\nu} |\omega | \right)^{-1/2} \left(1, \sqrt{\frac{R \mu}{\nu}} \right)$ as a test function (observe that the multiplying constant is chosen to satisfy the normalization). 
Passing to the limsup as $R \to 0$, it is straightforward to obtain that, for any $\alpha \geq 0$,
\begin{equation}\label{eq:limsup_R0}
\limsup_{R \to 0} \Lambda_R (\alpha) \leq -D\alpha^2  - \langle \partial_u g (0) \rangle . 
\end{equation}
Let us now prove a reverse inequality. Denote by $(U_R,V_R)$ the (unique) minimizer of $\widetilde {\mathcal{F}}_R (\alpha)$ with the same normalization. 

Notice that
$$\frac{1}{R^2}\!\left(D \!\! \int_{\partial \omega} \!\!\!\! \left| \nabla U_R\right|^2 \!+\! d  \!\int_{\omega} \! | \nabla V_R |^2\right)\! \leq \!\Lambda_R (\alpha) + \!\int_{\partial \omega} \!\!\! \left(D\alpha^2+\partial_u g (y,0) \right)U_R^2  + \!\int_{ \omega} \!\! \left(d\alpha^2 +  \partial_v f (y,0)\right)V_R^2 .$$
The first term on the right-hand side can be bounded from above for small~$R$ thanks to~\eqref{eq:limsup_R0}. The second and third terms can be bounded uniformly with respect to $R$ thanks to the normalization $\int_{\partial \omega} U_R^2 + \int_\omega V_R^2 = 1$. It follows that
$$ \| \nabla U_R \|^2_{L^2 (\partial \omega)} +  \| \nabla V_R \|^2_{L^2 (\omega)} = O (R^2) \ \mbox{ as } R \to 0.$$
By Poincar\'{e}-Wirtinger inequality, we infer that
\begin{equation}\label{poinca}
 \| U_R\,  -  \langle  U_R \rangle \|_{H^1 (\partial \omega)} +   \| V_R\, -  \langle V_R  \rangle \|_{H^1 (\omega)}  = O (R)  \ \mbox{ as } R \to 0,
\end{equation}
where $\langle U_R \rangle$ and $\langle V_R \rangle $ denote the averages of $U_R$ and $V_R$ respectively on $\partial \omega$ and $\omega$.

Then, using again the normalization which insures that $U_R$ and $V_R$ are bounded in the $L^2$-norm, we get
$$\lim_{R \to 0} \int_{\partial \omega} \left(\kappa (y) \mu - D \alpha^2 - \partial_u g (y,0) \right) \left(U_R^2  \, - \langle U_R\rangle^2\right) = 0,$$ 
$$\lim_{R \to 0} \int_{\omega}  (d \alpha^2 + \partial_v f (y,0) ) (V_R^2 \,  - \langle V_R \rangle^2) = 0. $$ 
By the standard trace and Sobolev embeddings from $H^1 (\omega)$ to $H^{1/2} (\partial \omega)$ and from $H^{1/2} (\partial \omega)$ to $L^2 (\partial \omega)$, we also get from \eqref{poinca} that
\begin{eqnarray*}
	&& \left| \int_{\partial \omega} \kappa (y) \sqrt{\frac{\mu \nu}{R}} \left( U_R V_R\,  -  \langle U_R \rangle  \langle V_R \rangle \right) \right| \\
	&\leq  & \left| \int_{\partial \omega} \kappa (y) \sqrt{\frac{\mu \nu}{R}} (U_R \, - \langle U_R\rangle ) V_R \right| + \left| \int_{\partial \omega} \kappa (y) \sqrt{\frac{\mu \nu}{R}} \langle U_R\rangle  (V_R \, - \langle V_R\rangle ) \right| \\
	& \leq & \frac{C}{\sqrt{R}}  \| U_R \, - \langle U_R\rangle  \|_{L^2 (\partial \omega)}  \|V_R \|_{L^2 (\partial \omega)} +  \frac{C}{\sqrt{R}} \|U_R \|_{L^2 (\partial \omega)} \|V_R\,  - \langle V_R\rangle  \|_{L^2 (\partial \omega)} \\
	& \leq &  C  \sqrt{R} \\
	& \to & 0 \ \mbox{ as } \ R \to 0,
\end{eqnarray*}
where $C$ denotes various positive constants along this computation. 

We next compute
\begin{equation}
	\label{eq:4.5}
 \left| \int_{\partial \omega} \frac{\kappa (y)\nu}{R} \left( V_R^2 \, - \langle V_R\rangle ^2\right) \right| \\
	\leq \frac{C}{R} \| V_R\,  - \langle V_R\rangle  \|_{L^2 (\partial \omega)}  \|V_R \, + \langle V_R\rangle \|_{L^2 (\partial \omega)} \\
	=  O (1),  \mbox{ as }  R \to 0.
\end{equation}
We point out that, although the integrals in the above two expressions are taken on $\partial \omega$, the quantity $\langle V_R \rangle$ still denotes the average of $V_R$ on the whole domain $\omega$.

Going back to the definition of $\widetilde{\mathcal{F}}_{R} (\alpha)$, we get that
\begin{eqnarray*}
	 \widetilde{\mathcal{F}}_R (\alpha) [U_R,V_R]
	& \geq &   - \int_{\partial \omega} \left(D\alpha^2+\partial_u g (y,0) \right)U_R ^2 -   \int_{ \omega} \left(d\alpha^2 +  \partial_v f (y,0)\right)V_R^2   \\
	& &   + \int_{\partial \omega} \kappa (y) \left( \mu U_R^2 - 2 \sqrt{\frac{ \mu \nu}{R}} U_R V_R + \frac{\nu}{R} V_R^2 \right) \\
	& \geq &  \int_{\partial \omega} \left(\kappa (y) \mu- D\a^2 -  \partial_u g (y,0)\right) \langle U_R\rangle^2 -   \int_{ \omega} \left(d\alpha^2 +  \partial_v f (y,0)\right) \langle V_R\rangle^2     \\
	& &    + \int_{\partial \omega}\kappa (y) \frac{ \nu}{R}  V_R^2  -  2 \int_{\partial \omega} \kappa (y)  \sqrt{\frac{\mu  \nu}{R}}\langle U_R\rangle \langle V_R\rangle   -   \delta_1 (R),
\end{eqnarray*}
where $\delta_1 (R) \to 0$ as $R \to 0$.

Now, assume by contradiction  that there exists a sequence $R_n$ converging to $0$ as $n\to+\infty$ and such that $\liminf_{n \to +\infty}  \| V_{R_n} \|_{L^2 (\omega)} > 0.$ 
Thanks to~\eqref{poinca}, this is equivalent to $\langle V_{R_n} \rangle \geq \varepsilon$ for some $\varepsilon >0$. Replacing $R$ by $R_n$ and continuing the previous computation, notice that all terms are bounded as $n \to +\infty$ except $- 2 \int_{\partial \omega} \kappa R_n^{-1/2} \sqrt{\mu \nu} \langle U_{R_n} \rangle \langle V_{R_n} \rangle$, which is of order $R_n^{-1/2}$, and $\int_{\partial \omega} \kappa R_n^{-1} \nu V_{R_n}^2$. The latter is equivalent to $\int_{\partial \omega}\kappa R_n^{-1} \nu \langle V_{R_n} \rangle^2$, which is larger than $\varepsilon^2 R_n^{-1} \int_{\partial \omega} \kappa \nu$. We conclude that $\widetilde{\mathcal{F}}_{R_n} (\alpha) [U_{R_n},V_{R_n}] \to +\infty$ as $n \to +\infty$, which  contradicts~\eqref{eq:limsup_R0}.

Therefore, we can assume that $ \|V_{R} \|_{L^2 (\omega)} \to 0$, which implies that $ \langle V_R \rangle \to 0$, as $R \to 0$. Thus, not only the left-hand side in~\eqref{eq:4.5} is bounded as $R\to 0$, but it actually converges to~$0$. We then have
\begin{align*}
  \widetilde{\mathcal{F}}_R (\alpha) [U_R ,V_R] & \geq   - \int_{\partial \omega} \left(D\alpha^2+ \partial_u g (y,0) \right) \langle U_R\rangle^2    -  \int_{ \omega} \left(d\alpha^2 +  \partial_v f (y,0)\right) \langle V_R\rangle^2  \\
& \quad  + \int_{\partial \omega}\kappa (y) \left( \sqrt{\mu} \langle U_R \rangle - \sqrt{\frac{\nu}{R}} \langle V_R \rangle \right)^2    -   \delta_2 (R)
\\
& \geq  - \int_{\partial \omega} \left(D\alpha^2+  \partial_u g (y,0)    \right) \langle U_R\rangle^2  -    \int_{ \omega} \left(d\alpha^2 +  \partial_v f (y,0)\right) \langle V_R\rangle^2    -   \delta_2 (R), 
\end{align*}
where $\delta_2 (R) \to 0 $ as $R \to 0$. Using again the fact that $\langle V_R\rangle \to 0$ as $R \to 0$, we get
\begin{equation*}
\liminf_{R \to 0 } \Lambda_R (\alpha)= \liminf_{R \to 0 } \widetilde{\mathcal{F}}_R (\alpha) [U_R ,V_R] = \liminf_{R \to 0 }  \left(-D\alpha^2 - \langle \partial_u g (0)\rangle\right) \langle U_R\rangle^2  \left|\partial \omega \right| .
\end{equation*}
Recalling that, by our normalization, $\int_{\partial \omega} U_R^2 = 1 -  \| V_R\|_{L^2 (\omega)} \to 1$ as $R \to 0$, and that $U_R \, -  \langle U_R \rangle \to 0$ as $R \to 0$ in $H^1 (\partial \omega)$, we get the desired inequality
$$\liminf_{ R\to 0} \Lambda_R (\alpha) \geq - D\alpha^2 - \langle \partial_u g (0) \rangle.$$
Putting this together with~\eqref{eq:limsup_R0}, we have proved that
\begin{equation}
	\label{eq:4.6}
\lim_{R\to 0}\Lambda_R (\alpha) \to - D \alpha^2 -  \langle \partial_u g (0 ) \rangle \qquad \text{for any $\alpha \geq 0$.}
\end{equation}
Finally, we point out that $\Lambda_R (\alpha)$ is nonincreasing with respect to $\alpha$ (recall that it is an even and concave function), and therefore by Dini's second theorem the convergence is also locally uniform in $\{\a\geq 0\}$.\\

Let us now turn to the consequences of the above limit on the speed $c^* (R)$ as $R \to 0$. Assume first that $\Lambda_R (0) <0$ for small $R >0$. Passing to the limit as $R \to 0$, this immediately implies that $\langle \partial_u g (0)\rangle \geq 0$, and in particular
$c^*_{\langle g \rangle} = 2 \sqrt{D \langle \partial_u g (0)\rangle}$
is well-defined. Conversely, if $\langle \partial_u g(0)\rangle<0$, then $\Lambda_R (0)>0$ for small $R$, which implies extinction by Theorem~\ref{th:spreading}.

We now go back to the case when $\Lambda_R (0) < 0$ for small $R >0$, and prove that $c^* (R) \to c^*_{\langle g \rangle}$ as $R \to 0$. In the case when $\langle \partial_u g (0) \rangle = 0$, then $-\Lambda_R (\alpha) \to  D\alpha^2$ and it is clear that $c^* (R) \to 0$ as $R \to 0$. Consider then the case when $\langle \partial_u g(0) \rangle>0$. Then, it is obvious that
$$c^*_{\langle g \rangle} = \min_{\alpha >0 } \frac{D\alpha^2 + \langle \partial_u g (0)\rangle }{\alpha},$$ 
where the minimum is reached at $\alpha^* = \sqrt{\frac{\langle \partial_u g(0)\rangle}{D}}$. Since
$$c^* (R) = \min_{\alpha >0} \frac{-\Lambda_R (\alpha)}{\alpha} \leq \frac{- \Lambda_R \left(\a^* \right)}{ \a^*},$$
passing to the limsup as $R \to 0$ we get from \eqref{eq:4.6} that
$$\limsup_{R \to 0 } c^* (R)\ \leq \limsup_{R \to 0 } \frac{- \Lambda_R \left(\a^* \right)}{ \a^*}= \frac{D\alpha^{*2} + \langle \partial_u g (0)\rangle }{\alpha^*}= c^*_{\langle g \rangle}.$$ 

The opposite inequality is proved similarly but relies on the locally uniform convergence of $\Lambda_R (\alpha)$ in $\{\alpha\geq 0\}$ as $R \to 0$. Indeed, take $\alpha_R >0$ where the minimum in the definition of $c^* (R)$ is reached. We claim that
$$0 < \liminf_{R \to 0} \alpha_ R \leq \limsup_{R \to 0} \alpha_R < + \infty.$$
This follows from the fact that we have already bounded $c^* (R)$ from above. Indeed, if (up to extraction of a subsequence) $\alpha_R \to 0$ as $R \to 0$, and thanks to the locally uniform convergence of $\Lambda_R (\cdot)$ as $R \to 0$, we would have that
$$\lim_{R \to 0} c^* (R) = \frac{-\Lambda_{R} (\alpha_R)}{\alpha_R} = +\infty.$$
Using the convexity of $-\Lambda_R (\alpha)$ and its locally uniform convergence to $D \alpha^2 + \langle \partial_u g(0)\rangle$, one can also find a contradiction if (up to extraction of a subsequence) $\alpha_R \to +\infty$ as $R \to 0$.

Therefore there exists some large $M>0$ such that $\alpha_R \in [1/M , M ]$ for any small $R$. Up to extraction of a subsequence, it converges to some $\alpha_0$ as $R \to 0$, and it follows that
\begin{equation*}
	\liminf_{R \to 0} c^* (R) = \liminf_{R \to 0} \frac{-\Lambda_R (\alpha_R)}{\alpha_R}= \frac{D\alpha_0^2 + \langle \partial_u g (0)\rangle }{\alpha_0} 
	 \geq \min_{\a>0}\frac{D\alpha^2 + \langle \partial_u g (0)\rangle }{\alpha} = c^*_{\langle g \rangle}.
\end{equation*}
This concludes the proof of Theorem~\ref{th:R0}.
\end{proof}

\subsection{Behaviour of $c^*$ as the size of the domain grows to $+\infty$}

As in the previous subsection, we consider the case of a rescaled domain $\omega_R$ and now let the parameter $R \to +\infty$. In this case the most favorable zone will be selected by the solution in order to maximize the spreading speed. This opens several possibilities in the heterogeneous case. Although a more general situation (involving a spatial dependence as in \eqref{eq:rescaled_f}) could be handled similarly as we show below, for simplicity we restrict ourselves to the case of homogeneous coefficients. That is, we consider \eqref{eq:evol} with
\begin{equation}\label{eq:homogeneous}
f(y,v) \equiv f(v) , \quad \kappa (y) \equiv 1 , \quad \mbox{ and } \ g(y,u) \equiv g(u).
\end{equation}
For $\Lambda_R(0)$ to be negative, at least when $R$ is large (see Proposition  \ref{prop:sufficient}), in this subsection we will assume
\begin{equation}\label{f'0}
f'(0) >0 .
\end{equation}
Thus, Theorem \ref{th:spreading} assures that spreading occurs. In order to state our result, we introduce the speed~$c^*_{\BRR}$ from the earlier papers of Berestycki, Roquejoffre and Rossi~\cite{BRR_plus,BRR_influence_of_a_line}.
More precisely, we recall the original road-field system on a half-space:
\begin{equation}\label{eq:evol_BRR}
\left\{
\begin{array}{ll}
\partial_t v = d \Delta v + f(v), & \quad \mbox{ for } (x',y') \in \R^{N} \times  \R_+^*, \ t >0 ,\vspace{3pt}\\
-d\, \partial_{y'} v_{|{y'=0}} =   \mu u - \nu v_{|{y'=0}} , & \quad \mbox{ for } x' \in \R^{N} , \ t >0 ,\vspace{3pt}\\
\partial_t u = D \Delta u + g(u) + \nu v_{|{y'=0}} - \mu u, & \quad \mbox{ for } x' \in \R^{N} , \ t>0.\vspace{3pt}\\
\end{array}
\right.
\end{equation} 
Actually, references~\cite{BRR_plus,BRR_influence_of_a_line} only dealt with the case $N=1$. However, as was shown in~\cite{RTV}, their results extend to an arbitrary dimension. Notice that in~\eqref{eq:evol_BRR}, the field/bulk is the half-space $\{ (x', y') \in \R^{N} \times \R \, | \ y ' > 0 \}$, while in~\eqref{eq:evol} the field/bulk is $\{ (x,y ) \in \R \times \R^N \, | \ x \in \R \mbox{ and } y \in  \omega \}$. For this reason, we use a different notation for the spatial variables in order to avoid any confusion.

The authors of~\cite{BRR_plus,BRR_influence_of_a_line} characterized the spreading speed of solutions $c^*_{\BRR}$, defined in a similar sense to the one of our Theorem~\ref{th:spreading}. Moreover, this speed was shown to satisfy 
$$c^*_{\BRR } 
\left\{
\begin{array}{ll}
= 2 \sqrt{d f'(0)} & \mbox{ if } \ D \leq D^*, \vspace{3pt}\\
> 2 \sqrt{d f'(0)} & \mbox{ if } \ D > D^*,
\end{array}
\right.
$$
where
\begin{equation}\label{BRR_threshold} 
D^* = d \left( 2  - \frac{g'(0)}{f'(0)} \right).
\end{equation}
In some sense, this means that the solution selects the most favorable zone, which may be either far from the road or in its vicinity. In the former case, the spreading speed is equal to $2 \sqrt{d f'(0)}$ which is precisely the speed of the field equation with no road. In the latter case, which occurs when $D$ is large enough, no explicit formula is available for the speed, but, through a geometrical characterization, it is shown to be strictly larger than the usual KPP speed and to depend on all the parameters from the equation on the road.
 
When the domain $\omega$ is bounded but very large, a similar situation occurs for problem~\eqref{eq:evol}. Indeed we show the following:
\begin{theo}\label{th:Rinfty}
Under the above assumptions, i.e., \eqref{eq:homogeneous} and~\eqref{f'0}, we have that
$$c^* (R) \to c^*_{\BRR}, \qquad \text{as $R \to +\infty$.}$$
\end{theo}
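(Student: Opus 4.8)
The plan is to reduce the statement to the convergence of the principal eigenvalues $\Lambda_R(\alpha)$ of the rescaled problem to a limiting eigenvalue $\Lambda_\BRR(\alpha)$ attached to the half-space problem~\eqref{eq:evol_BRR}, and then to transfer this convergence to the speeds exactly as in the proof of Theorem~\ref{th:R0}. First I would record the eigenvalue formulation of the half-space speed: separating the ansatz $e^{-\alpha(x_1'-ct)}$ in~\eqref{eq:evol_BRR} with a profile depending only on the normal coordinate $y'>0$, one obtains a one-dimensional coupled problem on $\R_+$ whose generalized principal eigenvalue $\Lambda_\BRR(\alpha)$ satisfies $c^*_\BRR=\min_{\alpha>0}\frac{-\Lambda_\BRR(\alpha)}{\alpha}$; this is the content of the characterization in~\cite{BRR_influence_of_a_line,BRR_plus}, extended to higher dimension in~\cite{RTV}, which I would use as a cited input rather than reprove. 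The matching of dimensions is exactly that the along-road directions $x_1',\dots,x_N'$ of~\eqref{eq:evol_BRR} correspond to the propagation variable $x$ together with the $N-1$ tangential directions along $\partial\omega_R$, the normal coordinate playing the role of $y'$. The key structural fact, used repeatedly, is that $\Lambda_\BRR(\alpha)\leq\Lambda_f(\alpha)=-f'(0)-d\alpha^2$, since the half-space admits the purely interior (constant) profile as a limiting competitor, and in particular $\Lambda_\BRR(0)\leq-f'(0)<0$ by~\eqref{f'0}.

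Next I would prove $\Lambda_R(\alpha)\to\Lambda_\BRR(\alpha)$ for each fixed $\alpha\geq0$. For the upper bound $\limsup_R\Lambda_R(\alpha)\leq\Lambda_\BRR(\alpha)$ I would use the Rayleigh formula of Proposition~\ref{prop:main}: fix a boundary point of $\omega_R$, flatten $\partial\omega_R$ in local coordinates (the curvature being $O(1/R)$), and plug in the test pair $U=\bar U_\alpha\,\chi$, $V=\phi_\alpha(y')\,\chi$, where $(\bar U_\alpha,\phi_\alpha)$ is a near-optimal half-space profile and $\chi$ is a cutoff of tangential width $\sqrt{R}$. Since the coupling term $\int_{\partial\omega_R}(\sqrt\mu U-\sqrt\nu V)^2$ carries no derivatives and $\phi_\alpha$ is controlled in the normal variable, numerator and denominator of the quotient scale identically and the quotient tends to $\Lambda_\BRR(\alpha)$; in the regime where $\Lambda_\BRR(\alpha)=\Lambda_f(\alpha)$ one uses interior constant test functions instead. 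This yields $\Lambda_R(\alpha)\leq\Lambda_\BRR(\alpha)+o(1)$.

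The lower bound $\liminf_R\Lambda_R(\alpha)\geq\Lambda_\BRR(\alpha)$ is the heart of the proof and I expect it to be the main obstacle. I would use an IMS-type localization: fix a scale $L$ with $1\ll L\ll R$ and a partition of unity $\sum_j\chi_j^2\equiv1$ on $\omega_R$ with $|\nabla\chi_j|=O(1/L)$, so that, using $\sum_j\chi_j\nabla\chi_j=0$,
\begin{equation*}
\mathcal{F}_R(\alpha)[U,V]\geq\sum_j\mathcal{F}_R(\alpha)[\chi_j U,\chi_j V]-\frac{C}{L^2}\left(\int_{\partial\omega_R}U^2+\int_{\omega_R}V^2\right),
\end{equation*}
the nonnegative coupling term localizing exactly because it is of order zero. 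Each interior cell is bounded below by the Neumann field eigenvalue $\Lambda_f(\alpha)$ (attained by constants), while each boundary cell, after flattening, is bounded below by the principal eigenvalue $\Lambda_{\BRR,L}(\alpha)$ of the half-space problem truncated to normal width $L$ with Neumann (free) conditions on the artificial cuts. Using $\Lambda_\BRR(\alpha)\leq\Lambda_f(\alpha)$ and $\sum_j(\int\chi_j^2U^2+\int\chi_j^2V^2)=1$, one obtains $\Lambda_R(\alpha)\geq\min\{\Lambda_f(\alpha),\Lambda_{\BRR,L}(\alpha)\}-C/L^2$; letting first $R\to\infty$, with curvature corrections $O(L/R)\to0$, and then $L\to\infty$, the truncated eigenvalue $\Lambda_{\BRR,L}(\alpha)$ converges to the bottom $\Lambda_\BRR(\alpha)$ of the half-space spectrum, which gives the claim. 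The delicate points are the flattening estimates justifying the passage $R\to\infty$ for fixed $L$ while controlling the trace appearing in the coupling term, and the convergence $\Lambda_{\BRR,L}(\alpha)\to\Lambda_\BRR(\alpha)$, which rests on the exponential decay of $\phi_\alpha$ when $\Lambda_\BRR(\alpha)<\Lambda_f(\alpha)$ and is immediate otherwise.

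Finally, I would upgrade the pointwise convergence $\Lambda_R\to\Lambda_\BRR$ to local uniform convergence on $\{\alpha\geq0\}$ using that all these functions are even and concave, as in Proposition~\ref{prop:concavity}, together with Dini's theorem, and then conclude on the speeds exactly as in Theorem~\ref{th:R0}. Since $\Lambda_\BRR(0)<0$, the minimizing exponents $\alpha_R$ in the definition of $c^*(R)$ stay in a fixed compact subset of $(0,+\infty)$ (bounded away from $0$ because $-\Lambda_R(\alpha)/\alpha\to+\infty$ as $\alpha\to0$, and bounded above because $c^*(R)$ is already controlled by the upper bound), so passing to the limit yields $c^*(R)\to\min_{\alpha>0}\frac{-\Lambda_\BRR(\alpha)}{\alpha}=c^*_\BRR$.
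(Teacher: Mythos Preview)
Your overall strategy --- reducing to the convergence $\Lambda_R(\alpha)\to\Lambda_{\BRR}(\alpha)$ and then transferring to the speeds via concavity and Dini's theorem --- is exactly the paper's, and the final step is carried out identically. Where you genuinely diverge is in how you prove the eigenvalue convergence. The paper does not use the Rayleigh formula at this stage: instead it normalizes the eigenfunction pair $(U_R,V_R)$ in $L^\infty$, shifts around a point where the maximum is attained, uses elliptic estimates to extract a subsequential limit $(U_\infty,V_\infty)$ solving the half-space eigenvalue system with some value $\Lambda_\infty(\alpha)$, and then runs two sliding (touching) arguments based on the strong maximum principle --- comparing $(U_\infty,V_\infty)$ with the eigenfunctions of the doubly truncated problems~\eqref{eq:eigen_BRR_trunc2} (with Dirichlet conditions on all artificial cuts) to get $\Lambda_\infty\leq\Lambda_{\BRR}$, and with the one-dimensional pair $(U_{\BRR},V_{\BRR})$, together with the exponential estimates~\eqref{eq:Vinfty_exp}--\eqref{eq:Vbrr_exp}, to get $\Lambda_\infty\geq\Lambda_{\BRR}$.

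Your route --- Rayleigh test functions for the upper bound and IMS localization for the lower bound --- is a legitimate alternative and arguably more transparent from a spectral-theoretic viewpoint, since it stays entirely inside the variational framework of Proposition~\ref{prop:main} and avoids any compactness argument on the eigenfunctions. The trade-off is this: the paper's sliding arguments engage $\Lambda_{\BRR}$ directly through its construction as the monotone limit of \emph{Dirichlet} truncations $\Lambda_{\BRR,L,L'}$, whereas your IMS lower bound naturally produces \emph{Neumann} truncations on the boundary cells. You therefore need the additional input that the Neumann-truncated cell eigenvalue also converges to $\Lambda_{\BRR}(\alpha)$ as $L\to\infty$ --- equivalently, that $\Lambda_{\BRR}(\alpha)$ coincides with the bottom of the spectrum of the half-space operator, not merely with the Dirichlet limit. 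This is true and can be extracted from the dichotomy you already mention (exponential decay of the profile when $\Lambda_{\BRR}(\alpha)<\Lambda_f(\alpha)$, and equality with $\Lambda_f(\alpha)$ otherwise), but it is a point you would have to argue explicitly; the paper circumvents it entirely by working with Dirichlet truncations and the maximum principle.
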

Let us immediately prove this theorem. The difficulty here is that, since the limiting problem is posed in an unbounded domain, it admits no equivalent to Proposition~\ref{prop:main}. Nonetheless, a notion of generalized principal eigenvalue can be introduced, which allows us to characterize $c^*_{\BRR}$.
\begin{proof}
	\emph{Step 1. Characterization of $c^*_{\BRR}$ in terms of a family of
		generalized principal eigenvalues.}
By adapting the arguments of~\cite{GMZ}, it can be shown that the spreading speed $c^*_{\BRR}$ of solutions of~\eqref{eq:evol_BRR} can characterized as
\begin{equation}
	\label{eq:c^*_BRR_min}
c^*_{\BRR}  = \min_{\alpha >0} \frac{-\Lambda_{\BRR} (\alpha)}{\alpha},
\end{equation}
where $\Lambda_{\BRR} (\alpha)$ is a concave and even function of $\alpha$ such that there exists a positive solution $(U,V) \in \mathbb{R}\times (C^2 (\mathbb{R}_+^*) \cap C^1 (\mathbb{R}_+) )$ of
\begin{equation*}\label{eq:eigen_BRR}
\left\{
\begin{array}{rcl}
-\left(D\alpha^2 +  g'(0)-  \mu\right)  U - \nu V (0) & = & \lambda U,  \vspace{3pt}\\
-d V''  (y') -  (d\alpha^2 +  f'(0) ) V (y')  & = & \lambda V (y'), \quad \mbox{ for } y' \in \mathbb{R}_+^*,\vspace{3pt}\\
-d V' (0)-\left(\mu U - \nu V (0)\right) & =&  0  , \vspace{3pt}\\
\end{array}
\right.
\end{equation*}
if $\lambda = \Lambda_{\BRR} (\alpha)$, but no positive solution exists if $ \lambda > \Lambda_{\BRR} (\alpha)$. Observe that, unlike in the case of a bounded domain, this is a generalized principal eigenvalue and therefore we do not have uniqueness a priori of the eigenvalue associated with a positive eigenfunction.

More precisely, $\Lambda_{\BRR} (\alpha)$ is constructed as the limit as $L \to +\infty$ of the principal eigenvalue of the following truncated problem:
\begin{equation}\label{eq:eigen_BRR_trunc1}
\left\{
\begin{array}{rcl}
-\left(D\alpha^2 +  g'(0)-  \mu \right) U - \nu V (0) & = & \Lambda_{\BRR,L} (\alpha) U,  \vspace{3pt}\\
-d  V'' (y') -\left(d\alpha^2 + f'(0) \right) V  (y') & = &  \Lambda_{\BRR,L} (\alpha) V (y'), \quad  \mbox{ for } y' \in (0,L),\vspace{3pt}\\
 -d V' (0 )-\left(\mu U - \nu V (0)\right)  & =& 0  , \vspace{3pt}\\
V (L) & = & 0.
\end{array}
\right.
\end{equation}
Here, the domain is bounded so that the notion of a principal eigenvalue can be understood in the classical sense, i.e., it is the smallest eigenvalue and the associated eigenfunction pair is positive. We refer to~\cite{GMZ} for a rigorous construction of the above eigenvalues as well as a spreading result in the spatially periodic framework, which includes the homogeneous system as a particular case (once, again, the restriction on the dimension in~\cite{GMZ} could be lifted).

Using \eqref{eq:c^*_BRR_min}, in order to prove the theorem it is enough to show that $\Lambda_R(\alpha)$ converges to $\Lambda_{\BRR}(\alpha)$ as $R\to+\infty$, locally uniformly in $\{\a>0\}$. To do so, we consider, in a similar fashion as above (see also~\cite{BDR19} for the general heterogeneous case but with~$\alpha = 0$), the following principal eigenvalue problem with an alternative truncation: 
\begin{equation}\label{eq:eigen_BRR_trunc2}
\left\{
\begin{array}{rcll}
-D \Delta U  - \left(D\alpha^2 +  g'(0)-  \mu \right) U - \nu V_{|{y'=0}} & = & \Lambda_{\BRR,L,L'} (\alpha) U ,  & \quad \mbox{ in } B_{L'},\vspace{3pt}\\
- d \Delta  V   + \left(d\alpha^2 +  f'(0) \right) V & = &  \Lambda_{\BRR,L,L'} (\alpha) V,& \quad \mbox{ in }  B_{L'} \times (0,L),\vspace{3pt}\\
-d\, \partial_{y'} V_{|{y'=0}}-\left(\mu U - \nu V_{|{y'=0}}\right) & =&   0  , & \quad \mbox{ in } B_{L'} ,\vspace{3pt}\\
U_{| \partial B_{L'}}  \equiv V_{| \partial B_{L'} \times [0,L] } \equiv V (\cdot, L) & \equiv & 0,
\end{array}
\right.
\end{equation}
where $L,L'>0$ and $B_{L'} \subset \R^{N-1}$ denotes the ball of radius~$L'$ and centered at 0. Notice that the first variable $x'$ now belongs to $\R^{N-1}$, hence the spatial dimension has been reduced compared with the original evolution problem~\eqref{eq:evol_BRR}. This comes from the fact the first component of $x'$ in~\eqref{eq:evol_BRR} stands for the direction of the propagation and disappears in the eigenvalue problem on a cross section.

Then, one can find that
\begin{equation*}
\lim_{L' \to +\infty} \Lambda_{\BRR,L, L'} (\alpha) = \Lambda_{\BRR ,L} (\alpha).
\end{equation*}
In order to prove this relation, one must first use a strong maximum principle to infer that $\Lambda_{\BRR,L,L'} (\alpha) \geq \Lambda_{\BRR,L} (\alpha)$ and also that $\Lambda_{\BRR,L,L'}$ is decreasing with respect to $L'$. In particular, $\lim_{L ' \to +\infty } \Lambda_{\BRR,L,L'} (\alpha)$ exists and it is larger than or equal to $\Lambda_{\BRR,L} (\alpha)$. The opposite inequality can be found by taking a limit of the positive eigenfunction of \eqref{eq:eigen_BRR_trunc2}, up to some shift around the maximum point and a suitable normalization, to find a positive and bounded eigenfunction in the whole strip. By another strong maximum principle, one can deduce that $\lim_{L' \to +\infty} \Lambda_{\BRR,L,L'} (\alpha) \leq \Lambda_{\BRR,L} (\alpha)$. We omit the details of this relatively standard argument.

To sum up this part, we have that
\begin{equation}\label{eq:approximating_eig}
	\Lambda_{\BRR} (\alpha) = \lim_{L \to +\infty} \Lambda_{\BRR,L} (\alpha) = \lim_{L \to +\infty} \lim_{ L' \to +\infty} \Lambda_{\BRR ,L,L'} (\alpha).
\end{equation}

\emph{Step 2. Construction of the limit of $\Lambda_R(\alpha)$ as $R\to+\infty$.}
Now, we fix $\alpha \geq 0$ and take an eigenfunction pair $(U_R,V_R)$ associated with~$\Lambda_R (\alpha)$. Here, we normalize it so that 
$$\max \left\{\max_{\partial \omega_R} U_R , \max_{\partial \omega_R} V_R \right\} = 1.$$
Recall that $\Lambda_R (\alpha)$ can be characterized as in~\eqref{eq:R_eigen1}. In particular, it can be bounded from above uniformly with respect to $R$, by taking the ansatz $(U,V)=\gamma\left(1,\sqrt{\frac{R \mu}{\nu}}\right)$ with $\gamma = \left(|\partial \omega| + R \frac{\mu}{\nu} |\omega | \right)^{-1/2}$.
Noticing that the terms depending on $R$ in the definition of $\widetilde{\mathcal{F}}_R (\alpha)$ are nonnegative, we then conclude that $\Lambda_R (\alpha)$ can be bounded uniformly with respect to $R$ also from below.

Next, since $\partial \omega_R = R \partial \omega$ with $\partial \omega$ a bounded hypersurface, we can choose a subsequence $R_n \to +\infty$ such that $\Lambda_{R_n} (\alpha)$ converges to some value, denoted by $\Lambda_\infty(\alpha)$,
as $n \to +\infty$, as well as
$$\max \left\{\max_{\partial \omega_{R_n} } U_{R_n} , \max_{\partial \omega_{R_n}} V_{R_n} \right\} = \max \left\{ U_{R_n} (R_n y_n), V_{R_n} (R_n y_n) \right\},$$
with $y_n \in \partial \omega$ converging to some $y_\infty \in \partial \omega$ as $n \to +\infty$.

Recall also that $\partial \omega$ is a smooth hypersurface. Then, by standard elliptic estimates and up to extraction of another subsequence, we can find that $V_{R_n} (y+ R_n y_n)$ and $U_{R_n} (y + R_n y_n)$ converge to some functions $V_\infty (y)$ and $U_\infty (y)$ respectively defined in the half-space $\{y \cdot n_\infty \leq 0\}$ and the hyperplane~$\{y \cdot n_\infty  = 0\}$ of~$\mathbb{R}^N$, where $n_\infty$ is the outer unit normal vector of $\omega$ at $y_\infty$. Then, making the change of variables
$$y' = - y \cdot n_\infty \mbox{ and } x' = y +  y' n_\infty,$$
we see that the pair $\left(U_\infty,V_\infty\right)$ satisfies the system
$$
\left\{
\begin{array}{rcll}
-D \Delta U_\infty - \left(D\alpha^2 +  g'(0)-  \mu \right) U_\infty - \nu V_{\infty}|_{y'=0} &=&\Lambda_\infty(\alpha) U_\infty,  & \quad \mbox{ in } \mathbb{R}^{N-1},\vspace{3pt}\\
- d \Delta V_\infty  - \left(d\alpha^2 +  f'(0) \right) V_\infty  &= &\Lambda_\infty(\alpha) V_\infty,& \quad \mbox{ in }  \mathbb{R}^{N-1} \times \R^*_+,\vspace{3pt}\\
-d\, \partial_{y'} V_{\infty}|_{y'=0} -\left(\mu U_\infty - \nu V_{\infty}|_{y'=0}\right) &= &  0  , & \quad \mbox{ in } \R^{N-1} .\vspace{3pt}\\
\end{array}
\right.
$$
Moreover, we have by construction that, in these new variables,
$$\max \{ U_\infty (0) , V_\infty (0,0) \}= 1.$$
We also have that $U_\infty,V_\infty \geq 0$ and, thanks to a strong maximum principle and Hopf lemma, the strict inequalities $U_\infty>0 $ in $\R^{N-1}$ and $V_\infty >0$ in~$\R^{N-1} \times [0,+\infty)$ hold true.

\emph{Step 3. Proof of $\Lambda_\infty(\alpha) \leq \Lambda_{\BRR} (\alpha).$}
Since~$U_{\BRR,L,L'}$ and $V_{\BRR,L,L'}$, the eigenfunctions of~\eqref{eq:eigen_BRR_trunc2}, have compact support for any positive $L,L'$, this, together with the properties of~$\left(U_\infty,V_\infty\right)$ established at the end of the previous step,  allows us to define
$$\theta^* := \sup \{ \theta \geq 0  \ | \ \theta \left(U_{\BRR,L,L'} , V_{\BRR,L,L'}\right) < (U_\infty, V_\infty) \} \in (0,+\infty).$$
Moreover, necessarily $\left(U_\infty - \theta^* U_{\BRR,L,L'}, V_\infty - V_{\BRR,L,L'}\right)$ is nonnegative and there exists a point where at least one of the components vanished. Assume for instance that there exists some point $x_0 ' \in \mathbb{R}^{N-1}$ such that $U_\infty (x_0 ') =  \theta^* U_{\BRR,L,L'} (x_0 ')$. Since $U_\infty >0$, we have that $x_0 '$ belongs to the interior of the support of $U_{\BRR,L,L'}$, and, evaluating both equations at this point, we find that
\begin{eqnarray*}
0 & \geq & -D \Delta \left(U_\infty - \theta^* U_{\BRR,L,L'}\right) (x_0 ') - \left(D\alpha^2 +  g'(0)-  \mu \right) \left(U_\infty - \theta^*  U_{\BRR,L,L'}\right) (x_0 ') \\
& = &  \nu\left( V_\infty (\cdot,0) - \theta^* V_{\BRR,L,L'}\right) (x_0 ') + \Lambda_\infty(\alpha) U_\infty (x_0 ')-  \theta^* \Lambda_{\BRR,L,L'} (\alpha) U_{\BRR,L,L'} (x_0 ') \\
& \geq & \left(\Lambda_\infty(\alpha) - \Lambda_{\BRR,L,L'} (\alpha) \right) U_\infty (x_0 ').
\end{eqnarray*}
Therefore
$$\Lambda_\infty(\alpha) \leq \Lambda_{\BRR,L,L'} (\alpha).$$
Using a similar argument, one reaches the same conclusion if $U_\infty - \theta^* U_{\BRR,L,L'}$ is positive everywhere and $V_\infty - \theta^* V_{\BRR,L,L'}$ vanishes at some point.

Since $L$ and $L'$ were arbitrary positive constants, we conclude, thanks to \eqref{eq:approximating_eig}, that
$$\Lambda_\infty(\alpha) \leq \Lambda_{\BRR} (\alpha).$$

\emph{Step 4. Proof of $\Lambda_\infty(\alpha) \geq \Lambda_{\BRR} (\alpha).$} We proceed by contradiction and assume that
\begin{equation}\label{eq:Rinfty_contrad}
\Lambda_\infty(\alpha) < \Lambda_{\BRR} (\alpha).
\end{equation}
Let us first show that
\begin{equation}\label{eq:Rinfty_step0}
\| V_\infty \|_{L^\infty \left(\R^{N-1} \times \R_+ \right)} < +\infty.
\end{equation}
We already know by construction that $V_\infty \leq 1$ on $\R^{N-1} \times \{ 0\}$. It follows by standard elliptic estimates that $V_\infty$ is also bounded on $\R^{N-1} \times [0,K]$, for any $K>0$. Therefore, if~\eqref{eq:Rinfty_step0} is not true, then going back to the sequence of eigenfunctions $V_{R_n}$ this means that $\max_{\omega_{R_n}} V_{R_n}$ goes to~$+\infty$ as $n \to +\infty$, and that this maximum is reached at some point $\tilde{y}_n$ such that $d (\tilde{y}_n , \omega_{R_n})$, the distance between $\tilde{y}_n$ and $\omega_{R_n}$, goes to $+\infty$ as $n \to +\infty$. Then, denoting
$$\tilde{V}_n (y ) = \frac{V_{R_n} (\tilde{y}_n +y)}{V_{R_n} (\tilde{y}_n)},$$
and passing to the limit as $n \to +\infty$ thanks to standard elliptic estimates, one finds a positive solution~$\tilde{V}_\infty$ of
$$-d \Delta \tilde{V}_\infty - \left(d \alpha^2 + f'(0)\right) \tilde{V}_\infty = \Lambda_\infty(\alpha) \tilde{V}_\infty, \quad \mbox{ in } \ \R^{N}.$$
By construction, $\tilde{V}_\infty$ also satisfies
$$\max \tilde{V}_\infty = 1 = \tilde{V}_\infty (0).$$
Evaluating the above elliptic equation at 0, we find that
$$-d \alpha^2 - f'(0) \leq \Lambda_\infty(\alpha).$$
However, we recall from Proposition~5.1 in~\cite{GMZ} that 
\begin{equation}\label{eq:Pr5.1GMZ}
	\Lambda_{\BRR} (\alpha) \leq -d \alpha^2 - f'(0).
	\end{equation}
Therefore, we have reached a contradiction with~\eqref{eq:Rinfty_contrad}, hence~\eqref{eq:Rinfty_step0} is proved.

Using again~\eqref{eq:Pr5.1GMZ} and~\eqref{eq:Rinfty_contrad}, we now let $\beta >0$ such that
$$\Lambda_\infty(\alpha) + d\alpha^2 +  f'(0) = - d \beta^2.$$
Then, for any $A >0$ and $B >0$, we have that
$$\overline{V} (t,x',y') := A e^{- d\beta^2 t} + B e^{-\beta y'}$$
satisfies
$$\partial_t \overline{V} - d \Delta \overline{V} - \left(d\alpha^2 + f'(0)\right) \overline{V} = \Lambda_\infty(\alpha) \overline{V} \quad \mbox{ for } (x',y') \in \mathbb{R}^{N-1} \times \mathbb{R}_+^*, \ t >0 .$$
Thanks to \eqref{eq:Rinfty_step0}, we can choose $A$ and $B$ large enough so that $\overline{V}{|_{t=0}} > V_\infty $ for all $(x',y') \in \R^{N-1} \times [0,+\infty)$ and $\overline{V}{|_{y'=0}} > V_\infty{|_{y'=0}}$ for all $t>0$ and $x' \in \R^{N-1}$. Applying a comparison principle and passing to the limit as $t \to +\infty$,
we find that
\begin{equation}\label{eq:Vinfty_exp}
V_\infty (x' ,y') \leq B e^{-\beta y'}.
\end{equation}
Then we recall that the function $V_{\BRR}$ associated with the principal eigenvalue problem~\eqref{eq:eigen_BRR_trunc1} satisfies the linear ODE
$$- d V ''  - \left( d\alpha^2 + f'(0)\right) V = \Lambda_{\BRR} (\alpha) V \qquad \text{for }  y' \in \mathbb{R}_+^*.$$
Since $V_{\BRR}$ is positive, we obtain again condition \eqref{eq:Pr5.1GMZ}; moreover, $V_{\BRR}$ must satisfy 
\begin{equation}\label{eq:Vbrr_exp}
V_{\BRR} (y) \geq V_{\BRR} (0) e^{-\beta_0 y'},
\end{equation}
where $0 \leq \beta_0 < \beta$ is such that $d\beta_0^2 + d \alpha^2 + f'(0) + \Lambda_{\BRR} (\alpha)= 0$. Together with~\eqref{eq:Vinfty_exp}, this allows us to define
$$\eta^* := \sup \{ \eta \geq 0 \ | \ \eta (U_\infty,V_\infty) < (U_{\BRR},V_{\BRR}) \} \in (0,+\infty).$$
Then $\left(U_{\BRR} - \eta^* U_\infty, V_{\BRR} - \eta^* V_\infty\right)$ is nonnegative and there exists a sequence $(x_n ', y_n ') \in \R^{N-1} \times [0,+\infty)$ such that, as $n \to +\infty$,
$$\mbox{either} \ \ \left(U_{\BRR} - \eta^* U_\infty\right) (x_n ') \to 0 \ \mbox{ or } \ \left(V_{\BRR} - \eta^* V_\infty\right)  (x_n ',y_n ') \to 0.$$
In the latter case, using again~\eqref{eq:Vinfty_exp} and~\eqref{eq:Vbrr_exp}, we find that the sequence $y_n '$ must be bounded. If the sequence $(x_n ')_n$ is also bounded, then we find some point $(x_\infty ',y_\infty ') \in \mathbb{R}^{N-1} \times [0,+\infty)$ such that 
$$\mbox{either} \ \ \left(U_{\BRR} - \eta^* U_\infty\right) (x_\infty ') = 0 \ \mbox{ or } \ \left(V_{\BRR} - \eta^* V_\infty\right)  (x_\infty ',y_\infty ')  =0.$$
In both cases, by a strong maximum principle one reaches a contradiction with~\eqref{eq:Rinfty_contrad}.

The case when $(x_n ')_n $ is not bounded can be treated similarly, up to taking yet another shifting sequences $U_n (x) =\left( U_{\BRR} - \eta^* U_\infty\right) (x + x_n ')$ and $V_n (x,y) = \left( V_{\BRR} - \eta^* V_\infty\right) (x+x_n ',y)$ beforehand.

In any case, we conclude that $\Lambda_\infty(\alpha) = \Lambda_{\BRR} (\alpha)$. Since this limit does not depend on the choice of the sequence $R_n $, we find that
$$\Lambda_R (\alpha) \to \Lambda_\infty(\alpha) \ \mbox{ as } \ R \to +\infty.$$
Thanks to Dini's theorem, we also know that this convergence is locally uniform with respect to $\alpha$. Proceeding as in the end of the proof of Theorem~\ref{th:R0}, it is now straightforward that $c^* (R) \to c^*_{\BRR}$ as $R \to +\infty$. 
\end{proof}

\section{Numerical approximation of the eigenvalue problem and the spreading speed}\label{sec:numeric}

The objective of this section and the next one is to investigate numerically the behaviour of the spreading speed, in particular with respect to changes made to the geometry of the domain $\omega$. In order to achieve this, the following issues first need to be addressed:
\begin{enumerate}[label=(\roman*),itemsep=0pt,topsep=0pt]
	\item solving  the eigenvalue problem \eqref{coupled-Eig-Pb} numerically for fixed $\alpha$ and $\omega$;
	\item finding the $\alpha$ which minimizes $-\Lambda(\alpha)/\alpha$ for a fixed domain $\omega$, which, thanks to~\eqref{spreading-speed}, allows us to compute the spreading speed.
\end{enumerate}

Once this is done, we will validate our numerical method by comparing our results with those obtained analytically in~\cite{RTV}. There, the case of a cylinder with spherical section and homogeneous coefficients has been considered, and the spreading speed has been characterized not by means of the eigenvalue approach followed in this work, but as a solution of a finite dimensional system. We will restrict ourselves to the case of a two dimensional section~$\omega$, though our approach also works in higher dimensions.

Finally, we will conclude this section by giving a numerical counterexample to the monotonicity of the spreading speed with respect to diffusion, which completes our discussion in Section~\ref{sec:D_counter}.

\subsection{Numerical resolution of the coupled eigenvalue problem}

Here we explain how we find numerically the solutions of the eigenvalue problem~\eqref{coupled-Eig-Pb} when all parameters are fixed. 
First, let us note that the unknown functions $U$ and $V$ are in different spaces: $U$ is defined on $\partial \omega$, while $V$ is defined in the whole~$\omega$. In order to couple these quantities numerically, it is more convenient to define $U$ also in the bulk, which we will do by considering its harmonic extension in~$\omega$.

Therefore, we introduce the functional space of $H^1 (\omega)$ functions whose trace on $\partial \omega$ is also $H^1$:
$$E (\omega) := \{ \varphi \in H^1 (\omega) \,  | \, \varphi_{| \partial \omega} \in H^1 (\partial \omega) \}.$$
Then, our test space will be
$$ E (\omega) \times H^1 (\omega).$$
More precisely, for any $\alpha \geq 0$ we look for $W =(U,V) \in E (\omega) \times H^1 (\omega)$ such that
\begin{equation}
 A_\varepsilon(\alpha)( W , \Psi ) = \lambda_\varepsilon (\alpha) B(W , \Psi) 
 \label{variational-form}
 \end{equation}
for any $\Psi  = (\varphi, \psi)$ in the test space $E (\omega) \times H^1 (\omega)$. Here the bilinear forms $A_\varepsilon(\alpha)$ and $B$ are given by
\begin{align*}
A_\varepsilon(\alpha)(W , \Psi ) & = d\int_\omega \nabla V \cdot  \nabla \psi - \int_{\omega}\left(d\alpha^2 + \f\right)  V \psi  - \int_{\partial \omega}\ka \left( \sqrt{\mu \nu } U - \nu  V \right) \psi   \\  
& \quad - \int_{\partial \omega}\left(D\alpha^2 + \g -\ka\mu\right)U \varphi -\int_{\partial \omega}\ka\sqrt{\mu\nu}  V \varphi   \\
& \quad + D\int_{\partial \omega} \nabla_\tau U \cdot \nabla_\tau \varphi   + \varepsilon\int_\omega \nabla U \cdot \nabla \varphi ,
\end{align*}
where $\varepsilon >0$ will be chosen small, and 
\[ B(W , \Psi ) = \int_{\partial \omega} U \varphi + \int_\omega V \psi . \]
Notice that, since we have extended $U$ to the domain $\omega$, we now distinguish $\nabla U$, the gradient of $U$ as a function of $H^1 (\omega)$, and $\nabla_\tau U$, the gradient of $U$ as a function of $H^1 (\partial \omega)$. Note also that, when $\varepsilon=0$, the term containing the extended part of $U$ in $\omega$ vanishes. In this case, supposing $(U,V) \in H^1(\partial \omega)\times H^1(\omega)$, we obtain the variational formulation associated to~\eqref{coupled-Eig-Pb}. We use the abuse of notation $A_0(\alpha)$ when we refer to this case.  	

It is not difficult to see that this new variational formulation leads to a system similar to~\eqref{coupled-Eig-Pb} with two differences: an additional equation for $U$ in $\omega$, and an additional term in the boundary eigenvalue equation for $U$:
\begin{equation}
\left\{
\begin{array}{rcll}
-D \Delta_\tau  U - \left(D\alpha^2 +  \partial_u g (y,0) - \kappa (y) \mu\right) U - \kappa (y) \sqrt{\mu \nu} V  + \varepsilon \partial_n U & = & \lambda_\varepsilon (\alpha) U, & \quad \mbox{ on } \partial \omega,\vspace{3pt}\\
-d \Delta V - \left(d\alpha^2 + \partial_v f (y,0) \right) V  & = & \lambda_\varepsilon (\alpha) V, & \quad \mbox{ in }  \omega,\vspace{3pt}\\
d \partial_n V-\kappa(y) \left( \sqrt{\mu \nu } U - \nu V \right) & =& 0  , & \quad \mbox{ on } \partial \omega ,\vspace{3pt}\\
-\varepsilon\Delta U & = & 0, & \quad \mbox{ in } \omega .\vspace{3pt}\\
 U>0 \text{ and } V>0,  &  & & \quad \text{ in } \omega .
\end{array}
\right.
\label{coupled-Eig-Pb-numeric}
\end{equation}
By reasoning as in Section~\ref{section2}, it is possible to prove that there exists a unique $\lambda_\varepsilon (\alpha)$ such that the above eigenvalue problem admits a positive eigenfunction, which is also unique up to multiplication by a positive factor. Furthermore, as $\varepsilon \to 0$, the eigenvalue $\lambda_\varepsilon (\alpha)$ converges to~$\Lambda (\alpha)$, and the associated eigenfunction pair also converge (weakly in $H^1 (\partial \omega) \times H^1 (\omega)$ and strongly in $L^2 (\partial \omega) \times L^2 (\omega)$) to that of problem~\eqref{coupled-Eig-Pb}.

With this variational formulation it is possible to use classical finite element methods in order to find numerical solutions of \eqref{variational-form}. In practice, the two dimensional domain $\omega$ is meshed and $\mathbb{P}_1$ or $\mathbb{P}_2$ elements are used in order to build the numerical approximation. Our technical numerical work (meshing, assembly, resolution of the resulting system of equations) has been done with the software FreeFEM~\cite{freefem}. 

\subsection{Efficient computation of the spreading speed}

The spreading speed \eqref{spreading-speed} is given as the minimum of $s(\alpha) := -\Lambda(\alpha)/\alpha$ for $\alpha>0$. By Proposition~\ref{prop:concavity}, we know that this minimum exists and is uniquely reached for some value~$\alpha^*$, provided that $\Lambda (0) < 0$. Due to the concavity of $\Lambda (\alpha)$, any bracketing algorithm will converge linearly to $\alpha^*$. Yet, for each tested $\alpha$, the eigenvalue problem needs to be solved, which is numerically costly. Instead, thanks to Proposition~\ref{prop:concavity}, one can differentiate~$\Lambda$, hence~$s$. This allows us to use a hybrid approach combining a gradient descent with the secant method to achieve a super-linear rate of convergence. More precisely, after one step given by the gradient descent, the next iterate is either computed by the secant method or by another gradient descent step method. The criterion for accepting the iterate given by the secant method is the decrease of the objective function.

For numerical purposes, we actually consider the quantity $s_\varepsilon(\alpha) = -\lambda_\varepsilon(\alpha)/\alpha$, with $\varepsilon = 10^{-6}$ and $\lambda_\varepsilon(\alpha)$ the approximate eigenvalue defined in the previous section. However, the concavity, differentiability and existence of a minimizer for $\alpha \mapsto s_\varepsilon(\alpha)$ follow with the same arguments used for $s(\alpha)$ in Proposition~\ref{prop:concavity}. Therefore, all the above considerations still apply.

\subsection{Some initial numerical experiments}
\label{sec:initial-experiments}

In order to validate the algorithm proposed in this work, we compare the results it provides with those obtained in \cite{RTV}, where the case of the disk, homogeneous $f$ and $g\equiv 0$ was studied analytically. More precisely, in~\cite{RTV}, it was shown that $c^*$ is the smallest value of a parameter for which a certain algebraic system admits a real solution. Using this system, one can compute the spreading speed to machine precision in the aforementioned setting.

To test our algorithm, we have taken the following values of the parameters: $\mu=\nu =1$, $\f \equiv 0.5$ and $\g\equiv0$. We have considered two cases, corresponding to different values of $D$ with respect to $2d$, which is the threshold determined in~\cite{RTV} that separates different behaviours of $c^*$ as a function of the radius of the disk. In the first case, for $d=1$ and $D=1.5$ so that $D<2d$, computations are made for disks of radius $R \in [0.13,50]$ with increment of $0.1$. The computations are made using meshes with approximately $1000$ nodes and $\mathbb P_1$ finite elements. For the case $D>2d$, instead, we have used $d=1$ and $D=3$ and the same radii. This comparison is summarized in Table \ref{tab:CompVariableR}. The precision obtained on $\alpha^*$, the point where $s(\alpha)$ is minimized, is of the same order as the precision obtained for the speed.

\begin{table}
	{ 
		\centering
		
		\begin{tabular}{|c|c|c|c|}
			
			\hline 
			
			Nodes & $h$ & Value of $c^*$  & Rel. error \\ \hline 
			
			928 & 0.11 & 0.9923449724  & 1.5e-4 \\ \hline
			
			3561 & 0.06 & 0.9923066335 & 3.8e-5 \\ \hline 
			
			7974 & 0.04 & 0.9923279467 & 1.7e-5 \\  \hline
			
			14075 & 0.028 & 0.9923353817 & 9.7e-6  \\ \hline
			
			21870 & 0.022 & 0.9923388328 & 6.2e-6 \\
			
			\hline 
			
		\end{tabular} \quad 
		\begin{tabular}{|c|c|c|c|}
			
			\hline 
			
			Nodes & $h$ & Value of $c^*$  & Rel. error \\ \hline 
			
			923 & 0.1 & 1.287934003  & 1.1e-4 \\ \hline
			
			3548 & 0.05 & 1.288045457 & 2.9e-5 \\ \hline 
			
			7994 & 0.036 & 1.288066083 & 1.3e-5 \\  \hline
			
			14068 & 0.032 & 1.288073276 & 7.2e-6  \\ \hline
			
			21828 & 0.024 & 1.288076614 & 4.6e-6 \\
			
			\hline 
			
		\end{tabular}
		
		\caption{Comparison of spreading speeds with known results in \cite{RTV} in the case of the unit disk. The left table corresponds to $d=1,D=1.5$ and the right one to $d=1,D=3$. The parameter~$h$ denotes the mesh size.}
		
		\label{tab:CompMeshSizes}
	}
\end{table}

\begin{table}

	\centering

	\begin{tabular}{|c|c|}

		\hline

		& Max. relative error for $c^*$ \\

		\hline 

		$d=1, D=1.5$ & 2e-4     \\ \hline 

		$d=1, D=3$ & 2e-4 \\ 

		\hline 

	\end{tabular}

	\caption{Comparison of the numerically computed spreading speeds with known results in~\cite{RTV} in the case of disks of variable radius $R \in [0.13,50]$ for meshes with approximately $1000$ nodes.}

	\label{tab:CompVariableR}

\end{table}

We have also performed a comparison for $R=1$ and different mesh sizes. The results are summarized in Table \ref{tab:CompMeshSizes}, where $h$ denotes the size of the mesh elements. For $d=1$ and $D=1.5$, the numerical values obtained are compared to the value $0.9923449724$ from \cite{RTV}. For $d=1$ and $D=3$, the numerical values obtained are compared to the value $1.288082554$ from~\cite{RTV}. It can be observed that, as the size of the meshes decreases, the values of the speed converge to the values obtained with the methods from \cite{RTV}. This analysis confirms that the method proposed in this work is capable of approximating the spreading speed accurately. The main advantage of the new method proposed here is the fact that it is not limited to disks, but can be applied to arbitrarily general shapes.\\

Next, we give a numerical counterexample where the speed is not increasing with respect to~$D$. As we have seen in Section~\ref{section:c^*_D}, such a situation can only occur in the non-radial case. In particular, we have proved that increasing $D$ may lead to extinction. Now, we numerically show  that the monotonicity of the $c^*$ with respect to the diffusion $D$ may not hold true even when the speed is always positive, i.e., when condition~\eqref{instability} holds. 

In this simulation, we first choose $\omega \subset \R^2$ the disk of radius 1. In particular, here we denote the spatial variable $y= (y_1,y_2)$. We also fix the following parameters
$$\kappa \equiv 1 = \mu = \nu = d ,$$
and choose
$$f(y,v) \equiv f(v) := 0.8 \, v \left(1-v\right).$$
The choice of the value 0.8 comes from the fact that the speed associated with the equation in the bulk with the Robin boundary condition
$ \partial_n v = - v$
is negative. In some sense, this means that the solution cannot persist if the component~$u$ on the surface is too small. Finally, we assume the function $g$ to be non-radial:
$$g(y,u) := \left(y_1 - 0.8\right)  u.$$
Due to this heterogeneity, the mass of the surface eigenfunction should be mostly located on some part of the unit circle. Putting these facts together, one may expect that  the term $D \int_{\partial \omega } |\nabla U |^2$ should heavily weigh in the formulas~\eqref{eq:Rayleigh} and~\eqref{eq:F} which are used to compute~$\Lambda$, and this is confirmed numerically. 

First of all, we observe numerically that, as $D \to +\infty$,
$$\Lambda_D (0) \to 1.66 \cdot 10^{-3} >0.$$
Recalling that $\Lambda_D (0)$ is decreasing with respect to $D$ (see Remark~\ref{rmk:several}), this suggests that~\eqref{instability} holds. 

Then, we perform numerical computations of the spreading speed for parameters~$D$ corresponding to a discretization of $[0.1, 100]$ with a step equal to~$0.1$. We find that the speed is always positive and we obtain the results shown in Figure~\ref{fig:counter-example}. It can be clearly noticed that the spreading speed is not monotone with respect to $D$ in this case.

\begin{figure}
	\centering
	\begin{overpic}[width=0.5\textwidth]{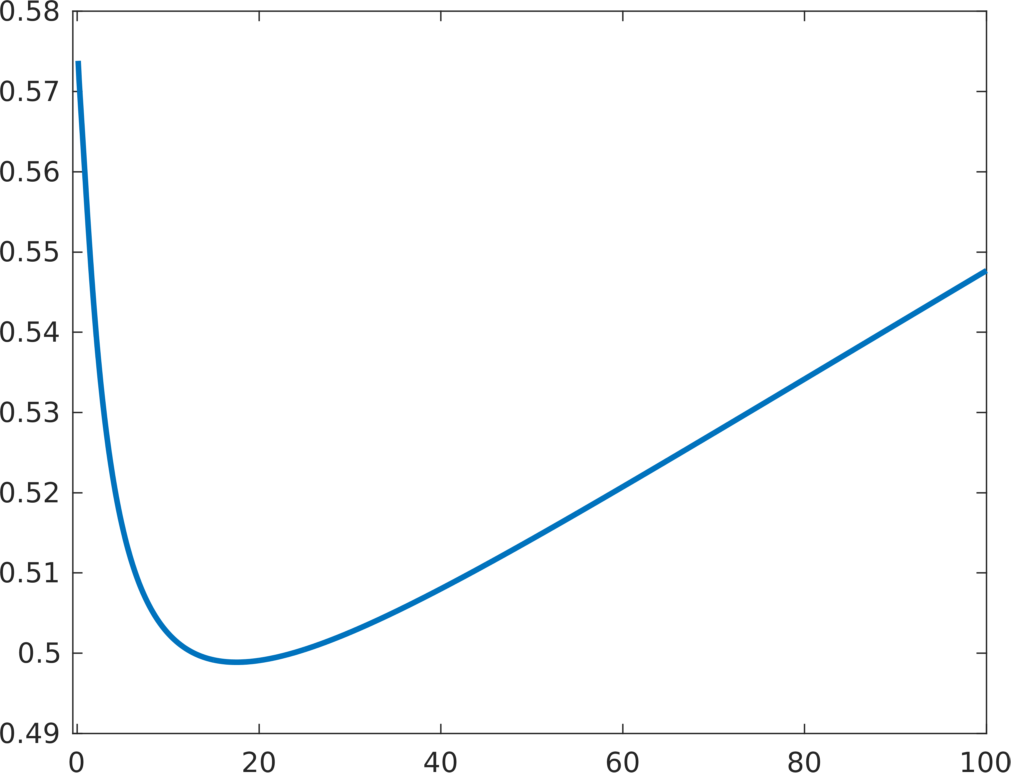}
		\put (101.5,1.5) {$D$}
		\put (-15,73.5) {$c^*(D)$}
	\end{overpic}
	\caption{Numerical counterexample on the monotonicity of the spreading speed with respect to $D$, corresponding to the choice of parameters given in the last part of Section~\ref{sec:initial-experiments}.} 
	\label{fig:counter-example}
\end{figure}

\section{Shape optimization of the spreading speed}\label{sec:shape}

Having established a method which is capable of computing the spreading speed of solutions of~\eqref{eq:evol} for general domains, we can perform a numerical study of the influence of the shape of the domain on this spreading speed. In particular, we will search numerically for sets which optimize the spreading speed under certain given constraints and choices of parameters.

\subsection{The notion of a shape derivative}

Our fundamental tool for searching an optimal shape is the notion of a shape derivative, which we briefly recall. Given a (typically bounded and smooth) shape $\omega\subset\mathbb{R}^N$ and a vector field $\theta \in W^{1,\infty}(\mathbb{R}^N,\mathbb{R}^N)$, one may consider the perturbed domain $(I+\theta)(\omega)$ where $I$ denotes the identity. It can be seen that, for $\|\theta\|_{W^{1,\infty}}$ small enough, this transformation is indeed a diffeomorphism. The shape derivative $J'(\omega)$ of a functional $\omega \mapsto J(\omega)$ is then defined as the Fr\'echet derivative of the mapping $\theta \mapsto J((I+\theta)(\omega))$ at~$\theta=0$.  In other words, it is a bounded linear operator such that
	\[ J((I+\theta)(\omega)) = J(\omega)+J'(\omega)(\theta) + o\, (\|\theta\|_{W^{1,\infty}}).\] 
We point out that throughout this section, the prime notation will always refer to the shape derivative. We refer to \cite{henrot-pierre-english} and \cite{allaireCO} for more details regarding the concept of shape derivative. 

Classical examples of shape derivatives are bulk and surface integrals of functions which do not depend on the shape. Suppose that $\omega$ is at least of class $C^2$, and for some given functions $v\in H^1(\Bbb{R}^d)$ and $w\in H^2 (\Bbb{R}^d)$, denote
\begin{equation}
 J_1(\omega) = \int_\omega v , \qquad  J_2(\omega) = \int_{\partial \omega} w .
 \label{eq:integrals}
\end{equation}
Then, for  a given vector field $\theta\in W^{1,\infty}$, the shape derivatives of $J_1$ and $J_2$ are given by
\begin{equation}
 J_1' (\omega)(\theta) = \int_{\partial \omega} v \theta\cdot n ,\quad \ J_2' (\omega)(\theta) = \int_{\partial \omega} \left( \pd{ w}{n}+\mathcal H w \right)\theta\cdot n,
 \label{eq:sh-deriv-integrals}
 \end{equation}
where $\mathcal H$ is the mean curvature of $\partial \omega$ (defined as the sum of the principal curvatures). We again refer to \cite{allaireCO} and \cite{henrot-pierre-english} for more details about these classical shape derivative formulas. Notice that the shape derivatives shown above only involve the normal component of the perturbation field $\theta$. This fact is true in general under mild regularity assumptions. A classical result states that when $\omega$ is at least $C^1$ and the shape derivative exists, then for every $\theta \in W^{1,\infty}$ there is a linear form $l$ on $C^1(\partial \omega)$ such that
\[ J'(\omega)(\theta) = l(\theta\cdot n).\]
This form will be called \emph{standard form} in the sequel, and our objective is to obtain the shape derivative of the spreading speed and express it in  standard form. For a proof of this structure theorem, one may consult \cite[Proposition 5.9.1, Theorem 5.9.2]{henrot-pierre-english} and \cite[Theorem~3.5]{delfour-zolesio}.

\subsection{Computing the shape derivative of the spreading speed}
\label{sec:computing_shape_deriv}

In this section, we compute the shape derivative of the spreading speed studied in this work. This will allow us to investigate its dependence on the choice of the domain, and in particular to search numerically for shapes which maximize or minimize it.

To simplify the presentation and the formulas, we assume that
$$\kappa(y)\equiv 1, \quad \g=g'(0) .$$ 
Actually, keeping the dependence on $y$ poses no supplementary technical difficulty in the computation of the shape derivative, the only difference being that, since $\ka$ and $\g$ appear in boundary integrals, when computing the shape derivative, in view of \eqref{eq:sh-deriv-integrals}, normal derivatives of these quantities need to be computed. 

In order to underline the dependence of the spreading speed on the shape, we write $\Lambda (\alpha, \omega)$ and $\lambda_\varepsilon (\alpha, \omega)$ instead of $\Lambda (\alpha)$ and $\lambda_\varepsilon (\alpha)$, as well as $A (\alpha, \omega)$, $A_\varepsilon(\alpha, \omega)$ and $B(\omega)$ instead of, respectively, $A(\a)$, $A_\varepsilon (\alpha)$ and $B$. Then we consider the functionals 
\begin{equation}
 J(\omega) = \min_{\alpha>0} \frac{-\Lambda(\alpha,\omega)}{\alpha},\qquad J_\varepsilon(\omega) = \min_{\alpha>0} \frac{-\lambda_\varepsilon(\alpha,\omega)}{\alpha}. 
 \label{eq:Jnotations}
\end{equation}
In the following we show in detail how to compute the shape derivative of $J_\varepsilon$ which we used in our numerical simulations. As we will see below, the shape derivative of $J$ is then obtained by letting $\varepsilon=0$. 

For any vector field $\theta \in W^{1,\infty} (\R^N , \R^N)$, problems~\eqref{coupled-Eig-Pb} and~\eqref{coupled-Eig-Pb-numeric} on the perturbed domain $\omega_t := (I + t \theta) (\omega)$ can be rewritten as some eigenvalue problems on the original domain~$\omega$ whose parameters depend analytically on $t$ small enough. Thus, it follows from Kato's perturbation theory, as explained in~\cite[Chapter~VII, Section~6.5]{kato-perturbation}, that $t \mapsto \Lambda (\alpha,\omega_t)$ and $t \mapsto \lambda_\varepsilon (\alpha, \omega_t)$ are analytic on a neighborhood of 0. In particular, both $\Lambda (\alpha , \omega)$ and $\lambda_\varepsilon (\alpha, \omega)$ admit shape derivatives evaluated at~$\theta$, which we denote respectively by $\Lambda ' (\alpha , \omega) (\theta )$ and $ \lambda_\varepsilon '( \alpha , \omega) (\theta)$.

Now denote by~$\alpha_{\omega_t}$ (respectively $\alpha_\omega$) the unique value where the minimum in $J_\varepsilon(\omega_t)$ (respectively $J_\varepsilon (\omega)$) is reached. By adapting the proof of Proposition~\ref{prop:concavity}, one may check that $\alpha_{\omega_t}$ is also the unique critical point of $\alpha \mapsto s_\varepsilon (\alpha , \omega_t) := \frac{-\leps (\alpha , \omega_t)}{\alpha}$, i.e., it satisfies
\begin{equation}\label{eq:crit_point}
	\partial_\alpha s_\varepsilon (\alpha_{\omega_t}, \omega_t)=0 \qquad \text{for all $t\sim 0$}.
	\end{equation}
	 Moreover, the derivative $\partial_\alpha s_\varepsilon (\alpha, \omega_t)$ can be computed by a formula analogous to~\eqref{eq:derivL} and, in particular, it is continuously differentiable as another consequence of Kato's theory. The same proof also shows that the second derivative $\partial_{\alpha}^2 s_\varepsilon  (\alpha ,\omega)$ is positive at $\alpha = \alpha_\omega$. Finally, applying the implicit function theorem to $(\alpha, t) \mapsto \partial_\alpha s_\varepsilon (\alpha, \omega_t)$, we obtain that $\alpha_{\omega_t}$ is differentiable with respect to~$t$ at $t=0$.

Then, by differentiating the function $t\mapsto s_{\varepsilon}(\alpha_{\omega_t}, \omega_t)$ at $t=0$ and using \eqref{eq:crit_point}, the shape derivative of $J_\varepsilon (\omega)$ evaluated for $\theta$, which we denote by $J_\varepsilon' (\omega)(\theta)$, is given by
	$$J_\varepsilon' (\omega)(\theta) = \frac{ -  \leps' (\alpha_\omega,\omega)(\theta)}{\alpha_\omega}.$$
	Therefore, it remains to compute the shape derivative of $\lambda_\varepsilon (\alpha, \omega)$ at fixed $\alpha$. This is the purpose of our next theorem. In the following, $\D$ denotes the differential or Jacobian matrix, and $\Hess$ denotes the Hessian of a function.

\begin{theo}
	\label{thm:sh-deriv-coupled-numeric}
The shape derivative of $\omega \mapsto \leps(\alpha,\omega)$ in the standard form, when $\alpha$ is fixed, is given by
    \begin{equation}
    \leps '(\alpha,\omega)(\theta) = \int_{\partial \omega} F_\varepsilon\,\theta\cdot n, \label{eq:shderiv-numeric}
    \end{equation}
    where, considering the solution $(U,V)$ of \eqref{coupled-Eig-Pb-numeric} which satisfies $ \int_{\partial \omega} U^2+\int_\omega V^2$,
    \begin{equation}
    	\begin{split}
    		\label{eq:func-sh-deriv-numeric}
    F_\varepsilon & :=  d|\nabla V|^2 -\left(d\alpha^2+\f\right) V^2 + \nu\left(  \partial_n (V^2)  + \mathcal H V^2\right)
     \\ 
    & \quad -2\sqrt{\mu\nu} 
    \left(U\partial_n V + \mathcal H UV\right) -\left(D\alpha^2+g'(0)-\mu\right)
 \mathcal H U^2  \\
    & \quad + D\left(
    \mathcal H |\nabla_\tau  U|^2 -  2\Hess b \nabla_\tau U\cdot \nabla_\tau U\right)  \\
    & \quad- \leps(\alpha,\omega) V^2-\leps(\alpha,\omega) \mathcal H U^2 +\varepsilon |\nabla U|^2 -2\varepsilon(\partial_n U)^2 ,  
   \end{split}
    \end{equation}
    where $\mathcal{H}$ is the mean curvature of $\partial \omega$ and $b$ the signed distance from $\partial \omega$. Finally, the shape derivative of $J_\varepsilon(\omega)$ is given by
    \[ J'_\varepsilon(\omega)=-\frac{\leps'(\alpha_\omega,\omega)}{\alpha_\omega},\]
    where $\alpha_\omega$ is the unique minimizer of $\alpha \mapsto -\leps(\alpha,\omega)/\alpha$ for $\alpha>0$.
\end{theo}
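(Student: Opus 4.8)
The plan is to reduce the statement to the computation of the shape derivative $\leps'(\a,\omega)(\theta)$ of the regularized eigenvalue at fixed $\a$, since the identity $J_\varepsilon'(\omega) = -\leps'(\a_\omega,\omega)/\a_\omega$ follows at once by differentiating $t\mapsto s_\varepsilon(\a_{\omega_t},\omega_t)$ and invoking the criticality \eqref{eq:crit_point}, exactly as explained just before the statement. I therefore concentrate on $\leps'(\a,\omega)(\theta)$.

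First I would exploit the self-adjointness of the bilinear form $A_\varepsilon(\a)$, which one reads off directly from \eqref{variational-form}: the two coupling terms $-\int_{\partial\omega}\sqrt{\mu\nu}\,U\psi$ and $-\int_{\partial\omega}\sqrt{\mu\nu}\,V\varphi$ are simply exchanged under $W\leftrightarrow\Psi$. Writing $W=(U,V)$ for the positive eigenfunction normalized by $B(\omega)(W,W)=1$, the Rayleigh characterization gives $\leps(\a,\omega)=A_\varepsilon(\a,\omega)(W,W)$. By the analyticity from Kato's theory, already invoked in the text, $t\mapsto W_t$ is differentiable and its shape derivative $W'$ belongs to the test space. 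Differentiating $\leps(t)=A_\varepsilon(\a,\omega_t)(W_t,W_t)$ together with the constraint $B(\omega_t)(W_t,W_t)=1$ at $t=0$, and testing the weak formulation \eqref{variational-form} against $\Psi=W'$, self-adjointness yields $A_\varepsilon(\a,\omega)(W',W)=\leps B(\omega)(W',W)$; this is precisely the cancellation that makes the contribution of the eigenfunction derivative $W'$ disappear, in the same spirit as the proof of Proposition~\ref{prop:concavity}. One is thus left with
\[
\leps'(\a,\omega)(\theta) = [\partial_\omega A_\varepsilon](\a,\omega)(W,W)(\theta) - \leps\,[\partial_\omega B](\omega)(W,W)(\theta),
\]
where $\partial_\omega$ denotes differentiation of the domain of integration only, with $W$ frozen as a fixed pair of functions on $\R^N$.

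Next I would apply the elementary shape-derivative formulas \eqref{eq:sh-deriv-integrals} term by term. The bulk integrals $d\int_\omega|\nabla V|^2$, $-\int_\omega(d\a^2+\f)V^2$, $\eps\int_\omega|\nabla U|^2$ and $-\leps\int_\omega V^2$ produce, via the $J_1'$ formula, the boundary densities $d|\nabla V|^2$, $-(d\a^2+\f)V^2$, $\eps|\nabla U|^2$ and $-\leps V^2$. The surface integrals $\nu\int_{\partial\omega}V^2$, $-2\sqrt{\mu\nu}\int_{\partial\omega}UV$, $-\int_{\partial\omega}(D\a^2+g'(0)-\mu)U^2$ and $-\leps\int_{\partial\omega}U^2$ produce, via the $J_2'$ formula, the densities $\nu(\partial_n(V^2)+\mathcal H V^2)$, $-2\sqrt{\mu\nu}(\partial_n(UV)+\mathcal H UV)$, together with the surviving curvature terms $-(D\a^2+g'(0)-\mu)\mathcal H U^2$, $-\leps\mathcal H U^2$ and several spurious $\partial_n(U^2)$ contributions. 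The only term requiring more than \eqref{eq:sh-deriv-integrals} is the tangential Dirichlet energy $D\int_{\partial\omega}|\nabla_\tau U|^2$, because $\nabla_\tau U$ itself depends on the surface; its shape derivative is the most delicate computation and is the source of the terms $D(\mathcal H|\nabla_\tau U|^2 - 2\Hess b\,\nabla_\tau U\cdot\nabla_\tau U)$, obtained through tangential calculus on $\partial\omega$ and the identities linking $\nabla_\tau$, the second fundamental form and the Hessian of the signed distance $b$.

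Finally I would collect all the normal-derivative contributions in $U$. Grouping the pieces $-2\sqrt{\mu\nu}\,V\partial_n U$ from the coupling term and $-2\big[(D\a^2+g'(0)-\mu)+\leps\big]U\partial_n U$ from the spurious $\partial_n(U^2)$ terms, their common factor is $\sqrt{\mu\nu}\,V+(D\a^2+g'(0)-\mu)U+\leps U$, which by the boundary equation in \eqref{coupled-Eig-Pb-numeric} equals $-D\Delta_\tau U+\eps\partial_n U$. This substitution is the crucial bookkeeping step: the $\eps\partial_n U$ part becomes exactly the term $-2\eps(\partial_n U)^2$ appearing in \eqref{eq:func-sh-deriv-numeric}, while the $-D\Delta_\tau U$ part recombines, after an integration by parts on the closed surface $\partial\omega$, with the remainder of the tangential-energy derivative to leave precisely the stated $D$-terms and to annihilate the spurious $\partial_n(U^2)$ contributions. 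Assembling everything yields the density $F_\varepsilon$ of \eqref{eq:func-sh-deriv-numeric}, and the formula $J_\varepsilon'(\omega) = -\leps'(\a_\omega,\omega)/\a_\omega$ follows. The hardest part is this final coordination between the shape derivative of the tangential gradient and the elimination of the $\partial_n U$ terms through the eigenvalue equation; letting $\eps=0$ throughout then gives the shape derivative of $J$.
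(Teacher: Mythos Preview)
Your proposal is correct and follows essentially the same route as the paper. The paper packages the cancellation of the eigenfunction-derivative terms via C\'ea's Lagrangian $\mathcal L(\lambda,\omega,W,\Psi)=\lambda+A_\varepsilon(\alpha,\omega)(W,\Psi)-\lambda B(\omega)(W,\Psi)$ rather than by invoking self-adjointness of $A_\varepsilon$ directly, but this is only a cosmetic difference; the resulting identity $\leps'=[\partial_\omega A_\varepsilon](W,W)-\leps[\partial_\omega B](W,W)$ is the same, and the subsequent term-by-term use of \eqref{eq:sh-deriv-integrals}, the special treatment of $D\int_{\partial\omega}|\nabla_\tau U|^2$ via tangential calculus and $\Hess b$, and the elimination of the $\partial_n U$ contributions through the boundary equation of \eqref{coupled-Eig-Pb-numeric} all match. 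One small point of presentation: the shape derivative of $D\int_{\partial\omega}|\nabla_\tau U|^2$ does not give only $D(\mathcal H|\nabla_\tau U|^2-2\Hess b\,\nabla_\tau U\cdot\nabla_\tau U)$ but also the extra piece $-2D\,\Delta_\tau U\,\partial_n U$; in the paper this is made explicit and is precisely what cancels against the $+2D\,\Delta_\tau U\,\partial_n U$ that your boundary-equation substitution produces, so your ``remainder of the tangential-energy derivative'' should be identified with that term rather than left implicit.
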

\begin{proof}
As in the proof of Proposition~\ref{prop:concavity}, following the ideas in \cite{Cea86},it is useful to consider a Lagrangian $\mathcal L : \Bbb R \times \mathcal U_{ad} \times \left(H^2_{loc} (\Bbb{R}^d)  \times H^1_{loc} (\Bbb{R}^d)\right)^2$, where the dependence on the shape~$\omega$ is emphasized by including the shape as a variable belonging to the set of admissible domains~$\mathcal U_{ad}$, consisting of bounded, simply connected domains with $C^3$ boundary. The Lagrangian $\mathcal L$ is defined as follows:
\begin{equation}
	\label{eq:lagrangian}
	 \mathcal L (\lambda,\omega,W,\Psi) = \lambda + A_\varepsilon (\alpha,\omega)(W,\Psi)-\lambda B(\omega)(W,\Psi).
	 \end{equation}
Notice that the first component of each pair of test functions is assumed to be $H^2$, so that its trace on any hypersurface is $H^1$. This can be done without loss of generality because the principal eigenfunction can be shown to possess such a regularity by a bootstrap argument.
We also point out that all variables of this Lagrangian are considered to be independent. 
Evaluating~$\mathcal L$ with $W = (U,V)$, the eigenfunction normalized by $B (\omega) (W,W)= 1$, and $\lambda = \leps(\alpha , \omega)$, we find
\[ \mathcal{L}(\leps(\alpha , \omega),\omega,W, \Psi) = \leps(\alpha,\omega).\]

Using the chain rule, the derivative of the above expression with respect to $\omega$ (we recall that, in this section, we use the prime notation to denote the shape derivative) in the direction given by $\theta$ is equal to
\begin{equation*}
	\leps' (\alpha, \omega)(\theta) = \pd{\mathcal L}{\lambda} \cdot \left( \leps'  (\alpha, \omega)(\theta)\right) +\pd{\mathcal L}{\omega}  (\theta)  +\pd{\mathcal L}{W} \cdot\left( W' (\theta)\right)+\pd{\mathcal L}{\Psi}\cdot \left(\Psi' (\theta)\right),
\end{equation*}
with all the partial derivatives of $\mathcal{L}$ evaluated at $(\leps(\alpha, \omega),\omega,W,\Psi)$. Recall that the shape derivatives $\lambda_\varepsilon '(\alpha,\omega)(\theta)$ and $W'(\theta)$ are indeed well defined.

As before, in order to compute the shape derivative, it is useful to see under what conditions the partial derivatives of this Lagrangian with respect to the other variables vanish. For the partial derivatives of $\mathcal L$ with respect to the variables $W$ or $\Psi$ to vanish, we find the variational formulation of \eqref{coupled-Eig-Pb-numeric}. In other words, the partial derivatives of $\mathcal{L}$ with respect to both $W$ or $\Psi$ vanish if $\lambda= \leps (\alpha, \omega)$ and $W = \Psi = (U,V)$ the eigenfunction pair of \eqref{coupled-Eig-Pb-numeric}. Furthermore, the normalization condition $B(W,W) = 1$ eliminates the derivative of~$\mathcal{L}$ with respect to $\lambda$ as in \eqref{eq:2.18bis}.

In the end, we are left with
\[ \lambda_\varepsilon ' (\alpha, \omega)(\theta) = \frac{\partial \mathcal L}{\partial \omega}(\lambda_\varepsilon (\alpha, \omega),\omega,W,W)(\theta). \]
Therefore, recalling the definition \eqref{eq:lagrangian}, since the operators $A_\varepsilon$ and $B$ are sums of integral terms,  we only need to compute the shape derivatives of integrals of constant (with respect to the geometry), functions for which we can use \eqref{eq:integrals} and \eqref{eq:sh-deriv-integrals}.

The most technical part is the differentiation of the term $T(U) = D\int_{\partial \omega} | \nabla_\tau U |^2$ which appears in $A_\varepsilon (\alpha,\omega) (W,\Psi)$. It is not straightforward, since the tangential gradient $\nabla_\tau$ depends on the normal, thus, implicitly, on the shape $\omega$. It is possible to use the shape derivative of the normal given by $${n}' (\omega)(\theta) = -\nabla_\tau (\theta \cdot n),$$ in order to differentiate this term (for more details see~\cite[Chapter 9]{delfour-zolesio}). If we explicitly write the tangential gradient of $U$, we have
\[ T(U ) =D \int_{\partial \omega}  | \nabla U| ^2  - (\nabla U\cdot n)^2.\]
By elliptic estimates and a bootstrap argument (recall that $U$ is the first component of the principal eigenfunction), it is possible to differentiate the previous expression with respect to $\omega$, which gives
\begin{align*}
	T' (U )(\theta) &= D\int_{\partial \omega} \left(\mathcal H|\nabla_\tau U|^2+\partial_n(|\nabla_\tau U|^2)\right) \theta \cdot n + 2D \int_{\partial \omega} \partial_n U \nabla U \cdot \nabla_\tau (\theta \cdot n) \\
	& = D\int_{\partial \omega} \left(\mathcal H|\nabla_\tau U|^2+\partial_n(|\nabla_\tau U|^2)\right) \theta \cdot n + 2D \int_{\partial \omega} \partial_n U \nabla_\tau U \cdot \nabla_\tau (\theta \cdot n)
	\end{align*}

 In order to further simplify this expression all derivatives of $\theta \cdot n$ should be eliminated and, preferably, all terms involving derivatives of order higher than one should be rewritten (for regularity and computational purposes). An integration by parts on $\partial \omega$ gives the following:
 \begin{equation*}
 \!	\int_{\partial \omega}\!\!\!\! \Delta_\tau U \partial_n U \theta \cdot n  = -\!\int_{\partial \omega}\!\!\!\! \nabla_\tau U \cdot \nabla_\tau (\partial_n U \theta \cdot n )
 	 = -\!\int_{\partial \omega}\!\!\!\! \partial_n U\nabla_\tau U \cdot \nabla_\tau (\theta \cdot n)-\!\int_{\partial \omega}\!\!\!\! \nabla_\tau U\cdot \nabla_\tau \partial_n U \theta \cdot n .
 \end{equation*}
In particular, 
\[\int_{\partial \omega} \partial_n U\nabla_\tau U \cdot \nabla_\tau (\theta \cdot n)= -\int_{\partial \omega} \Delta_\tau U \partial_n U \theta \cdot n-\int_{\partial \omega} \nabla_\tau U\cdot \nabla_\tau \partial_n U \theta \cdot n .\]
 Denoting by $b$ the signed distance function to $\partial \omega$ and noting that, since $\partial \omega$ is regular enough, then $n = \nabla b$ (see \cite[Chapter 9]{delfour-zolesio}), we obtain
\[ \nabla_\tau \partial_n U = \nabla_\tau (\nabla U \cdot n) = \nabla (\nabla U \cdot n)-(\nabla(\nabla U\cdot n)\cdot n)n = \Hess U n +\Hess b\nabla U-  \Gamma n, 
\]
where $\Gamma$ is some $L^1 (\partial \omega)$ function. We do not make the function $\Gamma$ explicit, since it will eventually vanish when multiplied with the tangential gradient.

Combining the relations obtained above gives
\begin{align*}
	\int_{\partial \omega} \partial_n U\nabla_\tau U \cdot \nabla_\tau (\theta \cdot n)  = & -\int_{\partial \omega} \Delta_\tau U \partial_n U\theta \cdot n - \int_{\partial \omega} \nabla_\tau U \cdot \left(\Hess U n +\Hess b \nabla U - \Gamma n\right) \theta \cdot n \\
	= & -\int_{\partial \omega} \Delta_\tau U \partial_n U \theta \cdot n  -\int_{\partial \omega}\left( \Hess U n \cdot \nabla_\tau U + \Hess b \nabla_\tau U\cdot \nabla_\tau U\right)\theta \cdot n .
\end{align*}
On the other hand, the term containing higher order derivatives on $U$ is
\begin{align*}
	\partial_n \left(|\nabla_\tau U|^2\right) & = \nabla \left( |\nabla U|^2-(\partial_n U)^2\right)\cdot n\\
& = \left(2\Hess  U \nabla U-2\partial_n U(\Hess  U n + \D n \nabla U)\right)\cdot n\\
& = 2\Hess U\nabla U\cdot n-2\partial_n U \Hess  U n\cdot n \\
& = 2\Hess U \nabla_\tau U\cdot n  ,
\end{align*}
where we used the symmetry of the matrices $\Hess U$ and $\D n$ (recall that $n=\nabla b$), and the fact that $(\D n)n = 0 $.
Fortunately, by putting the above two computations together, the terms containing the Hessian $\Hess U$ in the expression of the shape derivative of $T$ cancel. 

Finally, the shape derivative of $T$ is given by
\[
T'(U)(\theta) = D \int_{\partial \omega} \left(
\mathcal H |\nabla_\tau  U|^2 - 2\Delta_\tau U \partial_n U - 2\Hess b \nabla_\tau U\cdot \nabla_\tau U\right) \theta \cdot n 
.\]
Using the boundary equation $-D\Delta_\tau U = \leps (\alpha,\omega) U +\left(D\alpha^2+g'(0)-\mu\right)U +\sqrt{\mu\nu} V-\eps \partial_n U$, it is possible to further eliminate the Laplace-Beltrami operator in the expression above. 

One can now compute the shape derivative of $\leps (\alpha ,\omega)$, using again the formulas~\eqref{eq:integrals} and~\eqref{eq:sh-deriv-integrals} to differentiate the other integral terms. We omit the details and, after some straightforward computations, one finds the following result:
\begin{align*}
\leps'  (\alpha, \omega)(\theta) 
& = d\int_{\partial \omega} |\nabla V|^2 \theta \cdot n  -  \int_{\partial \omega} \left(d\alpha^2+\f\right) V^2 \theta \cdot n +\varepsilon \int_{\partial \omega} |\nabla U|^2 \theta \cdot n \\
& \quad + \nu \int_{\partial \omega} \left( 
\partial_n (V^2) + \mathcal H V^2\right) \theta \cdot n  
 -2\sqrt{\mu\nu} \int_{\partial \omega}
\left( \partial_n (UV) + \mathcal H UV\right) \theta \cdot n \\  
& \quad -\left(D\alpha^2+g'(0)-\mu\right) \int_{\partial \omega} 
 \left( \partial_n(U^2)+ \mathcal H U^2\right)  \theta \cdot n \\ 
& \quad +  \int_{\partial \omega} D\left(
	\mathcal H |\nabla_\tau  U|^2- 2\Hess b \nabla_\tau U\cdot \nabla_\tau U   \right) \theta \cdot n  \\
& \quad + \int_{\partial \omega} 2\left(\leps(\alpha,\omega) U+(D\alpha^2+g'(0)-\mu)U+\sqrt{\mu\nu}V-\varepsilon\partial_n U\right) \partial_n U\theta \cdot n \\
& \quad - \leps(\alpha,\omega) \int_{\partial \omega} V^2 \theta \cdot n 
- \leps(\alpha,\omega) \int_{\partial \omega} \left( \mathcal H U^2+\partial_n (U^2)\right) 
 \theta \cdot n .   
\end{align*}
Finally, by grouping similar terms and performing some simplifications, we obtain exactly~\eqref{eq:shderiv-numeric} with~\eqref{eq:func-sh-deriv-numeric}, and the theorem is proved.
\end{proof}
 
The spreading speed for problem \eqref{eq:evol} is obtained by letting $\varepsilon =0$ in the approximate problem \eqref{coupled-Eig-Pb-numeric}. It is indeed possible to redo the same computations in this case, even though there is no information on the behaviour of~$U$ outside $\partial \omega$. Indeed, one may note that in the expression of the shape derivative \eqref{eq:func-sh-deriv-numeric}, when setting $\varepsilon=0$, the terms containing the normal derivative of $U$ and the gradient of the extension of $U$ outside $\partial \omega$ vanish. Therefore, one may consider an arbitrary extension of $U$ when dealing with the tangential gradient, obtaining an analogous result in the end. This is in accordance with the fact that, from a theoretical point of view, the spreading speed and the solution of \eqref{coupled-Eig-Pb} does not depend on the extension of $U$ outside~$\partial \omega$.

\begin{theo}
	\label{thm:sh-deriv-coupled}
The shape derivative of $\omega \mapsto \Lambda(\alpha,\omega)$ in the standard form, when $\alpha$ is fixed, is given by
	\begin{equation}
	\Lambda'(\alpha,\omega) (\theta) = \int_{\partial \omega} F_0\,\theta \cdot n, \label{eq:shderiv-spreading}
	\end{equation}
	where $F_0$ is given by \eqref{eq:func-sh-deriv-numeric} with $\varepsilon=0$, $(U,V)$ being the solution of \eqref{coupled-Eig-Pb} which satisfies $ \int_{\partial \omega} U^2+\int_\omega V^2$.
	
	Therefore, the shape derivative of $J(\omega)$ is given by
	\[ J'(\omega)=-\frac{\Lambda'(\alpha_\omega,\omega) }{\alpha_\omega},\]
	where $\alpha_\omega$ is the minimizer of $\alpha \mapsto -\Lambda(\alpha,\omega)/\alpha$ for $\alpha>0$, and $\Lambda'(\alpha_\omega,\omega)$ is computed using~\eqref{eq:shderiv-spreading} with~$\alpha=\alpha_\omega$.
\end{theo}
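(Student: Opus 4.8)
The plan is to reproduce \emph{verbatim} the Lagrangian computation of the proof of Theorem~\ref{thm:sh-deriv-coupled-numeric}, now starting from the $\varepsilon=0$ bilinear form, i.e.\ from
\[
\mathcal L(\lambda,\omega,W,\Psi)=\lambda+A_0(\alpha,\omega)(W,\Psi)-\lambda B(\omega)(W,\Psi),
\]
where $A_0$ is the $\varepsilon=0$ version of $A_\varepsilon$. The differentiability of $\omega\mapsto\Lambda(\alpha,\omega)$ is granted by Kato's perturbation theory exactly as in the $\varepsilon>0$ case, and the envelope argument (using the strict convexity at the minimizer from Proposition~\ref{prop:concavity} together with the implicit function theorem) yields the reduction $J'(\omega)=-\Lambda'(\alpha_\omega,\omega)/\alpha_\omega$ without any change. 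As before, the vanishing of the partial derivatives of $\mathcal L$ with respect to $W$, $\Psi$ and $\lambda$ at the normalized eigenpair $(U,V)$ leaves us only with $\partial\mathcal L/\partial\omega$, i.e.\ with the shape derivatives of the purely geometric boundary and bulk integrals in $A_0$ and $B$, handled by the standard formulas~\eqref{eq:integrals} and~\eqref{eq:sh-deriv-integrals}.

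The one genuinely new point is the differentiation of the tangential Dirichlet energy $T(U)=D\int_{\partial\omega}|\nabla_\tau U|^2$: at $\varepsilon=0$ the first component $U$ is defined only on $\partial\omega$, so there is no canonical extension into $\omega$ to make sense of $\nabla U$ and $\partial_n U$. I would resolve this by fixing \emph{any} sufficiently regular extension $\tilde U$ of $U$ into $\omega$ — which exists since the principal eigenfunction is smooth by the bootstrap already invoked — writing $\nabla_\tau U=\nabla\tilde U-(\partial_n\tilde U)\,n$, and carrying out the same sequence of integrations by parts on $\partial\omega$. Just as in the $\varepsilon>0$ case, the terms involving $\Hess\tilde U$ cancel identically, leaving the intermediate expression $T'(U)(\theta)=D\int_{\partial\omega}\bigl(\mathcal H|\nabla_\tau U|^2-2\Delta_\tau U\,\partial_n\tilde U-2\Hess b\,\nabla_\tau U\cdot\nabla_\tau U\bigr)\theta\cdot n$, after which the Laplace--Beltrami term is removed via the $\varepsilon=0$ boundary equation $-D\Delta_\tau U=\Lambda(\alpha,\omega)U+(D\alpha^2+g'(0)-\mu)U+\sqrt{\mu\nu}\,V$, which, crucially, no longer contains any $\varepsilon\,\partial_n U$ contribution.

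The heart of the argument, and the step I expect to be the main obstacle, is to verify that the result is \emph{independent of the chosen extension} $\tilde U$. Concretely, one collects every term carrying a factor $\partial_n\tilde U$ — these originate from $T'(U)(\theta)$, from $\partial_n(U^2)$ and $\partial_n(UV)$ in the coupling and reaction integrals, and from substituting the boundary equation — and checks that, at $\varepsilon=0$, their total coefficient $-2\sqrt{\mu\nu}\,V-2(D\alpha^2+g'(0)-\mu)U+2\Lambda U+2(D\alpha^2+g'(0)-\mu)U+2\sqrt{\mu\nu}\,V-2\Lambda U$ vanishes identically; meanwhile the only other extension-dependent pieces $\varepsilon|\nabla\tilde U|^2-2\varepsilon(\partial_n\tilde U)^2$ of~\eqref{eq:func-sh-deriv-numeric} drop out trivially. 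This is precisely the statement that setting $\varepsilon=0$ in~\eqref{eq:func-sh-deriv-numeric} produces an integrand $F_0$ expressed solely through the intrinsic tangential gradient $\nabla_\tau U$, the intrinsic geometric quantities $\mathcal H$ and $\Hess b$, and $V$ with its well-defined normal derivative $\partial_n V$. Once this cancellation is confirmed — consistently with the fact that problem~\eqref{coupled-Eig-Pb} itself does not see the extension of $U$ — we obtain $\Lambda'(\alpha,\omega)(\theta)=\int_{\partial\omega}F_0\,\theta\cdot n$, and the formula for $J'(\omega)$ follows immediately.
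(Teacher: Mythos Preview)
Your proposal is correct and follows essentially the same approach as the paper. The paper does not give a separate proof of this theorem; it simply remarks, in the paragraph preceding the statement, that the computation of Theorem~\ref{thm:sh-deriv-coupled-numeric} can be redone at $\varepsilon=0$ using an arbitrary extension of~$U$, and that the extension-dependent terms in~\eqref{eq:func-sh-deriv-numeric} vanish when $\varepsilon=0$ --- you carry out explicitly the cancellation of the $\partial_n\tilde U$ contributions that the paper only asserts.
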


\subsection{Numerical optimal shapes}

In this section we apply the results obtained in the previous sections with the aim of numerically finding the shapes which optimize (either maximize or minimize) the spreading speed among simply connected sets in the plane (in particular, we do not consider the addition or removal of holes in the domain $\omega$). 
We also point out that we do not theoretically address  the question of existence of optimal shapes. This appears to be a challenging issue, and our numerical simulations actually suggest that such an optimal shape may or may not exist, depending on the parameters. 

In the numerical computations the approximation $J_\varepsilon$ given by \eqref{eq:Jnotations} is used.
As usual, when performing numerical shape optimization, it is not possible to guarantee that the results given by the numerical algorithms are more than local optimizers of the spreading speed. In the numerical simulations, the approximate eigenvalue problem \eqref{coupled-Eig-Pb-numeric} and the corresponding result of Theorem \ref{thm:sh-deriv-coupled-numeric} are used.

As stated in the previous paragraphs, we choose to work in the class of simply connected domains in the plane. In order to further simplify the numerical treatment, we also suppose that the domains are star-shaped. These domains can be uniquely determined using a function $\rho : [0,2\pi] \to \mathbb{R}_+$ of class $C^3$ (for $\partial\omega$ to be sufficiently regular) such that $\rho(0)=\rho(2\pi)$. Note that the shape derivative \eqref{eq:shderiv-numeric}-\eqref{eq:func-sh-deriv-numeric} contains terms related to the tangential gradient, curvature and Hessian of the signed distance function to the boundary. The explicit parametrization of the boundary by means of $\rho$ allows us to be able to explicitly compute all these quantities needed in the numerical algorithms.  Such a function is discretized by using a truncation of its Fourier series

\[ \rho(\vartheta) \approx a_0+\sum_{k=1}^M a_k\cos(k \vartheta) + b_k \sin(k\vartheta). \]

The shape $\omega$ is thus parametrized using the variables $\bo v :=(a_0,a_1,...,a_M,b_1,...,b_M) \in \Bbb{R}^{2M+1}$. In order to use a generic gradient algorithm, the gradient with respect to each of the variables should be computed. If $\hat r(\vartheta) := (\cos \vartheta,\sin \vartheta)$, then a parametrization of the boundary is given by $(x(\vartheta),y(\vartheta)) = \rho(\vartheta)\hat r(\vartheta)$. It is not difficult to see that the tangent vector corresponding to this parametrization is $\bo T(\vartheta) = (x'(\vartheta),y'(\vartheta)) = \rho'(\vartheta)\hat r (\vartheta)+\rho(\vartheta)\hat r'(\vartheta)$. Since~$\hat r(\vartheta)$ and~$\hat r'(\vartheta)$ are orthogonal, it follows that $\|\bo T(\vartheta)\| = \sqrt{\rho(\vartheta)^2+\rho'(\vartheta)^2}$. Finally, the outer unit normal is given by $\bo N(\vartheta) = \left(\rho(\vartheta)\hat r-\rho'(\vartheta) \hat r'\right) )/\sqrt{\rho(\vartheta)^2+\rho'(\vartheta)^2}$. In order to compute the partial derivatives with respect to the Fourier coefficients, it is enough to evaluate \eqref{eq:shderiv-numeric} for $\theta = \hat r \cos(k\vartheta)$ and $\theta = \hat r\sin(k\vartheta)$, $k \in \{0,1,...,M\}$. When the partial derivatives with respect to all the Fourier coefficients are known, a classical gradient descent algorithm is used in order to search for numerical optimizers. 

With the considerations above, the spreading speed $c^*(\omega)$ is approximated by a function~$\mathcal C^*(\bo v)$ for which $\nabla \mathcal C^*$ can be computed. The classical gradient descent algorithm used is the following:
\begin{itemize}[topsep=0pt,noitemsep]
	\item[$\star$] {\bf Initialization:} Choose initial Fourier coefficients $\bo v_0$
	\item[$\star$] choose the initial step $\delta t>0$ and a stopping criterion $\text{tol}>0$;
	\item[$\star$] {\bf Optimization loop:}  while $\delta t>\text{tol}$
	\begin{itemize}[noitemsep,topsep=0pt]
		\item compute $\mathcal C^*(\bo v_i)$ and $\nabla \mathcal C^*(\bo v_i)$;
		\item define $\bo v_{i+1} = \bo v_i - \delta t \nabla \mathcal C^*(\bo v_i)$;
		\item if a {\bf  constraint}  is imposed, then {\bf project} onto the constraint: $\bo v_{i+1} = \mathcal P(\bo v_{i+1})$.
		\item if $\mathcal C^*(\bo v_{i+1}) < \mathcal C^*(\bo v_i)$, then accept the iterate and eventually increase $\delta t$;
		\item else reject the iterate and decrease $\delta t$.
	\end{itemize}
\end{itemize}

As underlined before, it is not possible to guarantee that the results given by the numerical optima are global optima. In order to avoid as much as possible local optima, the algorithm is executed multiple times with randomly generated initializations.

\subsubsection{The homogeneous case with no reaction on the surface}

We restrict ourselves to the homogeneous case and start with a situation where $f' (0) >0$ and $g' (0) = 0$. More precisely, here we will fix
\begin{equation}
	\label{eq:param1}
f ' (0) = 0.5 >  0 = g' (0), \qquad \text{and} \qquad \kappa (\cdot) \equiv 1 = \mu = \nu .
\end{equation}
The diffusion parameters $d$ and~$D$, and, of course, the shape of the domain $\omega$ may vary.

First of all, we start by briefly discussing the problem of minimizing the speed. With the chosen parameters, according to Proposition~\ref{prop:sufficient} the speed is always positive; yet, as established in Theorem \ref{th:R0}, the speed goes to $0$ when the size of the domain goes to $0$. This means that the infimum of the speed over all domains $\omega$ is not reached.

On the other hand, when dealing with variational eigenvalue problems on disconnected domains, it is classical that the spectrum may be recovered by considering the union of the spectra of all connected components. In particular, if~$\omega_n$ is the union of $n$ disks $B(x_k,R_n)$, $k=1,...,n$ of the same radius $R_n$, and the total area or perimeter is fixed and independent on $n$, then, thanks to Theorem \ref{th:R0}, $c^*(\omega_n)=c^*(B(0,R_n))\to 0$ as $n\to+\infty$, since $R_n\to 0$. This means that, even under an area or perimeter constraint, the infimum of the speed is not reached. In practice, our numerical computations fail because the domain is pushed towards a non star-shaped one. 

After this analysis, at least for this subsection, we will focus on the problem of maximizing the speed with respect to the domain~$\omega$.\\

{\bf Maximizing the speed for disks.} A first case to test our method, in which the optimal shapes are known, is maximizing the speed in the class of disks when $D>2d$. In such a case, it was proven in \cite{RTV} that the maximal spreading speed $c_M$ is obtained for a disk of radius $R^*$, with
\begin{equation}
	\label{eq:R^*_RTV}
	c_M:=\frac{D f'(0)}{\sqrt{(D-d)f'(0)}} \quad \text{and} \quad R^* := \frac{2D\nu}{(D-2d)\mu}. 
\end{equation}
 We performed the optimization process without any additional constraints. The algorithm was tested for $d=1,D=3$, and we obtained a maximal spreading speed of $1.5$ for a radius of $6$, in accordance with \eqref{eq:R^*_RTV}.

When $D \leq 2d$, it was shown in \cite{RTV} that the speed is increasing with respect to $R$ and converges to $2\sqrt{df'(0)}$ when $R\to+\infty$, without reaching its supremum. Again, our numerical observations are consistent with the theory developped in \cite{RTV}, as we see the radius increases and the speed converges to the expected quantity.\\

Let us now concentrate on the cases where area or perimeter constraints are imposed. This will provide some insight to understand our numerical observations in the unconstrained case.\\

{\bf Maximizing the speed under area constraint.} A first natural constraint to be considered is the area constraint. At each iteration, when computing the next iterate, a projection step is performed. The projection chosen here is to simply perform a scaling so that the projected shape has the desired area. 

The first computation deals with the parameters indicated in \eqref{eq:param1}, $d=1$ and $D=1.5$, under the constraint $\text{Area}(\omega) = \pi$. The discretization of the function $\rho$ uses $33$ Fourier coefficients. The tolerance for the descent step was set to 1e-6. Some instances of the shape optimization process are shown in Figure \ref{fig:max-area}. The evolution of the speed is plotted in Figure~\ref{fig:max-area-cost}. The optimization process has $119$ iterations, but multiple finite element computations are realized per iteration in order to find the parameter $\alpha$ which gives the spreading speed. A total of $1549$ finite element computations were necessary for this computation.

The optimal domain obtained in this first case is an approximation of the unit disk. For some other choices of parameters $d$,~$D$, under area constraint, the optimal shape remains the disk of the corresponding area. Some of the computations performed are given below:
\begin{itemize}[noitemsep,topsep=0pt]
	\item $d=1, D=1.5$, $\text{Area}(\omega) \in \{1,10,100,200,500\}$: the numerical solution is close to a disk;
	\item $d=1, D=3$: in this case, the disk of radius $R^*=6$ maximizes the speed among disks. Denoting $A^* := \pi(R^*)^2 = 36\pi$, we have the following: the obtained numerical solution  is the disk for $\text{Area}(\omega) \in \{1,10,A^*-1,A^*-0.1,A^*\}$. However, for $\text{Area}(\omega)>A^*$, the algorithm breaks in a similar way to what happens in the minimization problem, it does not give a definite optimal shape, and the disk of the desired area does not seem to be optimal. A theoretical justification for this behavior is sketched hereafter. 
\end{itemize}
Indeed, in the latter case when $D > 2d$ and $\text{Area}(\omega) >A^*$, then it is clear that a disconnected set, consisting of two disks of areas $A^*$ and $\text{Area} (\omega) - A^*$, respectively, achieves a larger speed than the single disk of area $\text{Area}( \omega)$. Yet such a domain is not achievable by our numerical algorithm. Together with our numerical computations for unconstrained disks described above, this suggests that the maximum of the speeds among domains $\omega$ with $\text{Area}(\omega)>A^*$ is always the speed in the optimal disk. In particular, above this threshold, the maximal speed no longer depends on the constraint.\\

\begin{figure}
	\centering
	\begin{tabular}{ccccc}
	\includegraphics[width=0.18\textwidth]{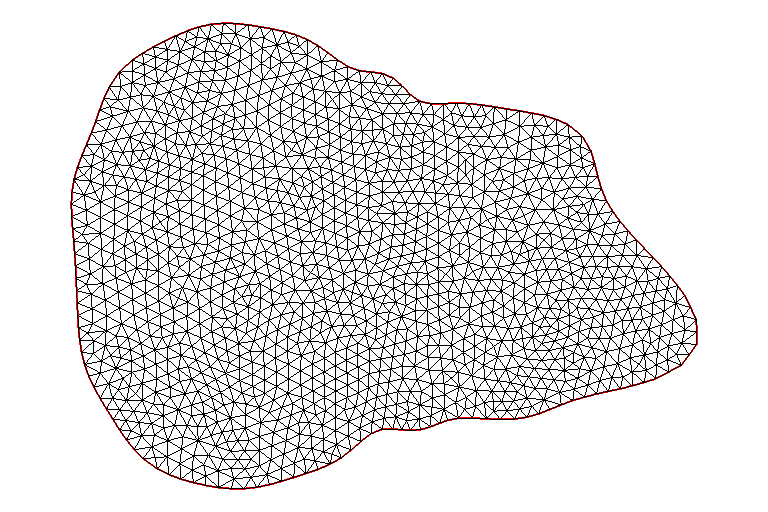} &
	\includegraphics[width=0.18\textwidth]{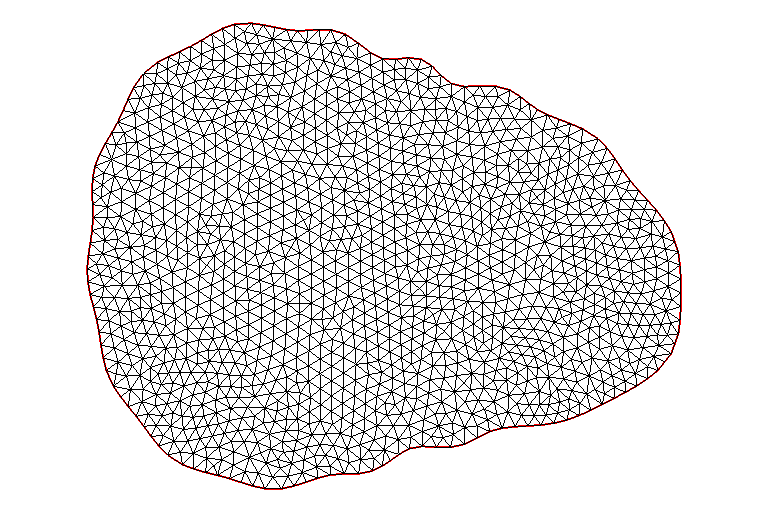}&
	\includegraphics[width=0.18\textwidth]{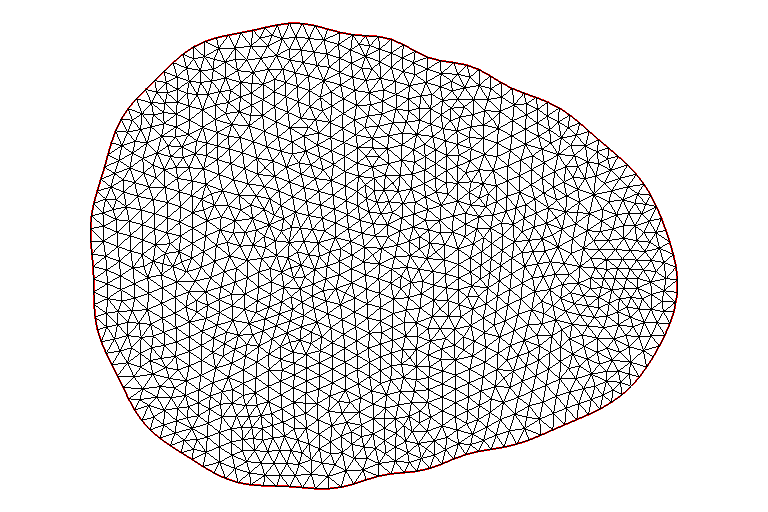}&
	\includegraphics[width=0.18\textwidth]{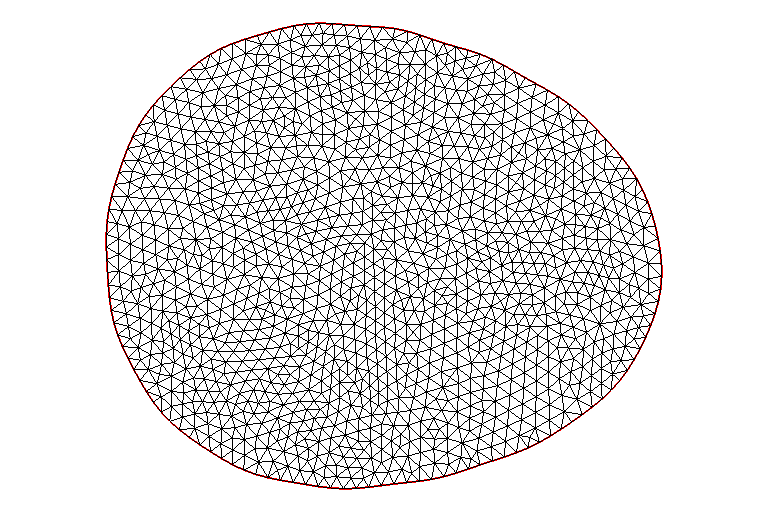}&
	\includegraphics[width=0.18\textwidth]{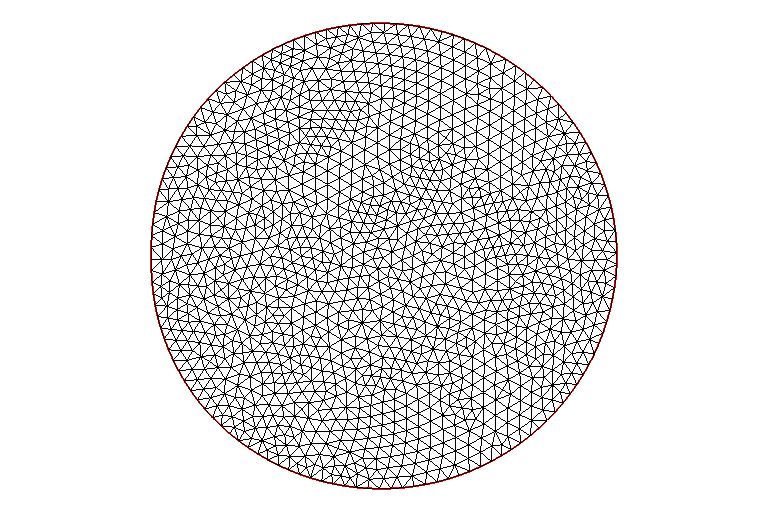}\\
	Iter 1 & Iter 5 & Iter 10 & Iter 20 & Iter 119
	\end{tabular}
\caption{Evolution of the domain during the maximization process of the speed under the constraint $\text{Area}(\omega)=\pi$, for the choice of the parameters \eqref{eq:param1}, $d=1$ and $D=1.5$.}
\label{fig:max-area}
\end{figure}

\begin{figure}
	\centering
	\includegraphics[width=0.5\textwidth]{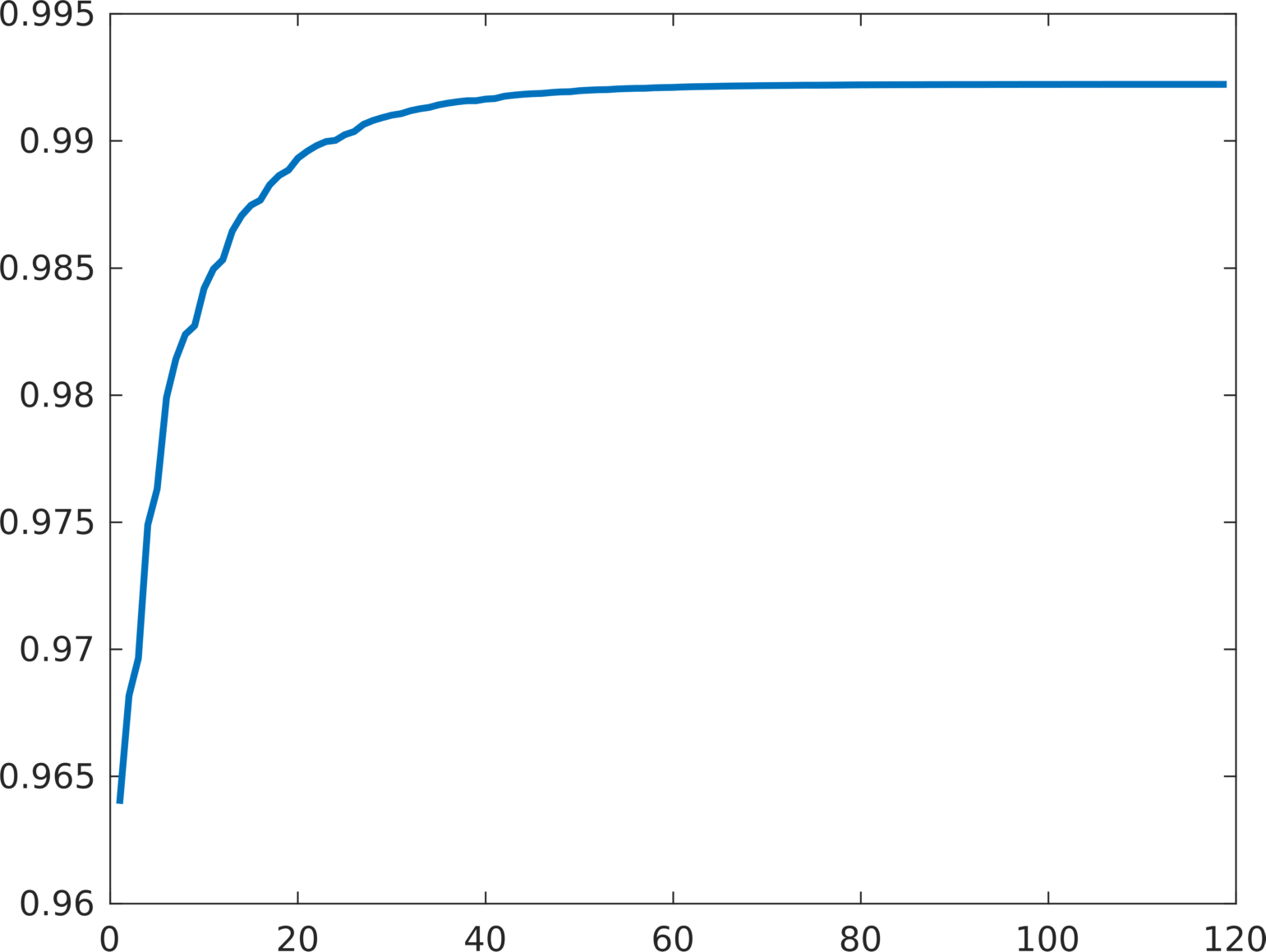}
	\caption{Evolution of the spreading speed during the optimization process: area constraint. The parameters are the same of Figure \ref{fig:max-area}. We plot the number of iteration on the horizontal axis and the corresponding value of the approximated speed on the vertical one. }
	\label{fig:max-area-cost}
\end{figure}

{\bf Maximizing the speed under perimeter constraint.} A second natural constraint is to fix the perimeter of the shape $\omega$. In the same way as for the area constraint, the perimeter constraint is enforced by rescaling the shape at each iteration in order to have the desired perimeter.

As before, we first consider the parameters indicated in \eqref{eq:param1}, $d=1$ and $D=1.5$, and now take the constraint $\text{Perim}(\omega) = 2\pi$. The discretization of the function $\rho$ uses $25$ Fourier coefficients. The tolerance for the descent step was set to 1e-6. Some instances of the shape optimization process are shown in Figure \ref{fig:max-perim}. The evolution of the speed is plotted in Figure~\ref{fig:max-perim-cost}. The optimization process has $86$ iterations, and needs a total of~$2037$ finite element computations.

Similarly to the case of the area constraint, there is a change in behaviour when comparing the constraint with the perimeter of the optimal disk, when it exists. Some of the computations performed are given below:
\begin{itemize}[noitemsep,topsep=0pt]
	\item $d=1, D=1.5$, $\text{Perim}(\omega) \in \{2\pi,4\cdot (2\pi),10\cdot (2\pi)\}$: the numerical solution is close to a disk;
	\item $d=1, D=3$: in this case the disk of radius $R^*=6$ maximizes the speed among disks. Denoting $P^* := 2\pi R^* = 12\pi$, we have the following: the obtained numerical solution  is the disk for $\text{Perim}(\omega) \in \{2\pi,10\pi,P^*-1,P^*-0.1,P^*\}$. However, for $\text{Perim}(\omega)>P^*$, the algorithm does not give a definite optimal shape, and the disk does not seem to be optimal. The same type of theoretical justification as in the case of the area constraint applies.\\
\end{itemize}

\begin{figure}
	\centering
	\begin{tabular}{ccccc}
		\includegraphics[width=0.18\textwidth]{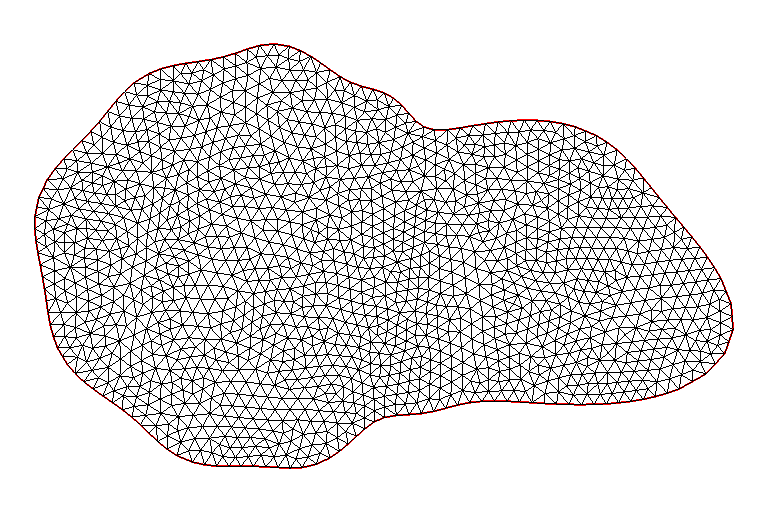} &
		\includegraphics[width=0.18\textwidth]{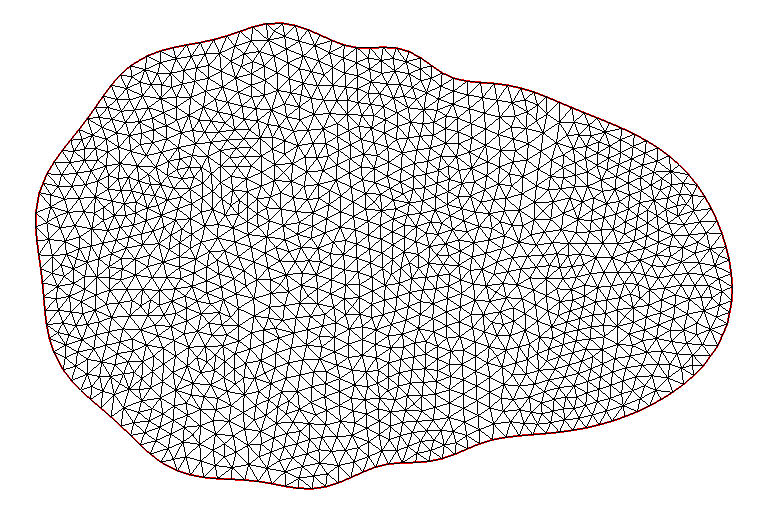}&
		\includegraphics[width=0.18\textwidth]{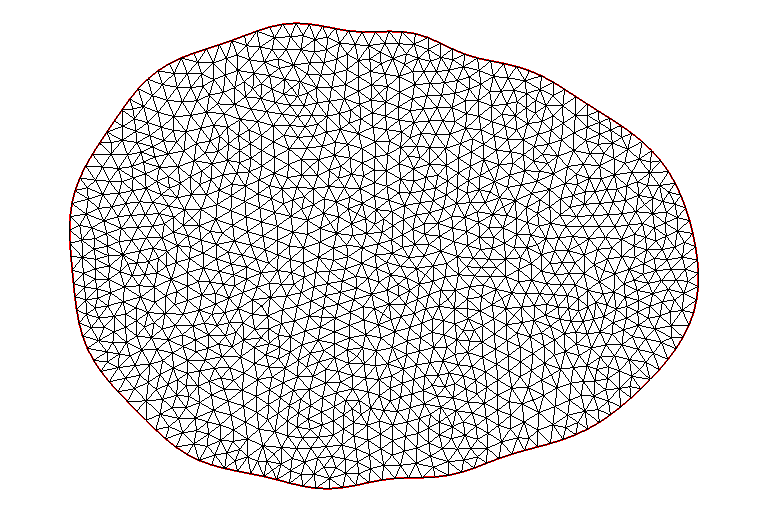}&
		\includegraphics[width=0.18\textwidth]{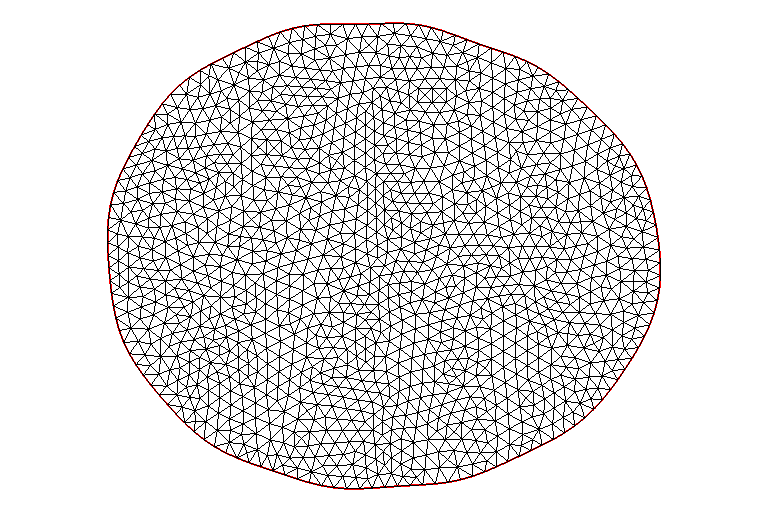}&
		\includegraphics[width=0.18\textwidth]{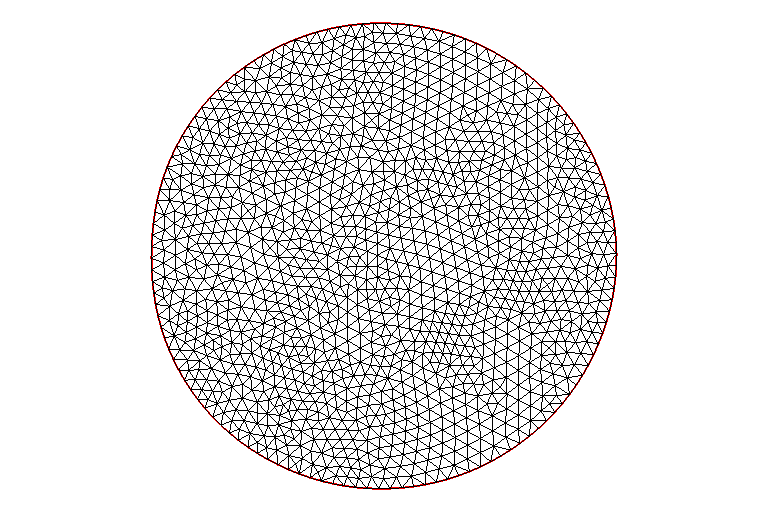}\\
		Iter 1 & Iter 5 & Iter 10 & Iter 20 & Iter 86
	\end{tabular}
	\caption{Evolution of the domain during  the maximization process of the speed under the constraint $\text{Perim}(\omega)=2\pi$, for the choice of the parameters \eqref{eq:param1}, $d=1$ and $D=1.5$.}
	\label{fig:max-perim}
\end{figure}

\begin{figure}
	\centering
	\includegraphics[width=0.5\textwidth]{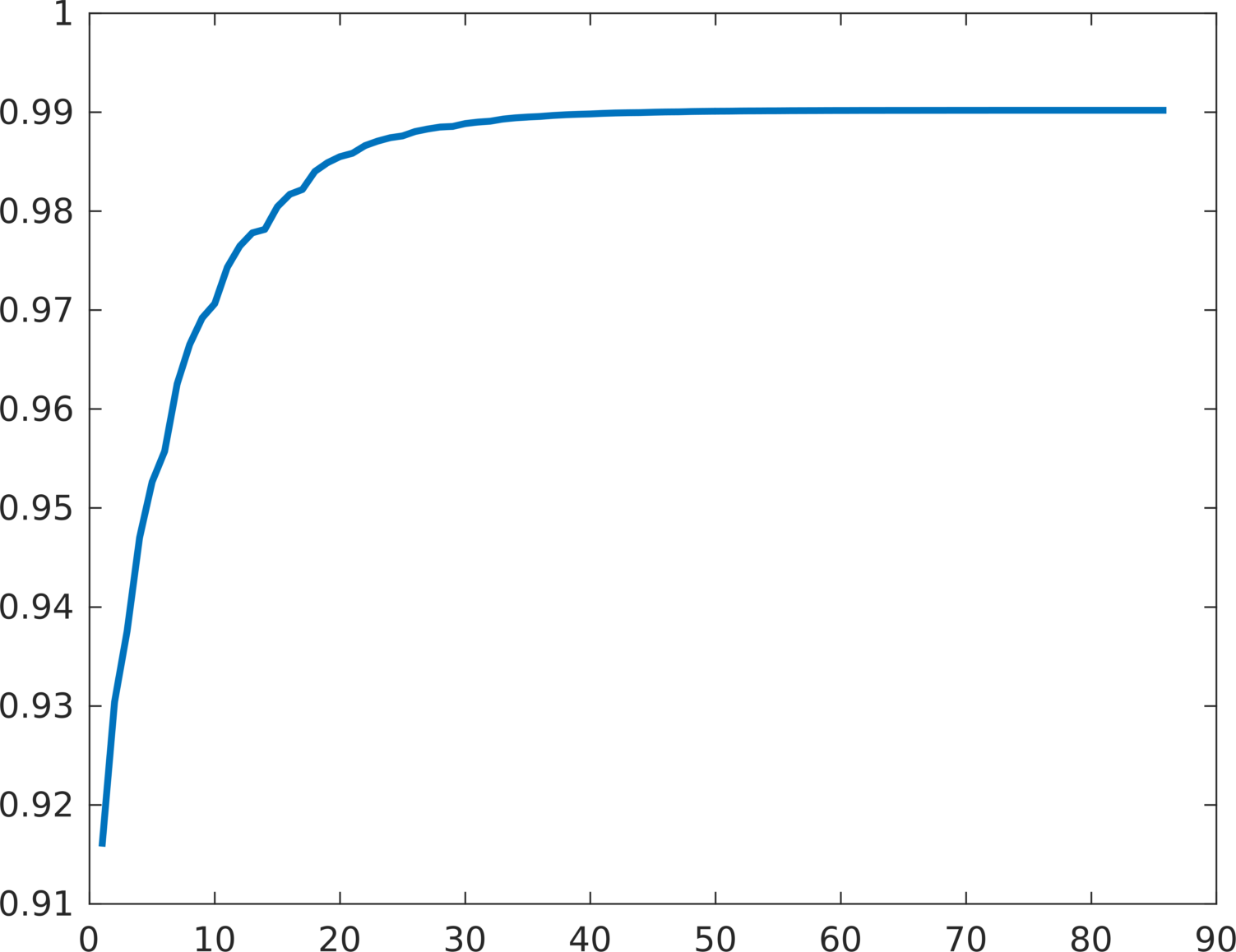}
	\caption{Evolution of the spreading speed during the optimization process: perimeter constraint. The parameters are the same of Figure \ref{fig:max-perim}. We plot the number of iteration on the horizontal axis and the corresponding value of the approximated speed on the vertical one. }
	\label{fig:max-perim-cost}
\end{figure}

Summing up the above findings, we conjecture that the disk is, in some sense, always a maximizer for the speed. However, if the constraint (either on the area or the perimeter) is too large, an optimal (but not admissible) shape would actually be the disconnected union of two disks.

We observe, in practice, that variation of the spreading speed with respect to some shapes appears to be very small, so that it is not clear from numerics whether the maximal speed could also be reached by a connected domain.

Let us now turn to the case of maximization without constraints.\\

{\bf Maximizing the speed in the unconstrained case.} 
The next question concerns the case where the shape is free but unconstrained. In the case when $D \leq 2d$ and the domain~$\omega$ is a disk, as we have recalled above, the spreading speed is increasing with respect to the radius of $\omega$. Moreover, when, e.g., the area is fixed, we have observed that the disk is the optimal shape. One should therefore expect that there is no optimal domain. This is consistent with out numerical simulations, as we observe that the domain grows throughout the optimization process. 

In the case when $D > 2d$, together with our ongoing assumption $f' (0) > 0 = g'(0)$, following the above discussion, it seems reasonable to conjecture that
$$\max_{\omega} c^* (\omega) =c_M= c^* ( R^*),$$
where the maximum is taken over the set of all smooth and connected domains, and the right-hand side refers to the speed when $\omega$ is a disk with optimal radius $R^*$ (see \eqref{eq:R^*_RTV}).

Our numerical observations are consistent with this conjecture. First, we find that, when the optimization process starts from a domain which is roughly smaller than the optimal disk (e.g., which is included in a disk of radius less than $R^*=6$), then the optimization algorithm converges to the optimal disk. On the other hand, if we start from a domain which is too large, then the algorithm often fails. We believe that the domain still approaches the optimal disk but does so by losing its smoothness and connectedness. In all cases, we find that the computed spreading speed never exceeds that of the optimal disk, which further supports our conjecture.

\subsubsection{The homogeneous case with reaction on both the bulk and surface}

We again restrict ourselves to the case of homogeneous coefficients and assume that $\kappa (\cdot) \equiv 1=\mu=\nu$. Now we allow not only the diffusion rates $d$ and $D$ to vary, but also the reaction terms derivatives $f'(0)$ and $g'(0)$. From the previous section, one may expect that the disks play a fundamental role in the issue of shape optimization, both with or without constraints.

We therefore start with an analytical study of the monotonicity of the speed $c^*(R)$, when the domain $\omega$ is a disk (or, in higher dimension, a ball), with respect to the radius $R$. By generalizing the methods of~\cite{RTV}, is is possible to show that there are four different regimes for the behaviour of $c^*(R)$, as illustrated by Figure~\ref{fig:regimes_cyl}. Indeed, denoting $x = \frac{D}{d}>0$ and $y= \frac{g'(0)}{f'(0)}\geq 0$, we have:
\begin{enumerate}
	\item in the (red) region $y\leq 2-x$, $(x,y)\neq(1,1)$, $c^*(R)$ is increasing;
	\item in the (blue) region $x>1/2$, $y\geq x/(2x-1)$, $(x,y)\neq(1,1)$,  $c^*(R)$ is decreasing;
	\item when $(x,y)=(1,1)$, corresponding to the black dot, the speed is equal to $2\sqrt{df'(0)}=2\sqrt{Dg'(0)}$ for all $R >0$;
	\item finally, in the remaining (orange) region, if we define
	\begin{equation*}
	c_M :=\frac{|D f'(0)-d g'(0)|}{\sqrt{(D-d)(f'(0)-g'(0))}}, \qquad
	R^*:= N \frac{\nu}{\mu} \frac{\sqrt{c_M^2-4 D g'(0)}}{\sqrt{c_M^2-4 d f'(0)}},
	\end{equation*}
	(recall that in our numerical simulations $N=2$), then $c^*(R)$ is increasing for $R<R^*$, decreasing for $R>R^*$ and attains its maximum value $c_M$ at $R=R^*$. 
\end{enumerate}
\begin{figure}[ht]
	\centering
	\begin{overpic}[width=0.45\textwidth]{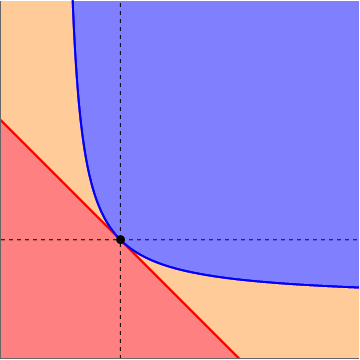}
		\put (101.5,-2) {$\frac{D}{d}$}
		\put (-13,98) {$\frac{g'(0)}{f'(0)}$}
		\put (-4,-5.5) {$0$}
		\put (31.5,-5.5) {$1$}
		\put (-4,31.5) {$1$}
	\end{overpic}
	\vspace{0.2cm}
	\caption{Regimes for the monotonicity of $c^*(R)$ in a cylinder with spherical section.} 
	\label{fig:regimes_cyl}
\end{figure}
Observe that, for $g'(0) = 0$, i.e., $y=0$, one recovers the quantities introduced in \eqref{eq:R^*_RTV}, as well as the results of~\cite{RTV} with the two regimes depending on whether $D \leq 2d$ or $D >2d$. The numerical observations made in the previous section actually extend to the whole red and orange regions, regardless of whether $g'(0)$ is zero or positive. More precisely, for any set of parameters in the red region, the disk maximizes the speed under either perimeter or area constraints. However, there seems to be no optimal shape in the unconstrained maximizing problem, due to the fact that the supremum of the speed is only approached when the size of the domain goes to infinity. On the other hand, in the orange region, the disk maximizes the speed under either perimeter or area constraints only if the constraint is `below' the one of the optimal radius $R^*$ (i.e., $\text{Perim}(\omega) \leq 2 \pi R^*$ or $\text{Area}(\omega)\leq \pi (R^*)^2$); moreover, it appears that the disk with radius $R^*$ maximizes the speed among the whole class of smooth and bounded domains.

However, if $g' (0) > f'(0)/2$, a third regime appears in the case when $y \geq x / (2x-1)$, which is equivalent to
$$d \leq d^* := D\left(  2 - \frac{f'(0)}{g' (0)} \right).$$
Notice the striking similarity between the definition of $d^*$ and the diffusion threshold~\eqref{BRR_threshold} from~\cite{BRR_influence_of_a_line}; the difference is that the roles of the surface parameters $\left(D,g'(0)\right)$ and bulk parameters $(d,f'(0))$ are reversed. It is then not so surprising that, when $d \leq d^*$, the bulk has a purely negative effect on the propagation: in this regime the speed is always less than $\lim_{R \to 0} c^* (R)$ which, as shown in Theorem~\ref{th:R0}, is $2 \sqrt{D g'(0)}$ the speed associated with the surface equation when exchange terms are removed.
\begin{figure}
	\centering
	\begin{tabular}{ccccc}
		\includegraphics[width=0.18\textwidth]{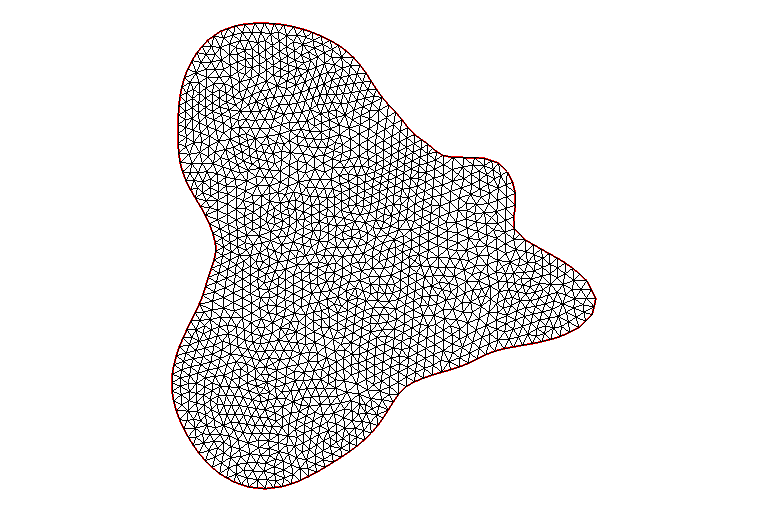} &
		\includegraphics[width=0.18\textwidth]{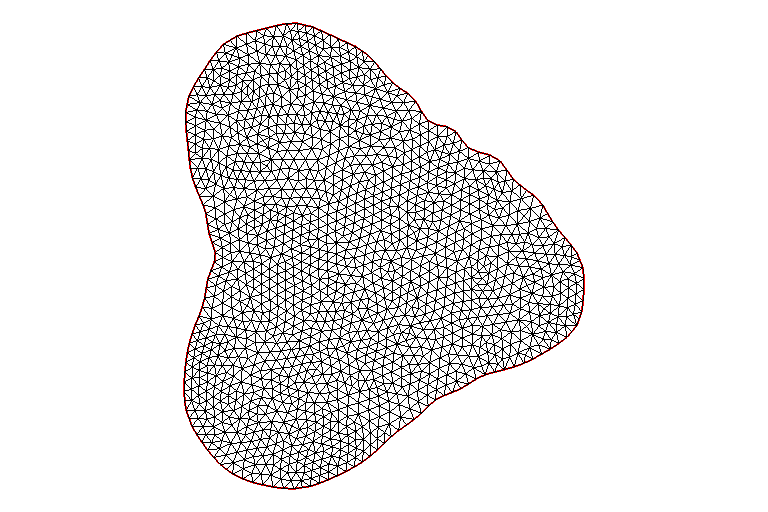}&
		\includegraphics[width=0.18\textwidth]{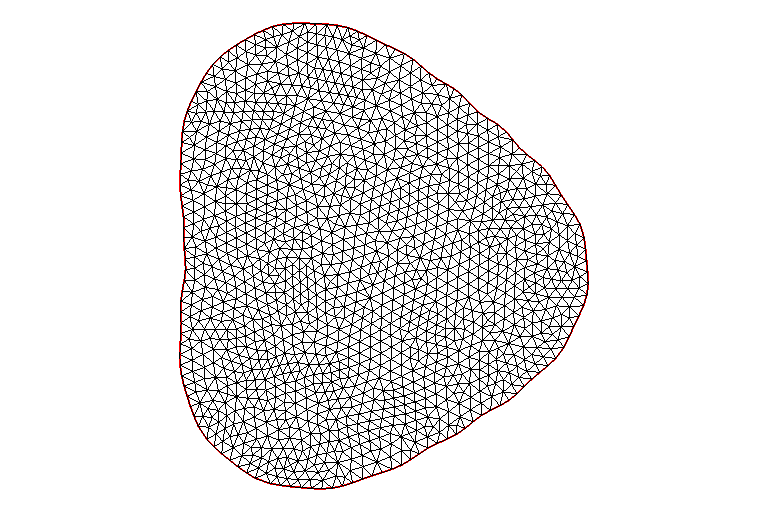}&
		\includegraphics[width=0.18\textwidth]{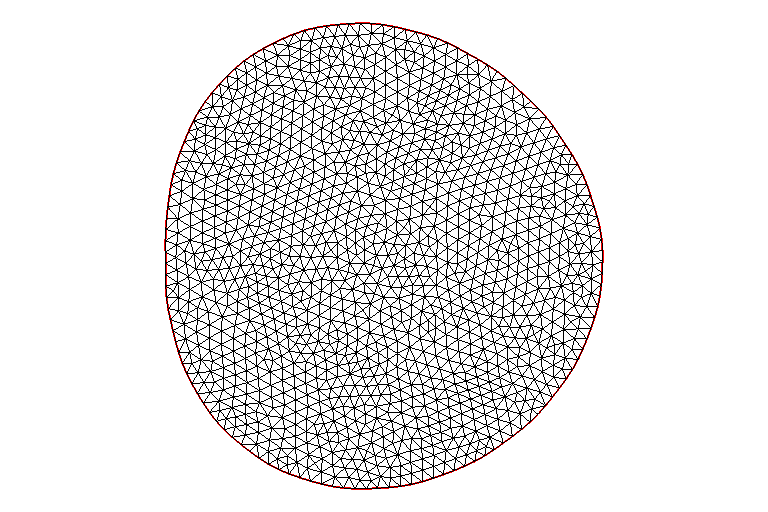}&
		\includegraphics[width=0.18\textwidth]{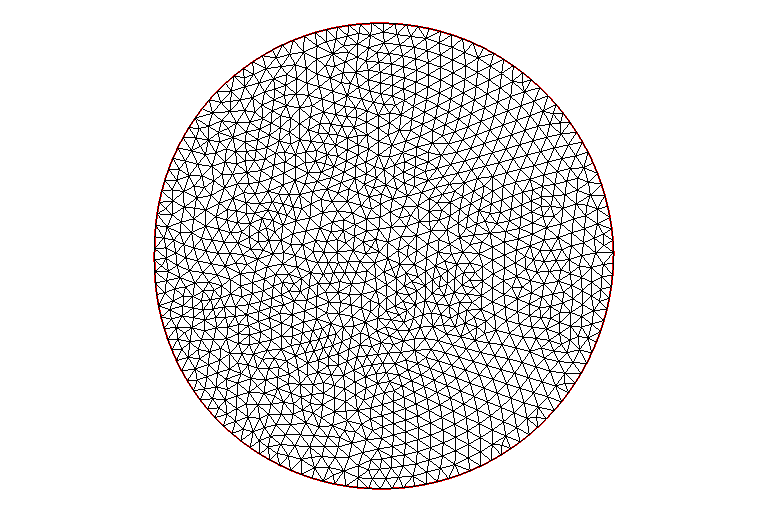}\\
		Iter 1 & Iter 5 & Iter 20 & Iter 60 & Iter 183
	\end{tabular}
	\caption{Evolution of the domain during  the minimization of the speed under the constraint $\text{Area}(\omega)=\pi$ for parameters $f'(0)=1$, $g'(0)=1.5$, $D=1.5$, $d=1$, $\kappa(\cdot)\equiv 1=\mu=\nu$.}
	\label{fig:min-area}
\end{figure}
\begin{figure}
	\centering
	\includegraphics[width=0.5\textwidth]{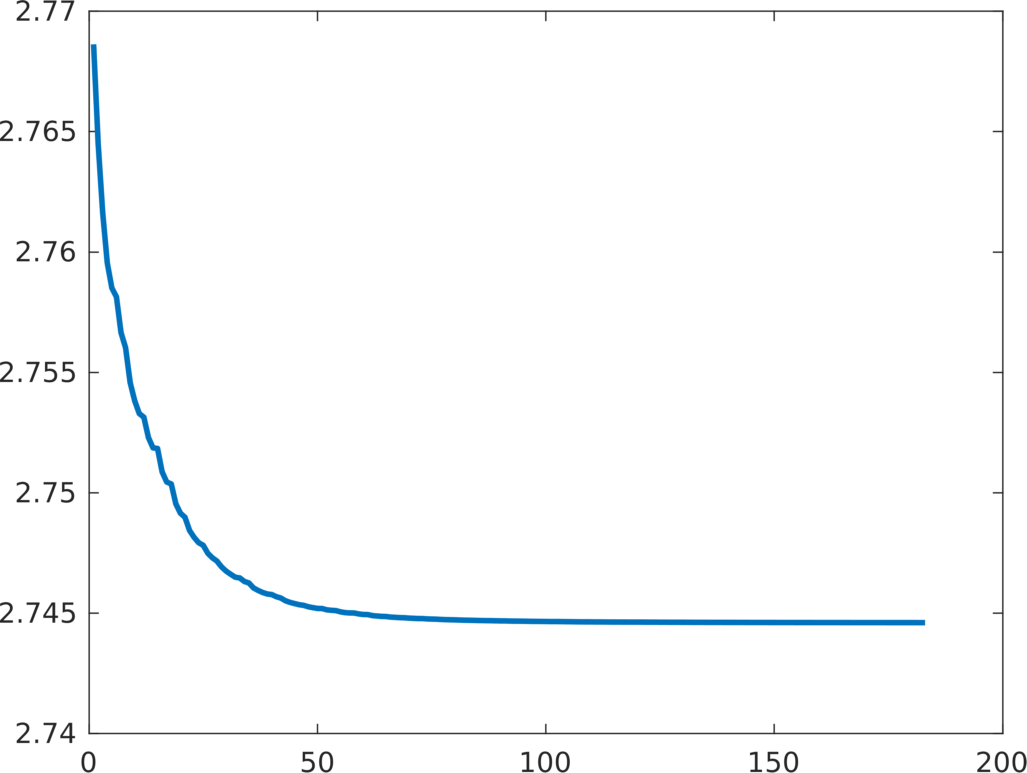}
	\caption{Evolution of the spreading speed during the minimization process illustrated in Figure~\ref{fig:min-area}. We plot the number of iteration on the horizontal axis and the corresponding value of the approximated speed on the vertical one.}
	\label{fig:min-area-cost}
\end{figure}

This new regime naturally leads to different observations on shape optimization issues, which are symmetrical to our observations in the case when $f'(0) > 0 = g'(0)$ and $D \leq 2d$. First of all, in the unconstrained case, we expect that there is no maximizing nor minimizing shape for the spreading speed. Indeed, again by generalizing the arguments of \cite{RTV}, it is possible to show that, in this regime,
$$\lim_{R \to +\infty} c^* (R) < \lim_{R \to 0} c^* (R),$$
and we conjecture that
$$\lim_{R \to +\infty} c^* (R) = \inf_\omega c^* (\omega) < \sup_\omega c^* (\omega) = \lim_{R \to 0} c^* (R),$$
where both optima are taken over the set of all smooth and connected domains, but none of them is reached. Though our numerical algorithm does not converge, this conjecture is supported by numerical observations since, in the maximizing case the domain tends to break up, while in the minimizing case it unlimitedly grows before our algorithm deteriorates.

For a similar reason, there is no maximizing shape in the constrained (area or perimeter) case either. The disk actually becomes, in this new regime, the minimizing shape for the spreading speed. In Figures \ref{fig:min-area} and \ref{fig:min-area-cost} the minimization of the spreading speed is illustrated for parameters $f'(0)=1,g'(0)=1.5,D=1.5,d=1$ and constraint $\text{Area}(\omega) = \pi$. It can be seen that the obtained numerical optimal shape resembles a disk. The same behaviour is observed for the perimeter constraint.

\subsubsection{The heterogeneous case}

We conclude this section by presenting the results of some numerics in the case of heterogeneous coefficients. Indeed, we have seen that the shape derivative formula established in Section \ref{sec:computing_shape_deriv} is also valid when keeping the dependence of $\f$ in $y$. We give below some simple optimization examples in this case. For simplicity we choose $g'(0)=0$ and $\kappa(\cdot)\equiv 1$. Note that, when $\f$ depends on $y$, the spreading speed is no longer invariant under rigid motions. It is also observed numerically that, during the maximization of the spreading speed, the domain $\omega$ tends to move towards regions where $\f$ attains its maximum. In order to be able to use the parametrization with respect to the origin, we have chosen $\f$ so that the origin belongs to the region that contains the maximal values of~$\f$.

\begin{figure}[h]
	\centering
	\begin{tabular}{cccc}
      \includegraphics[height=0.15\textwidth]{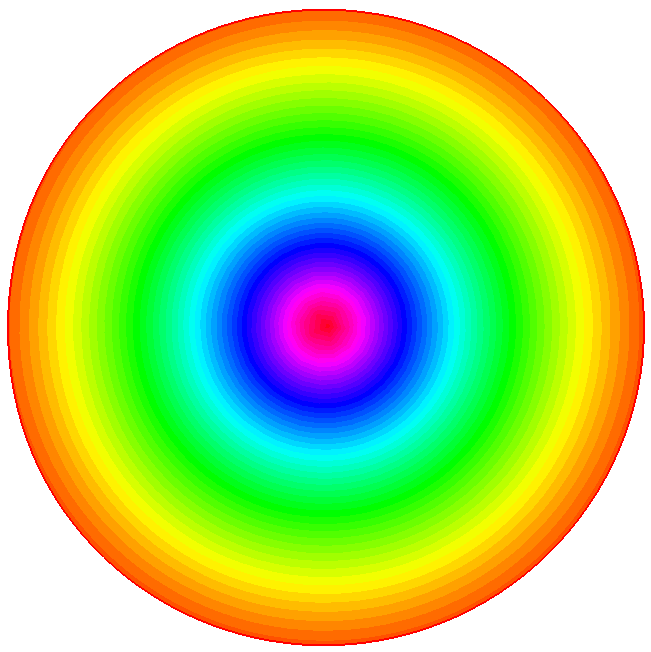}&
      \includegraphics[height=0.15\textwidth]{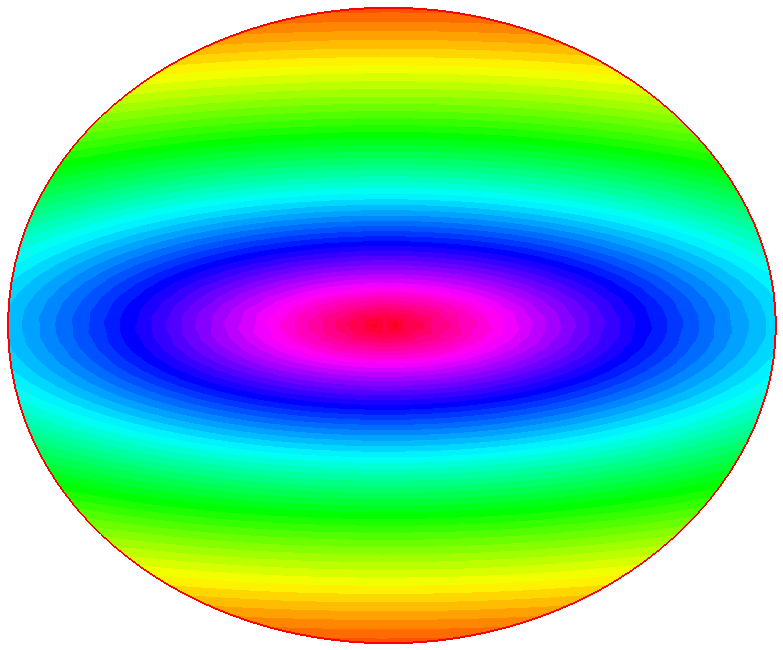}&
      \includegraphics[height=0.15\textwidth]{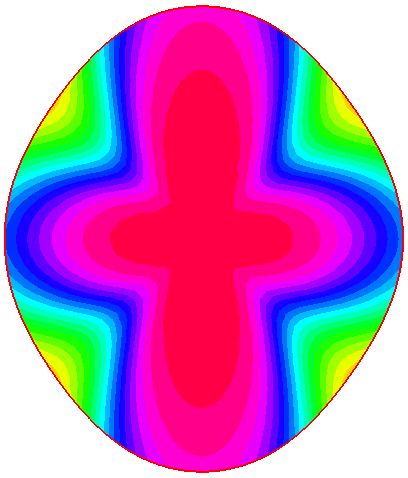}&
      \includegraphics[height=0.15\textwidth]{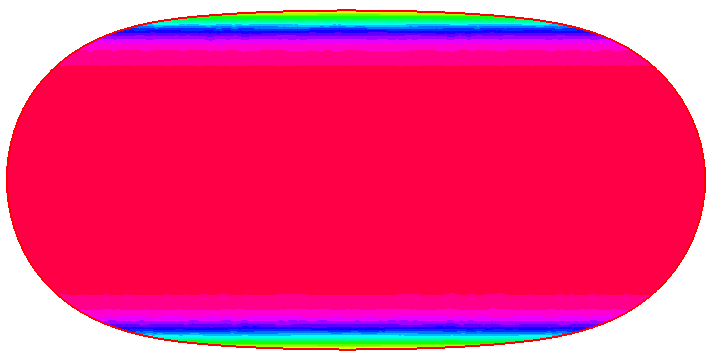}\\
      Case 1 & Case 2& Case 3 & Case 4
	\end{tabular}
	\caption{Shapes that maximize the spreading speed in the heterogeneous case for various choices of $\f$, under constraint $\text{Area}(\omega)=\pi$. The value of $\f$ is also represented, the color orange representing the lowest values and the color red the highest ones. The other parameters are as in Figure \ref{fig:max-area-cost}.}
	\label{fig:heterogeneous}
\end{figure}
As one may expect, in this heterogeneous case, the disk may no longer be optimal. Some examples, together with the associated choice of $\f$, are shown in Figure \ref{fig:heterogeneous}. The choices of $\f$ are the following: 
\begin{enumerate}
	\item $\f = 1+\exp\left(-|y|^2\right)$;
	\item $\f = 1+\exp\left(-\sqrt{0.1y_1^2+y_2^2}\right)$;
	\item $\f = \displaystyle 1+0.5\tanh\left(\frac{1+0.3\cos(4\vartheta)}{0.1+0.7\sqrt{y_1^2+0.3y_2^2}}-1\right)$, where $\vartheta$ is the angle made with the $Oy_1$ axis in polar coordinates, taken from $-\pi$ to $\pi$;
	\item $\f = 1+0.5\tanh\left(-10(y_2^2-0.4)\right)$.
\end{enumerate}
The maximization is always made using an area constraint $\text{Area}(\omega)=\pi$, with the same algorithm as in the homogeneous case. It can be seen that the optimal domain found numerically is located around the regions where $\f$ is highest. As expected, we observe that, when $\f$ is not radial, the optimal domain is no longer the disk.

\begin{center}
	\textsc{Acknowledgments}
\end{center}

B.B. was partially supported by the project ANR-18-CE40-0013 SHAPO financed by the French Agence Nationale de la
Recherche (ANR). T.G. was partially supported by the project ANR-14-CE25-0013 NONLOCAL financed by the French Agence National de la Recherche (ANR). A.T. was partially supported by: the European Research
Council, under the European Union's Seventh Framework Programme (FP/2007-2013) /
ERC Grant Agreement number 321186 - ReaDi \lq\lq Reaction-Diffusion E\-qua\-tions, Propagation and Modelling" led by Henri Berestycki; the Institute of Complex Systems of Paris \^Ile-de-France, under DIM 2014 \lq\lq Diffusion heterogeneities in different spatial dimensions and applications to ecology and medicine";
the Ministry of Science, Innovation and Universities of the Government of Spain through projects PGC2018-097104-B-100 and contract Juan de la Cierva Incorporaci\'on IJCI-2015-25084.

\bibliographystyle{siam}
\bibliography{biblio}

\begin{thebibliography}{10}

\bibitem{allaireCO}
{\sc G.~Allaire}, {\em Conception optimale de structures}, vol.~58 of
  Math\'ematiques \& Applications (Berlin) [Mathematics \& Applications],
  Springer-Verlag, Berlin, 2007.

\bibitem{AW78}
{\sc D.~Aronson and H.~Weinberger}, {\em Multidimensional nonlinear diffusion
  arising in population genetics}, Adv. in Math., 30 (1978), pp.~33--76.

\bibitem{BCRR}
{\sc H.~Berestycki, A.-C. Coulon, J.-M. Roquejoffre, and L.~Rossi}, {\em The
  effect of a line with nonlocal diffusion on {F}isher-{KPP} propagation},
  Math. Models Methods Appl. Sci., 25 (2015), pp.~2519--2562.

\bibitem{BDR19}
{\sc H.~Berestycki, R.~Ducasse, and L.~Rossi}, {\em Generalized principal
  eigenvalues for heterogeneous road–field systems}, Commun. Contemp. Math.,
  (2019), pp.~1950013, 35.

\bibitem{BDRpreprint}
\leavevmode\vrule height 2pt depth -1.6pt width 23pt, {\em Influence of a road
  on a population in an ecological niche facing climate change}, J. Math.
  Biol., 81 (2020), pp.~1059--1097.

\bibitem{BerestyckiNirenberg-TWcylinders}
{\sc H.~Berestycki and L.~Nirenberg}, {\em Travelling fronts in cylinders},
  Ann. Inst. H. Poincar{\'e} Anal. Non Lin{\'e}aire, 9 (1992), pp.~497--572.

\bibitem{BRR_plus}
{\sc H.~{Berestycki}, J.-M. {Roquejoffre}, and L.~{Rossi}}, {\em {Fisher-{KPP}
  propagation in the presence of a line: further effects}}, Nonlinearity, 26
  (2013), pp.~2623--2640.

\bibitem{BRR_influence_of_a_line}
\leavevmode\vrule height 2pt depth -1.6pt width 23pt, {\em {The influence of a
  line with fast diffusion on Fisher-KPP propagation}}, Journal of Mathematical
  Biology, 66 (2013), pp.~743--766.

\bibitem{BRR_shape_of_expansion}
{\sc H.~Berestycki, J.-M. Roquejoffre, and L.~Rossi}, {\em The shape of
  expansion induced by a line with fast diffusion in {F}isher-{KPP} equations},
  Comm. Math. Phys., 343 (2016), pp.~207--232.

\bibitem{BRR_road_fields_travelling_waves}
\leavevmode\vrule height 2pt depth -1.6pt width 23pt, {\em Travelling waves,
  spreading and extinction for {F}isher-{KPP} propagation driven by a line with
  fast diffusion}, Nonlinear Anal., 137 (2016), pp.~171--189.

\bibitem{BRR_SIR_preprint}
{\sc H.~{Berestycki}, J.-M. {Roquejoffre}, and L.~{Rossi}}, {\em Propagation of
  epidemics along lines with fast diffusion}, Bull. Math. Biol., 83 (2021),
  pp.~Paper No. 2, 34.

\bibitem{BRTpreprint}
{\sc H.~Berestycki, L.~Rossi, and A.~Tellini}, {\em Coupled reaction-diffusion
  equations on adjacent domains}.
\newblock arXiv:1903.11717.

\bibitem{Cea86}
{\sc J.~C\'ea}, {\em Conception optimale ou identification de formes: calcul
  rapide de la d\'eriv\'ee directionnelle de la fonction co\^ut}, RAIRO
  Mod\'el. Math. Anal. Num\'er., 20 (1986), pp.~371--402.

\bibitem{delfour-zolesio}
{\sc M.~C. Delfour and J.-P. Zol\'{e}sio}, {\em Shapes and geometries}, vol.~22
  of Advances in Design and Control, Society for Industrial and Applied
  Mathematics (SIAM), Philadelphia, PA, second~ed., 2011.
\newblock Metrics, analysis, differential calculus, and optimization.

\bibitem{Dietrich1}
{\sc L.~Dietrich}, {\em Existence of travelling waves for a reaction--diffusion
  system with a line of fast diffusion}, Appl. Math. Res. Express. AMRX,
  (2015), pp.~204--252.

\bibitem{Dietrich2}
\leavevmode\vrule height 2pt depth -1.6pt width 23pt, {\em Velocity enhancement
  of reaction-diffusion fronts by a line of fast diffusion}, Trans. Amer. Math.
  Soc., 369 (2017), pp.~3221--3252.

\bibitem{DiRo}
{\sc L.~Dietrich and J.-M. Roquejoffre}, {\em Front propagation directed by a
  line of fast diffusion: large diffusion and large time asymptotics}, J.
  \'{E}c. polytech. Math., 4 (2017), pp.~141--176.

\bibitem{Evans-pde}
{\sc L.~C. Evans}, {\em Partial differential equations}, vol.~19 of Graduate
  Studies in Mathematics, American Mathematical Society, Providence, RI,
  second~ed., 2010.

\bibitem{FLT}
{\sc K.~Fellner, E.~Latos, and B.~Q. Tang}, {\em Well-posedness and exponential
  equilibration of a volume-surface reaction-diffusion system with nonlinear
  boundary coupling}, Ann. Inst. H. Poincar\'{e} Anal. Non Lin\'{e}aire, 35
  (2018), pp.~643--673.

\bibitem{GMZ}
{\sc T.~Giletti, L.~Monsaingeon, and M.~Zhou}, {\em A {KPP} road-field system
  with spatially periodic exchange terms}, Nonlinear Anal., 128 (2015),
  pp.~273--302.

\bibitem{freefem}
{\sc F.~Hecht}, {\em {N}ew development in {F}ree{F}em++}, J. Numer. Math., 20
  (2012), pp.~251--265.

\bibitem{henrot-pierre-english}
{\sc A.~Henrot and M.~Pierre}, {\em Shape variation and optimization}, vol.~28
  of EMS Tracts in Mathematics, European Mathematical Society (EMS),
  Z\"{u}rich, 2018.
\newblock A geometrical analysis, English version of the French publication [
  MR2512810] with additions and updates.

\bibitem{kato-perturbation}
{\sc T.~Kato}, {\em Perturbation theory for linear operators}, Springer-Verlag,
  Berlin, 1995.
\newblock Reprint of the 1980 edition.

\bibitem{MS2018}
{\sc Y.~Morita and K.~Sakamoto}, {\em A diffusion model for cell polarization
  with interactions on the membrane}, Jpn. J. Ind. Appl. Math., 35 (2018),
  pp.~261--276.

\bibitem{Pauthier}
{\sc A.~Pauthier}, {\em The influence of nonlocal exchange terms on
  {F}isher-{KPP} propagation driven by a line of fast diffusion}, Commun. Math.
  Sci., 14 (2016), pp.~535--570.

\bibitem{RR2014}
{\sc A.~R\"{a}tz and M.~R\"{o}ger}, {\em Symmetry breaking in a bulk-surface
  reaction-diffusion model for signalling networks}, Nonlinearity, 27 (2014),
  pp.~1805--1827.

\bibitem{T16}
{\sc A.~Tellini}, {\em Propagation speed in a strip bounded by a line with
  different diffusion}, J. Differential Equations, 260 (2016), pp.~5956--5986.

\bibitem{T19}
\leavevmode\vrule height 2pt depth -1.6pt width 23pt, {\em Comparison among
  several planar {F}isher-{KPP} road-field systems}, in Contemporary research
  in elliptic {PDE}s and related topics, vol.~33 of Springer INdAM Ser.,
  Springer, Cham, 2019, pp.~481--500.

\bibitem{RTV}
{\sc A.~Tellini, L.~Rossi, and E.~Valdinoci}, {\em The effect on {F}isher-{KPP}
  propagation in a cylinder with fast diffusion on the boundary}, SIAM Journal
  on Mathematical Analysis, 49 (2017), pp.~4595--4624.

\end{thebibliography}

\end{document}